\date{}
\newlength{\defbaselineskip}
\newcommand{\setlinespacing}[1]%
           {\setlength{\baselineskip}{#1 \defbaselineskip}}
\newcommand{\N}{{\mathbb{N}}}
\newcommand{\actaqed}{\hfill $\actabox$}
{\medskip\noindent \textit{Proof of #1. }}%
{\actaqed \medskip}
\def\D{{\mathcal D}}
\def\cA{{\mathcal A}}
\def\C{{\mathcal C}}
\def\cC{{\mathcal C}}
\def\cZ{\mathcal{Z}}
\def \Tr{\mathcal T}
\def \cR{\mathcal R}
\def \cX{\mathcal X}
\def\R{{\mathbb R}}
\def\Z{\mathbb Z}
\def \T{\mathbb T}
\def\bbC{\mathbb C}
\def \<{\langle}
\def\>{\rangle}
\def \Og{\Omega}
\def \e{\varepsilon}
\def \ta{\theta}
\def \ff{\varphi}
\def\al{\alpha}
\def \ro{\varrho}
\def \sp{\operatorname{span}}
\def\bx{\mathbf x}
\def\by{\mathbf y}
\def\bk{\mathbf k}
\def\bw{\mathbf w}
\def\bs{\mathbf s}
\def\bX{\mathbf X}
\def\bW{\mathbf W}
\def\bF{\mathbf F}
\newtheorem{Theorem}{Theorem}[section]
\newtheorem{Lemma}{Lemma}[section]
\newtheorem{Definition}{Definition}[section]
\newtheorem{Proposition}{Proposition}[section]
\newtheorem{Remark}{Remark}[section]
\newtheorem{Corollary}{Corollary}[section]
\numberwithin{equation}{section}
\newcommand{\be}{\begin{equation}}
\newcommand{\ee}{\end{equation}}
\def\ta{\theta}
\def\al{{\alpha}}
\def\da{{\delta}}
\def\ld{{\lambda}}
\def\Bl{\Bigl}
\def\Br{\Bigr}
\def\f{\frac}
\def\vi{\varphi}
\def\k{{\kappa}}
\def\CB{{\mathcal B}}
\def\CC{{\mathbb C}}
\def\NN{{\mathbb N}}
\def\RR{{\mathbb R}}
\def\dist{\operatorname{dist}}
\def\Og{\Omega}
\def\Ld{\Lambda}
\def\sa{\sigma}
\def\cA{\mathcal{A}}
\def\cX{\mathcal{X}}
\def\bx{\mathbf{x}}
\def\by{\mathbf{y}}
\def\bw{\mathbf{w}}
\def\bx{\mathbf{x}}
\DeclareSymbolFont{fouriersymbols}{FMS}{futm}{m}{n}
\DeclareSymbolFont{fourierlargesymbols}{FMX}{futm}{m}{n}
\DeclareMathDelimiter{\VT}{\mathord}{fouriersymbols}{152}{fourierlargesymbols}{147}
\begin{document}

\title{Universal discretization and sparse sampling recovery}

\author{ F. Dai and   V. Temlyakov 	\footnote{
		The first named author's research was partially supported by NSERC of Canada Discovery Grant
		RGPIN-2020-03909.
		The second named author's research was supported by the Russian Science Foundation (project No. 23-71-30001)
at the Lomonosov Moscow State University.
  }}

\newcommand{\Addresses}{{
  \bigskip
  \footnotesize

  F.~Dai, \textsc{ Department of Mathematical and Statistical Sciences\\
University of Alberta\\ Edmonton, Alberta T6G 2G1, Canada\\
E-mail:} \texttt{fdai@ualberta.ca }

 \medskip
  V.N. Temlyakov, \textsc{University of South Carolina,\\ Steklov Institute of Mathematics,\\  Lomonosov Moscow State University,\\ and Moscow Center for Fundamental and Applied Mathematics.
  \\
E-mail:} \texttt{temlyak@math.sc.edu}

}}
\maketitle

\begin{abstract}
	  Recently, it was discovered that for a given function class $\bF$ the error of best linear recovery in the square norm can be bounded above by the Kolmogorov width of $\bF$ in the uniform norm. That analysis is based on deep results in discretization of the square norm of
functions from finite-dimensional subspaces. In this paper we show how very recent results on universal discretization of the square norm of functions from a collection of 
  finite-dimensional subspaces lead to an inequality (Lebesgue-type inequality) between the error of sparse recovery  in the square norm provided by the algorithm based on least squares operator and best sparse approximations in the uniform norm with respect to appropriate dictionaries. 
 
\end{abstract}

{\it Keywords and phrases}: Sampling discretization, universality, recovery.

{\it MSC classification 2000:} Primary 65J05; Secondary 42A05, 65D30, 41A63.

\section{Introduction}
\label{I}

We discuss sampling recovery, when the error is measured in the $L_2$ norm. It is a very actively developing area of research. Recently an outstanding progress has been done in the sampling recovery in the $L_2$ norm (see, for instance, \cite{CM}, \cite{KU}, \cite{KU2}, \cite{NSU}, \cite{CoDo}, \cite{KUV}, \cite{TU1}, \cite{LimT}, \cite{JUV}, \cite{DKU}). In this paper we only discuss those known results, which are directly related to our new results. 
It is known that 
in this case (recovery in the $L_2$ norm) different versions of the classical least squares algorithms are very useful. In this paper 
we study the power of the classical least squares algorithms in the nonlinear (sparse) sample recovery setting.  This paper was motivated by the very recent paper by T. Jahn, T. Ullrich, and F. Voigtlaender \cite{JUV}. The authors of \cite{JUV} showed how some methods from compressed sensing 
allow us to prove an inequality between the error of optimal recovery and the error of sparse 
approximation with respect to a special system (for instance, the trigonometric system). 
The main point of this paper is to demonstrate that the recent results on discretization of the $L_2$ norm, namely, results on universal discretization allow us to prove similar inequalities for sparse recovery. We stress that in this paper we do not use any methods from compressed sensing. Instead we use recent results on universal discretization. 
The reader can find a survey of results on discretization in \cite{KKLT}. 

We now explain the setting in precise mathematical terms. Let $\Omega$ be a compact subset of $\R^d$ with the probability measure $\mu$. By the $L_p$ norm, $1\le p< \infty$, of the complex valued function defined on $\Omega$,  we understand
$$
\|f\|_p:=\|f\|_{L_p(\Omega,\mu)} := \left(\int_\Omega |f|^pd\mu\right)^{1/p}.
$$
By the $L_\infty$ norm we understand the uniform norm of continuous functions
$$
\|f\|_\infty := \max_{\bx\in\Omega} |f(\bx)|
$$
and with a little abuse of notations we sometimes write $L_\infty(\Omega)$ for the space $\C(\Omega)$ of continuous functions on $\Omega$.

 The concept of sparse approximation plays a fundamental role in our approach.  We begin our discussion with the concept of $v$-term approximation with respect to a given dictionary (system) $\D_N=\{g_i\}_{i=1}^N$. 
Given an integer $1\leq v\leq N$, we denote by $\mathcal{X}_v(\D_N)$ the collection of all linear spaces spanned by $g_j$, $j\in J$  with $J\subset [1,N]\cap \N$ and $|J|=v$, and 
denote by $\Sigma_v(\D_N)$  the set of all $v$-term approximants with respect to $\D_N$:
\begin{align*}
\Sigma_v(\D_N):&= \bigcup_{V\in\cX_v(\D_N)} V.
\end{align*}
We then define for a Banach space $X$
$$
\sigma_v(f,\D_N)_X := \inf_{g\in\Sigma_v(\D_N)}\|f-g\|_X
$$
to be the best $v$-term approximation of  $f\in X$   in the $X$-norm  with respect to $\D_N$,   and define 
$$
 \sigma_v(\bF,\D_N)_X := \sup_{f\in\bF} \sigma_v(f,\D_N)_X
 $$
  for a function class $\bF\subset X$. 
  
  For instance, in the case $X=L_2$ the best $v$-term approximation of $f$ with respect to 
  $\D_N$ can be realised by the following algorithm. Denote by $P_L$ the orthogonal $L_2$ 
  projection onto the subspace $L\subset L_2$ and define the algorithm
  $$
  L(f,v) :=  \text{arg}\min_{L\in \cX_v(\D_N)}\|f-P_L(f)\|_2,
$$
\be\label{I4a}
  P(\D_n,v)(f) := P_{L(f,v)}(f).
\ee
In this paper we study a similar algorithm with the operator $P_L$ replaced by the least squares operator, which we define momentarily. For a fixed set of points  $\xi:=\{\xi^j\}_{j=1}^m\subset \Omega$ define the sample vector
$$
S(f,\xi) := (f(\xi^1),\dots,f(\xi^m)) \in \bbC^m.
$$
Let $X_N$ be an $N$-dimensional subspace of the space of continuous functions $\C(\Omega)$ and let $\bw:=(w_1,\dots,w_m)\in \R^m$ be a positive weight, i.e. $w_j>0$, $j=1,\dots,m$. Consider the following classical weighted least squares recovery operator (algorithm) (see, for instance, \cite{CM})
$$
 \ell 2\bw(\xi,X_N)(f):=\text{arg}\min_{u\in X_N} \|S(f-u,\xi)\|_{2,\bw},\quad \xi=\{\xi^j\}_{j=1}^m\subset  \Omega,
$$
where
$$
\|S(g,\xi)\|_{2,\bw}:= \left(\sum_{\nu=1}^m w_\nu |g(\xi^\nu)|^2\right)^{1/2} .
$$
We note that the least squares operator (empirical risk minimization) is a classical algorithm from statistics and learning theory. It is also well known in learning theory that performance of this algorithm can be controlled by asymptotic characteristics measured in the uniform norm. The reader can find this kind of results and a discussion in \cite{VTbook}, Ch.4. 

In this paper we focus mainly on the case of special weights $\bw =\bw_m:= (1/m,\dots,1/m)$.   In this case the algorithm 
$\ell 2\bw_m(\xi,X_N)$ is the classical least squares algorithm. For brevity we use the notation
$$
LS(\xi,X_N) := \ell 2\bw_m(\xi,X_N).
$$
We define a new algorithm, which is a nonlinear algorithm:
$$
L(\xi,f) := \text{arg}\min_{ L\in \cX_v(\D_N)}\|f-LS(\xi,L)(f)\|_2,
$$
\be\label{I4}
  LS(\xi,\cX_v(\D_N))(f):= LS(\xi,L(\xi,f))(f).
\ee
Thus, the only difference between the algorithm $LS(\xi,\cX_v(\D_N))(\cdot)$ and the ideal 
one $P(\D_n,v)(\cdot)$ is that the orthogonal projection operator $P_L$ is replaced by the 
least squares operator $LS(\xi,L)$. In Section \ref{D} we give one more argument, which motivates us to use the least squares operator. 

{\bf Comment \ref{I}.1.} We now make a comment on the algorithm \newline $LS(\xi,\cX_v(\D_N))(\cdot)$ and on some known algorithms. At the step of finding $L(\xi,f)$ the algorithm 
$LS(\xi,\cX_v(\D_N))(\cdot)$ requires in addition to the sample vector $S(f,\xi)$ evaluation of 
the $L_2$ norm of a function. The authors of \cite{JUV} use the algorithm, which requires in addition to the sample vector $S(f,\xi)$ the fact that $f\in \bF$ and the value of the best $v$-term approximation with respect to $\D_N$ of the class $\bF$. Thus, both of the above algorithms require an extra information about $f$ in addition to the sample vector $S(f,\xi)$.
In the paper \cite{DT1} we address this important issue and use the greedy-type algorithm, namely, the Weak Orthogonal Greedy Algorithm, which only uses the sample vector $S(f,\xi)$ without any additional information about $f$. We point out that the algorithm $LS(\xi,\cX_v(\D_N))(\cdot)$ provides a $v$-term approximant with respect to the system $\D_N$.
The algorithm from \cite{JUV} does not provide a sparse approximant. 

We would like to understand how good  the new operator (approximation method) 
$LS(\xi,\cX_v(\D_N))(\cdot)$ is. Different criteria can be used for that. The classical approximation theory approach, which goes back to the concept of width (for more details see Section \ref{D}), suggests to compare accuracy of a given method on a function class 
$\bF$ with the optimal accuracy for the class $\bF$. There is a more delicate criterion, which 
was developed in nonlinear approximation. This criterion is based on the Lebesgue-type 
inequalities for a given method of approximation. It asks for the accuracy inequalities 
for all individual functions not for a given function class. Clearly, the criterion based on the Lebesgue-type inequalities is stronger than the one based on optimality for a function class. 
The reader can find a description of the Lebesgue-type inequalities approach in the book 
\cite{VTbookMA}, Ch.8. In this paper we apply the Lebesgue-type inequalities criterion
(see Theorem \ref{IT2} below) and as a corollary of the corresponding results we obtain 
results for function classes (see Theorem \ref{IT3} below). 

We now proceed to the formulation of our main results. We prove in Section \ref{A} two conditional theorems. We call these theorems {\it conditional} in order to emphasize that  the assumptions of these theorems are not easy to check. The condition on the $\D_N$ is formulated in terms of universal discretization. We now give the corresponding definitions. 

{\bf The  problem of universal discretization.} Let $\cX:= \{X(n)\}_{n=1}^k$ be a collection of finite-dimensional  linear subspaces $X(n)$ of the $L_p(\Omega,\mu)$, $1\le p < \infty$. We say that a set $\xi:= \{\xi^j\}_{j=1}^m \subset \Omega $ provides {\it universal discretization} for the collection $\cX$ if there are two positive constants $C_i$, $i=1,2$, such that for each $n\in\{1,\dots,k\}$ and any $f\in X(n)$ we have
$$
C_1\|f\|_p^p \le \frac{1}{m} \sum_{j=1}^m |f(\xi^j)|^p \le C_2\|f\|_p^p.
$$
 
 \begin{Definition}\label{ID1} We say that a set $\xi:= \{\xi^j\}_{j=1}^m \subset \Omega $ provides {\it one-sided universal discretization} with constant $C_1$ for the collection $\cX:= \{X(n)\}_{n=1}^k$ of finite-dimensional  linear subspaces $X(n)$ if we have
 \be\label{I3}
C_1\|f\|_2^2 \le \frac{1}{m} \sum_{j=1}^m |f(\xi^j)|^2\quad \text{for any}\quad f\in \bigcup_{n=1}^k X(n) .
\ee
We denote by $m(\cX,C_1)$ the minimal $m$ such that there exists a set $\xi$ of $m$ points, which
provides one-sided universal discretization with constant $C_1$ for the collection $\cX$. 
\end{Definition}
 
In this paper we focus on special collections of subspaces generated by a given dictionary:
$\cX = \cX_v(\D_N)$. In addition to a standard space $L_p(\Omega,\mu)$, $1\le p\le \infty$, we also use the following $L_p(\Omega,\mu_\xi)$ space, which is convenient for us.  For  $\xi=\{\xi^1,\ldots, \xi^m\}\subset\Omega$ let  $\mu_\xi$  denote the probability measure  
\[\mu_{\xi}=\frac 12 \mu+\frac1{2m}\sum_{j=1}^m \delta_{\xi^j},\]
where   $\delta_\bx$ denotes the Dirac measure supported at a point $\bx$. 

The following Theorem \ref{IT2} provides the Lebesgue-type inequalities for 
the algorithm $LS(\xi,\cX_v(\D_N))(\cdot)$. 
  
 \begin{Theorem}\label{IT2} Let $m$, $v$, $N$ be given natural numbers such that $v\le N$.  Let $\D_N\subset \C(\Og)$ be  a dictionary of $N$ elements. Assume that  there exists a set $\xi:= \{\xi^j\}_{j=1}^m \subset \Omega $, which provides {\it one-sided universal discretization} with constant $C_1$ for the collection $\cX_v(\D_N)$. Then for   any  function $ f \in \C(\Omega)$ we have
\be\label{I5}
  \|f-LS(\xi,\cX_v(\D_N))(f)\|_2 \le 2^{1/2}(2C_1^{-1} +1) \sigma_v(f,\D_N)_{L_2(\Og, \mu_\xi)}
 \ee
 and
 \be\label{I6}
  \|f-LS(\xi,\cX_v(\D_N))(f)\|_2 \le  (2C_1^{-1} +1) \sigma_v(f,\D_N)_\infty.
 \ee
 \end{Theorem}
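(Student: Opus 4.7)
The plan is to exploit two ingredients: the defining minimality of the algorithm $LS(\xi,\cX_v(\D_N))(f)$ among all subspace-restricted least squares fits, and the one-sided discretization inequality applied inside a fixed $L\in\cX_v(\D_N)$. Fix an arbitrary $g\in\Sigma_v(\D_N)$ and let $L_g\in\cX_v(\D_N)$ be a subspace containing $g$. By the definition \eqref{I4} of $LS(\xi,\cX_v(\D_N))(f)$ as the argmin over $L\in\cX_v(\D_N)$, we have
$$
 \|f-LS(\xi,\cX_v(\D_N))(f)\|_2\le \|f-LS(\xi,L_g)(f)\|_2.
$$
Now I would write the triangle inequality $\|f-LS(\xi,L_g)(f)\|_2\le \|f-g\|_2+\|h\|_2$ where $h:=g-LS(\xi,L_g)(f)\in L_g$, and control $\|h\|_2$ using the discrete norm.

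The key step is that $h$ lies in a subspace belonging to the collection $\cX_v(\D_N)$, so the assumption \eqref{I3} applies:
$$
 C_1\|h\|_2^2\le \frac1m\sum_{j=1}^m |h(\xi^j)|^2=\|S(h,\xi)\|_{2,\bw_m}^2.
$$
Splitting $h=(g-f)+(f-LS(\xi,L_g)(f))$ inside the discrete semi-norm and applying the triangle inequality gives $\|S(h,\xi)\|_{2,\bw_m}\le \|S(f-g,\xi)\|_{2,\bw_m}+\|S(f-LS(\xi,L_g)(f),\xi)\|_{2,\bw_m}$. Here I would invoke the defining minimality of $LS(\xi,L_g)(f)$ as the argmin of $u\mapsto \|S(f-u,\xi)\|_{2,\bw_m}$ over $u\in L_g$: since $g\in L_g$, the second term is at most the first, yielding $\|S(h,\xi)\|_{2,\bw_m}\le 2\|S(f-g,\xi)\|_{2,\bw_m}$. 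Combined with the discretization bound, this produces a constant of order $C_1^{-1/2}$ in front of $\big(\tfrac1m\sum_j |(f-g)(\xi^j)|^2\big)^{1/2}$.

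Putting the pieces together,
$$
 \|f-LS(\xi,\cX_v(\D_N))(f)\|_2\le \|f-g\|_2+2C_1^{-1/2}\Bigl(\frac1m\sum_{j=1}^m |(f-g)(\xi^j)|^2\Bigr)^{1/2}.
$$
For \eqref{I6} I would then bound both terms on the right by $\|f-g\|_\infty$. For \eqref{I5} I would use that the probability measure $\mu_\xi=\tfrac12\mu+\tfrac1{2m}\sum_j\delta_{\xi^j}$ majorizes each of the two measures by a factor $\tfrac12$, so that $\|f-g\|_2\le\sqrt2\,\|f-g\|_{L_2(\mu_\xi)}$ and $\big(\tfrac1m\sum_j|(f-g)(\xi^j)|^2\big)^{1/2}\le\sqrt2\,\|f-g\|_{L_2(\mu_\xi)}$. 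Finally, taking the infimum over $g\in\Sigma_v(\D_N)$ on both sides delivers the two inequalities (the constants I obtain agree up to replacing the $2C_1^{-1}$ in the statement with $2C_1^{-1/2}$, which is a matter of normalization in the discretization constant).

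The only delicate point is the step where one-sided universal discretization is applied to $h$: one must verify that $h$ actually lies in a single member of the collection $\cX_v(\D_N)$, not merely in $\Sigma_v(\D_N)$ (which is a union of such subspaces and is not a vector space). That is ensured because both $g$ and $LS(\xi,L_g)(f)$ belong to the same subspace $L_g$, so their difference does too; everything else is a clean combination of the triangle inequality, the minimization defining the least squares operator, and the inequality \eqref{I3}.
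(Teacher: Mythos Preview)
Your argument is essentially the paper's own proof: the paper packages your steps 2--4 (triangle inequality, one-sided discretization applied to $g-LS(\xi,L_g)(f)\in L_g$, minimality of the least squares fit in the discrete norm, and the $\mu_\xi$-majorization) into a separate lemma (Theorem~\ref{AT1} and Remark~\ref{AR1}) and then applies it to every $L\in\cX_v(\D_N)$, taking the minimum. Your observation about $2C_1^{-1}$ versus $2C_1^{-1/2}$ is well taken---the $C_1$ in Definition~\ref{ID1} multiplies $\|f\|_2^2$, whereas the $C_1$ in condition~{\bf A1} multiplies $\|u\|_p$, so the paper is tacitly identifying the two normalizations.
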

 
  We now formulate a direct corollary of Theorem \ref{IT2} for function classes. Denote by
 $\cA(m,v,k,C_1)$   the family of all collections $\cX:= \{X(n)\}_{n=1}^k$ of finite-dimensional  linear subspaces $X(n)$, $\dim X(n)=v$, of the $L_2(\Omega,\mu)$ such that for each $\cX$  there exists a set $\xi:= \{\xi^j\}_{j=1}^m \subset \Omega $, which provides {\it one-sided universal discretization} with constant $C_1$ for the $\cX$ (see (\ref{I3}). Theorem \ref{IT4} provides some sufficient conditions to guarantee that the family $\cA(m,v,k,C_1)$ is nonempty. 
 We need some more notations. For $\bF\subset \cC(\Omega)$ denote
$$
 \sigma_v(\bF,\D_N)_{(2,m)}:= \sup_{\{\xi^1,\dots,\xi^m\}\subset\Omega} \sigma_v(\bF,\D_N)_{L_{2} (\Omega, \mu_{\xi})}.
$$
For brevity, in the case $X=L_p(\Omega,\mu)$ we write $\sigma_v(f,\D_N)_p$ instead of \newline $\sigma_v(f,\D_N)_{L_p(\Omega,\mu)}$.
For any dictionary $\D_N$ we have
$$2^{-1/2} \sigma_{v}(\bF,\D_N)_2\leq \sigma_{v}(\bF,\D_N)_{(2,m)}\leq  \sigma_v(\bF,\D_N)_\infty. $$

In this paper we study the following new
 characteristic, which controls performance of the algorithm $LS(\xi,\cX_v(\D_N))(\cdot)$ on 
 a function class $\bF$
$$
\varrho^{ls}_{m,v}(\bF,\D_N,L_2) := \inf_{\xi \in\Og^m} \sup_{f\in \bF} \min_{L\in\cX_v(\D_N)} \|f-LS(\xi,L)(f)\|_2.
$$
Clearly, for any integer  $m\ge 1$
$$
\varrho^{ls}_{m,v}(\bF, \D_N,L_2) \ge \sigma_v(\bF,\D_N)_2.
$$
The quantity $\varrho^{ls}_{m,v}(\bF,\D_N,L_2)$ shows how close we can get to the ideal $v$-term approximation error $\sigma_v(\bF,\D_N)_2$ by using function values at $m$ points and by applying associated least squares algorithms. 

\begin{Theorem}\label{IT3} Let $m$, $v$, $N$ be given natural numbers such that $v\le N$. Set $k=\binom{N}{v}$. Assume that a dictionary $\D_N$ is such that $\cX_v(\D_N) \in \cA(m,v,k,C_1)$. Then for   any compact subset $\bF$ of $\C(\Omega)$,  we have
\be\label{I7}
 \varrho_{m,v}^{ls}(\bF,\D_N,L_2(\Omega,\mu)) \le 2^{1/2}(2C_1^{-1} +1) \sigma_{v}(\bF,\D_N)_{(2,m)}
 \ee
 and
 \be\label{I8}
 \varrho_{m,v}^{ls}(\bF,\D_N,L_2(\Omega,\mu)) \le  (2C_1^{-1} +1) \sigma_{v}(\bF,\D_N)_\infty.
 \ee
 \end{Theorem}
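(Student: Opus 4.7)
The plan is to derive Theorem \ref{IT3} as an essentially direct consequence of Theorem \ref{IT2}, the only nontrivial bookkeeping being to check that the assumption $\cX_v(\D_N) \in \cA(m,v,k,C_1)$ with $k=\binom{N}{v}=|\cX_v(\D_N)|$ is precisely what is needed to invoke Theorem \ref{IT2}.

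First I would unpack the definitions. The hypothesis $\cX_v(\D_N)\in \cA(m,v,k,C_1)$ means there exists a point set $\xi^*=\{\xi^{*,j}\}_{j=1}^m\subset\Omega$ that provides one-sided universal discretization with constant $C_1$ for the collection $\cX_v(\D_N)$, which has cardinality $k=\binom{N}{v}$ and dimension $v$. This is exactly the hypothesis on $\xi$ in Theorem \ref{IT2}. Therefore, for every $f\in\C(\Omega)$ we may apply \eqref{I5} and \eqref{I6} with $\xi=\xi^*$.

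Next I would take suprema and infima in the right order. By the definition of $L(\xi,f)$ and $LS(\xi,\cX_v(\D_N))(f)$ in \eqref{I4}, we have the identity
\[
\|f-LS(\xi^*,\cX_v(\D_N))(f)\|_2 \;=\; \min_{L\in \cX_v(\D_N)}\|f-LS(\xi^*,L)(f)\|_2.
\]
Combining this with \eqref{I5}, taking the supremum over $f\in\bF$, and using that
\[
\sup_{f\in\bF}\sigma_v(f,\D_N)_{L_2(\Omega,\mu_{\xi^*})} \;=\; \sigma_v(\bF,\D_N)_{L_2(\Omega,\mu_{\xi^*})} \;\le\; \sigma_v(\bF,\D_N)_{(2,m)}
\]
by the definition of $\sigma_v(\bF,\D_N)_{(2,m)}$ (a supremum over all $m$-point sets), I obtain
\[
\sup_{f\in\bF}\min_{L\in \cX_v(\D_N)}\|f-LS(\xi^*,L)(f)\|_2 \;\le\; 2^{1/2}(2C_1^{-1}+1)\,\sigma_v(\bF,\D_N)_{(2,m)}.
\]
Since the left-hand side is an upper bound for $\varrho^{ls}_{m,v}(\bF,\D_N,L_2(\Omega,\mu))$ (we took one particular admissible $\xi^*\in\Omega^m$ rather than the infimum), this yields \eqref{I7}. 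The same argument with \eqref{I6} replacing \eqref{I5}, and $\sigma_v(f,\D_N)_\infty$ replacing $\sigma_v(f,\D_N)_{L_2(\Omega,\mu_\xi)}$, yields \eqref{I8}.

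There is no real obstacle here; the whole content of the corollary is packaged in Theorem \ref{IT2}. The two things one must be careful about are (i) the combinatorial identification $k=\binom{N}{v}=|\cX_v(\D_N)|$, which is what makes the membership $\cX_v(\D_N)\in \cA(m,v,k,C_1)$ meaningful, and (ii) the fact that $\inf_{\xi}$ in the definition of $\varrho^{ls}_{m,v}$ is trivially controlled from above by any single admissible choice, namely the universal discretization set $\xi^*$.
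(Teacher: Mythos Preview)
Your proposal is correct and matches the paper's approach: the paper presents Theorem~\ref{IT3} explicitly as ``a direct corollary of Theorem~\ref{IT2} for function classes'' and gives no separate proof, so your derivation---fixing the universal discretization set $\xi^*$ guaranteed by the hypothesis $\cX_v(\D_N)\in\cA(m,v,k,C_1)$, applying \eqref{I5}--\eqref{I6} pointwise, taking $\sup_{f\in\bF}$, and bounding the infimum over $\xi$ by the single choice $\xi^*$---is exactly what is intended.
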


Theorems \ref{IT2} and \ref{IT3} guarantee good performance of the algorithm \newline $LS(\xi,\cX_v(\D_N))(\cdot)$ under the one-sided universal discretization condition on the collection $\cX_v(\D_N)$. It turns out that recent results on universal discretization provide good 
unconditional bounds for performance of the algorithm $LS(\xi,\cX_v(\D_N))(\cdot)$ with respect to special systems $\D_N$. We now formulate the corresponding results. 
In Section \ref{B} we prove some unconditional results 
under certain assumptions on the dictionary $\D_N$. In order to apply Theorem \ref{IT3} we need 
to know the number $m(\cX_v(\D_N),C_1)$ defined in Definition \ref{ID1}. Here we use the known results on the universal discretization. In this paper we discuss in detail the following type of dictionaries. 

{\bf Example of a dictionary.} We assume  that 
$ \Phi_N:=\{\ff_j\}_{j=1}^N$ is a dictionary of $N$  uniformly bounded functions on $\Og \subset \R^d$ satisfying 
\be \label{I9}\sup_{\bx\in\Og} |\vi_j(\bx)|\leq 1,\   \ 1\leq j\leq N,\ee
and that there exists a constant $K>0$ such that   for any $(a_1,\ldots, a_N) \in\bbC^N,$
\begin{equation}\label{Riesz}
  \sum_{j=1}^N |a_j|^2 \le K  \left\|\sum_{j=1}^N a_j\ff_j\right\|^2_2 .
\end{equation}
Clearly, uniformly bounded (with bound $1$) orthonormal system, for instance, the trigonometric system, satisfies (\ref{I9}) and (\ref{Riesz}). 

 In \cite{DT} under an extra condition that $\Phi_N$ from the above example is a Riesz basis with constants $R_1$ and $R_2$
 we proved that
 \be\label{I11}
 m(\cX_v(\Phi_N),1/2) \le C(R_1,R_2)v(\log N)^2(\log(2v))^2.
 \ee  
 A proper modification of the corresponding proof from \cite{DT}, which we present in our 
 paper \cite{DT1} (see Theorem \ref{BT1} below), allows us to obtain a slightly better bound in the case of $\Phi_N$ from the above example
 \be\label{m}
 m(\cX_v(\Phi_N),1/2) \le C(K)	 v (\log N)  \log^2(2v)(\log (2v)+ \log\log N).
 \ee 

We formulate the corresponding corollary of Theorem \ref{IT3} as a theorem. 
 
 \begin{Theorem}\label{IT4} Let $ \Phi_N:=\{\ff_j\}_{j=1}^N$ be  a dictionary  satisfying \eqref{I9} and \eqref{Riesz} for some constant $K\ge 1$. 
 	Then there exists a constant $C=C_K\ge 1$ depending only on $K$ such that for any probability measure $\mu$ on the set $\Og\subset\RR^d$,  any compact subset $\bF$ of $\C(\Omega)$, and any integers $1\leq v\leq N$ and
 $$m\ge C	 v (\log N)  \log^2(2v)(\log (2v)+ \log\log N),$$
 	we have 
\be\label{I13}
 \varrho_{m,v}^{ls}(\bF,\Phi_N,L_2(\Omega,\mu) ) \le 5\sqrt{2} \sigma_{v} (\bF,\Phi_N)_{(2,m)}
 \ee
and 
\be\label{I14}
\varrho_{m,v}^{ls}(\bF,\Phi_N,L_2(\Omega,\mu) ) \le 5 \sigma_v(\bF,\Phi_N)_\infty.
\ee
\end{Theorem}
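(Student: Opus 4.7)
The plan is to derive Theorem~\ref{IT4} as a direct combination of Theorem~\ref{IT3} with the universal discretization bound~\eqref{m}, which is stated just above Theorem~\ref{IT4} (and proved later as Theorem~\ref{BT1}). Since both ingredients are already available, the task reduces to verifying that the hypotheses of Theorem~\ref{IT3} are met with $C_1=1/2$ for the collection $\cX_v(\Phi_N)$, and then reading off the constants.

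First I would check that $\cX_v(\Phi_N)$ fits into the framework defining $\cA(m,v,k,C_1)$ with $k=\binom{N}{v}$. The Riesz-type condition \eqref{Riesz} forces any $v$ of the functions $\varphi_j$ to be linearly independent, so every subspace in $\cX_v(\Phi_N)$ has dimension exactly $v$, and the total number of such subspaces is $\binom{N}{v}$. Next, let $C_K$ be the constant provided by~\eqref{m}, so that
\[
m(\cX_v(\Phi_N),1/2) \le C_K\, v(\log N)\log^2(2v)\bigl(\log(2v)+\log\log N\bigr).
\]
For any $m$ satisfying the hypothesis of the theorem, this inequality and Definition~\ref{ID1} together produce a point set $\xi=\{\xi^j\}_{j=1}^m\subset\Omega$ which gives one-sided universal discretization with constant $1/2$ for the whole collection $\cX_v(\Phi_N)$. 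In the notation of Theorem~\ref{IT3}, this is precisely the assertion $\cX_v(\Phi_N)\in\cA(m,v,k,1/2)$.

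With this verification in place, I would invoke Theorem~\ref{IT3} with $C_1=1/2$. The numerical factor $2C_1^{-1}+1$ then equals $5$, so \eqref{I7} and \eqref{I8} specialize directly to \eqref{I13} and \eqref{I14}. Choosing the constant $C$ in the statement of Theorem~\ref{IT4} to be the $C_K$ from~\eqref{m} completes the deduction.

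The genuine difficulty is not in this short deduction but in the universal discretization bound~\eqref{m} itself: one must construct a \emph{single} point set of nearly optimal size $m\sim v$ (up to polylogarithmic factors) which discretizes every one of the exponentially many subspaces in $\cX_v(\Phi_N)$ simultaneously, and this is where the uniform boundedness \eqref{I9} and the Riesz condition \eqref{Riesz} enter substantively. The proof relies on random sampling together with matrix Chernoff-type concentration, combined with a chaining/union-bound argument over subsets of size $v$; this is the content of Theorem~\ref{BT1}. Once \eqref{m} is granted, the argument for Theorem~\ref{IT4} is exactly the chain of implications described above.
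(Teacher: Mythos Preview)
Your proposal is correct and follows essentially the same route as the paper: the paper simply states that Theorem~\ref{BT1} ensures $\cX_v(\Phi_N)\in\cA(m,v,k,1/2)$ with $k=\binom{N}{v}$, and then ``Theorems~\ref{IT3} and~\ref{BT1} imply Theorem~\ref{IT4}.'' Your added observation that \eqref{Riesz} forces linear independence (so each subspace genuinely has dimension $v$) is a useful detail the paper leaves implicit.
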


 
 As a corollary of Theorem \ref{IT4}, we obtain the following Corollary \ref{cor-1-1}.
 \begin{Corollary}\label{cor-1-1}
 	Let $ \Phi_N:=\{\ff_j\}_{j=1}^N$ be  a dictionary  satisfying \eqref{I9} and \eqref{Riesz} for some constant $K\ge 1$ and some  probability measure $\mu$ on  $\Og$.
  Let  
 	 \[ A_1^r(\Phi_N):=\Bigl\{ \sum_{ j=1}^N c_j \ff_j:\    \    \sum_{j=1}^N |c_j|j^r \leq 1\Bigr\}\]
 	 for some constant $r\ge 0$.
 	 Let $\bF\subset \cC(\Og)$ be a function class such that 
 	 \[ \dist(\bF, A_1^r(\Phi_N))_\infty :=\sup_{f\in\bF} \inf_{g\in A_1^r(\Phi_N)}\sup_{\bx\in\Og} |f(\bx)-g(\bx)| <\infty. \]
 	  Then there exists a constant $C_K\ge 1$ depending only on $K$ such that for any integers $1\leq v\leq N$ and
 	 $$m\ge C_K	 v (\log N)  \log^2(2v)(\log (2v)+ \log\log N),$$
 	we have 
 	\be\label{1-16}
 	\varrho_{m,2v}^{ls}(\bF,\Phi_N,L_2(\Omega,\mu) ) \le 5\sqrt{2}\left(   v^{-r-\f12}+  \dist(\bF, A_1^r(\Phi_N))_\infty\right)  .
 	\ee
 \end{Corollary}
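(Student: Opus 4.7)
The plan is to combine Theorem \ref{IT4}, applied with $2v$ in place of $v$ (which only increases the implicit threshold constant $C_K$ by a fixed factor, since the condition on $m$ scales in the same way), with a two-step sparse approximation of elements of $A_1^r(\Phi_N)$ in every $L_2(\Omega,\mu_\xi)$-norm. Specifically, it suffices to show that
\[ \sup_{\xi\in \Omega^m}\;\sup_{g\in A_1^r(\Phi_N)}\;\inf_{\tilde g \in \Sigma_{2v}(\Phi_N)} \|g-\tilde g\|_{L_2(\Omega,\mu_\xi)} \lesssim v^{-r-1/2}, \]
for then, given any $f\in \bF$, one chooses $g\in A_1^r(\Phi_N)$ with $\|f-g\|_\infty$ close to $\dist(\bF, A_1^r(\Phi_N))_\infty$ and uses the triangle inequality $\|f-\tilde g\|_{L_2(\mu_\xi)} \le \|f-g\|_\infty + \|g-\tilde g\|_{L_2(\mu_\xi)}$ to deduce
\[ \sigma_{2v}(\bF,\Phi_N)_{(2,m)} \le v^{-r-1/2} + \dist(\bF, A_1^r(\Phi_N))_\infty, \]
which combined with \eqref{I13} of Theorem \ref{IT4} (for $2v$) yields \eqref{1-16}.

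For the sparse approximation estimate, write $g = \sum_{j=1}^N c_j \ff_j$ with $\sum_j |c_j| j^r \le 1$, reorder as $|c_{\pi(1)}|\ge|c_{\pi(2)}|\ge\cdots$, and set $T := \{\pi(1),\dots,\pi(v)\}$. The elementary rearrangement inequalities
\[ |c_{\pi(k)}|\sum_{i=1}^k \pi(i)^r \le \sum_{i=1}^k |c_{\pi(i)}|\pi(i)^r \le 1, \qquad \sum_{i=1}^k \pi(i)^r \ge \sum_{i=1}^k i^r \gtrsim k^{r+1} \]
give $|c_{\pi(k)}|\lesssim k^{-r-1}$, so $S_v := \sum_{j\notin T} |c_j| \lesssim v^{-r}$ (trivially $S_v \le 1 = v^0$ when $r=0$). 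Let $g_T := \sum_{j\in T} c_j \ff_j \in \Sigma_v(\Phi_N)$. By \eqref{I9}, $\|\ff_j\|_{L_2(\mu_\xi)} \le \|\ff_j\|_\infty \le 1$ for every probability measure $\mu_\xi$. Hence $(g-g_T)/S_v = \sum_{j\notin T}(c_j/S_v)\ff_j$ lies in the closed absolute convex hull of the unit ball of the Hilbert space $L_2(\Omega,\mu_\xi)$, and Maurey's lemma produces an $h\in \Sigma_v(\Phi_N)$ supported in $\{1,\dots,N\}\setminus T$ with
\[ \|g-g_T - h\|_{L_2(\mu_\xi)} \le \frac{S_v}{\sqrt v} \lesssim v^{-r-1/2}. \]
Setting $\tilde g := g_T + h \in \Sigma_{2v}(\Phi_N)$ completes the estimate.

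The main point to verify is that the Maurey step is carried out uniformly in $\xi$; this works precisely because condition \eqref{I9} gives $\|\ff_j\|_{L_2(\mu_\xi)}\le 1$ for \emph{every} probability measure $\mu_\xi$, so the Hilbert-space convex-hull approximation applies with the same constant for all choices of sampling points. The Riesz property \eqref{Riesz} enters only through the universal-discretization bound \eqref{m} underlying Theorem \ref{IT4}. The splitting $2v = v+v$ (largest coefficients plus a Maurey correction for the remainder) is what forces the $2v$-term sparsity in \eqref{1-16}; any $r$-dependent constants appearing in the tail bound and in Maurey's lemma are absorbed into the implicit constant in the statement.
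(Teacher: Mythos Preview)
Your approach mirrors the paper's: reduce to Theorem~\ref{IT4} (with $2v$ in place of $v$) and then bound $\sigma_{2v}(\bF,\Phi_N)_{(2,m)}$ by $v^{-r-1/2}+\dist(\bF,A_1^r(\Phi_N))_\infty$ via a $v+v$ splitting followed by a Maurey/greedy step on the tail in the Hilbert space $L_2(\Omega,\mu_\xi)$. The only substantive difference is \emph{which} $v$ indices you keep. The paper keeps the first $v$ indices $\{1,\dots,v\}$ and uses the one-line tail bound
\[
\sum_{j>v}|c_j|\;\le\; v^{-r}\sum_{j>v}|c_j|\,j^r\;\le\; v^{-r},
\]
which feeds the Hilbert-space greedy/Maurey bound with constant exactly $1$ and hence gives $\sigma_{2v}\le v^{-r-1/2}+\dist$ with no extra factor. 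Your choice of the $v$ largest $|c_j|$ together with the rearrangement estimate $|c_{\pi(k)}|\lesssim k^{-r-1}$ introduces an $r$-dependent constant in $S_v\lesssim v^{-r}$ (of order $(r{+}1)/r$ for small $r>0$), so as written you do \emph{not} recover the explicit constant $5\sqrt{2}$ in \eqref{1-16}; there is no ``implicit constant in the statement'' to absorb it. The fix is immediate: either adopt the paper's split at index $v$, or simply observe that your set $T$ minimizes the $\ell_1$-tail among all $v$-element index sets, whence $S_v\le\sum_{j>v}|c_j|\le v^{-r}$ with constant $1$.
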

The following comment is proved at the end of Section \ref{B}. 

{\bf Comment \ref{I}.2.}   We point out that in  the special  case when $\bF=A_1^r(\Phi_N)$ and the dictionary  $ \Phi_N:=\{\ff_j\}_{j=1}^N$ is orthogonal,  the estimate \eqref{1-16} in
 	   Corollary~\ref{cor-1-1}  is sharp  up to a general constant factor. 
	   
 In Section \ref{vt} we demonstrate how the above discussed technique of nonlinear sparse recovery, namely Theorem \ref{IT2}, 
 allows us to prove lower bounds for best $v$-term approximation in the uniform norm for special dictionaries. Also we prove there an inequality between linear and nonlinear asymptotic characteristics of some special function classes. Namely, we prove the following inequality 
 \be\label{I16}
 \kappa_m(W) \le (2C_1^{-1}+1) \sigma_v(W,\D_N)_\infty
 \ee
 provided that
 $m\ge m(\cX_v(\D_N), C_1).$
 Here,   $\kappa_m(W)$ is the best error of numerical 
 integration with $m$ knots of functions from $W$, and $W$ is a special function class (see detailed definitions in Section \ref{vt}).  
 
In Section \ref{C} we discuss in detail the case, when the dictionary $\D_N$ consists of trigonometric 
 functions $e^{i(\bk,\bx)}$ and the function class $\bF$ is a class of multivariate functions with mixed smoothness. Those results are based on Theorem \ref{IT4} and known results on best $v$-term approximation with respect to the trigonometric system. 
 
 In Section \ref{G} we study the univariate nonperiodic case. In addition to the results from 
 Sections \ref{I} -- \ref{B} we use there newly developed results on best $v$-term approximation with respect to the system of Gegenbauer polynomials.  These results extend and complement the corresponding results 
 from \cite{JUV}.

  \section{Conditional result}
\label{A}

We recall some notations and prove a conditional result similar to the one from \cite{VT183}. 
Let $X_N$ be an $N$-dimensional subspace of the space of continuous functions $\C(\Omega)$. For a fixed $m$ and a set of points  $\xi:=\{\xi^\nu\}_{\nu=1}^m\subset \Omega$ we associate with a function $f\in \C(\Omega)$ a vector (sample vector)
$$
S(f,\xi) := (f(\xi^1),\dots,f(\xi^m)) \in \bbC^m.
$$
Denote
$$
\|S(f,\xi)\|_p:= \left(\frac{1}{m}\sum_{\nu=1}^m |f(\xi^\nu)|^p\right)^{1/p},\quad 1\le p<\infty,
$$
and 
$$
\|S(f,\xi)\|_\infty := \max_{\nu}|f(\xi^\nu)|.
$$
For a positive weight $\bw:=(w_1,\dots,w_m)\in \R^m$ consider the following norm
$$
\|S(f,\xi)\|_{p,\bw}:= \left(\sum_{\nu=1}^m w_\nu |f(\xi^\nu)|^p\right)^{1/p},\quad 1\le p<\infty.
$$
Define the best approximation of $f\in L_p(\Omega,\mu)$, $1\le p\le \infty$ by elements of $X_N$ as follows
$$
d(f,X_N)_{L_p(\Og, \mu)} := \inf_{u\in X_N} \|f-u\|_{L_p(\Og, \mu)}.
$$
It is well known that there exists an element, which we denote $P_{X_N,p}(f)\in X_N$, such that
\begin{equation*}
\|f-P_{X_N,p}(f)\|_{L_p(\Og, \mu)} = d(f,X_N)_{L_p(\Og, \mu)}.
\end{equation*}
 (See, for instance, \cite[p. 59, Theorem 1.1]{DeL}).
The operator $P_{X_N,p}: L_p(\Omega,\mu) \to X_N$ is called the Chebyshev projection. 

We make the following two assumptions.

{\bf A1. Discretization.} Let $1\le p\le \infty$. Suppose that $\xi:=\{\xi^1,\ldots,\xi^m\}\subset \Omega$ is such that for any 
$u\in X_N$ in the case $1\le p<\infty$ we have
$$
C_1\|u\|_{L_p(\Og,\mu)} \le \|S(u,\xi)\|_{p,\bw}  
$$
and in the case $p=\infty$ we have
$$
C_1\|u\|_\infty \le \|S(u,\xi)\|_{\infty}  
$$
with a positive constant $C_1$ which may depend on $d$ and $p$. 

{\bf A2. Weights.} Suppose that there is a positive constant $C_2=C_2(d,p)$ such that 
$\sum_{\nu=1}^m w_\nu \le C_2$.

Consider the following well known recovery operator (algorithm) (see, for instance, \cite{CM})
$$
\ell p\bw(\xi)(f) := \ell p\bw(\xi,X_N)(f):=\text{arg}\min_{u\in X_N} \|S(f-u,\xi)\|_{p,\bw}.
$$
Note that the above algorithm $\ell p\bw(\xi)$ only uses the function values $f(\xi^\nu)$, $\nu=1,\dots,m$. In the case $p=2$ it is a linear algorithm -- orthogonal projection with respect 
to the norm $\|\cdot\|_{2,\bw}$. Therefore, in the case $p=2$ approximation error by the algorithm $\ell 2\bw(\xi)$ gives an upper bound for the recovery characteristic $\ro_m(\cdot, L_2)$. In the case $p\neq 2$ approximation error by the algorithm $\ell p\bw(\xi)$ gives an upper bound for the recovery characteristic $\ro_m^*(\cdot, L_p)$ (see Section \ref{D} below). 

 Under assumptions {\bf A1} and {\bf A2}, we have

\begin{Theorem}\label{AT1} Suppose that conditions  {\bf A1} and {\bf A2} are satisfied for some $\xi\in\Og^m$ and constants $C_1, C_2>0$.  Then for any $f\in \C(\Omega)$ and  $1\le p<\infty$ we have
\begin{equation}\label{A1}
\|f-\ell p\bw(\xi, X_N)(f)\|_{L_p(\Og, \mu)} \le 2^{1/p}(2C_1^{-1}C_2^{1/p} +1)d(f, X_N)_{L_p(\Og, \mu_{\bw,\xi})},
\end{equation}
where $\mu_{\bw, \xi}$ is  the probability measure  given by 
\[ \mu_{\bw, \xi}=\f12 \mu +\f 1{2\|\bw\|_1 }\sum_{j=1}^m w_j \da_{\xi^j},\   \ \|\bw\|_1=\sum_{j=1}^m w_j.\]
\end{Theorem}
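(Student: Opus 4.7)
The plan is to follow a standard near-best-approximation argument in two halves: compare the least-squares output $u^*:=\ell p\bw(\xi,X_N)(f)$ to a best approximant $g\in X_N$ in the measure that dominates both pieces, namely $\mu_{\bw,\xi}$. Concretely, I would set $g$ to be a minimizer of $d(f,X_N)_{L_p(\Og,\mu_{\bw,\xi})}$ (which exists by the standard best-approximation theorem cited just before the statement) and split
\[
\|f-u^*\|_{L_p(\Og,\mu)} \le \|f-g\|_{L_p(\Og,\mu)} + \|g-u^*\|_{L_p(\Og,\mu)}.
\]

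For the first summand, I would observe directly from the definition
$\mu_{\bw,\xi}=\tfrac12\mu+\tfrac{1}{2\|\bw\|_1}\sum_j w_j\delta_{\xi^j}$
that for any continuous $h$,
\[
\|h\|_{L_p(\Og,\mu)}\le 2^{1/p}\|h\|_{L_p(\Og,\mu_{\bw,\xi})}
\quad\text{and}\quad
\|S(h,\xi)\|_{p,\bw}\le (2\|\bw\|_1)^{1/p}\|h\|_{L_p(\Og,\mu_{\bw,\xi})}.
\]
So the first piece is already $\le 2^{1/p}d(f,X_N)_{L_p(\Og,\mu_{\bw,\xi})}$.

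For the harder second summand, since $g-u^*\in X_N$ I would invoke assumption \textbf{A1} to push the $L_p(\mu)$ norm down to the discrete weighted norm, then bound that by two applications of the triangle inequality together with the defining optimality of $u^*$:
\[
C_1\|g-u^*\|_{L_p(\Og,\mu)}\le \|S(g-u^*,\xi)\|_{p,\bw}\le \|S(f-g,\xi)\|_{p,\bw}+\|S(f-u^*,\xi)\|_{p,\bw}\le 2\|S(f-g,\xi)\|_{p,\bw}.
\]
Then the second estimate of the previous paragraph, combined with $\|\bw\|_1\le C_2$ from \textbf{A2}, yields
\[
\|g-u^*\|_{L_p(\Og,\mu)}\le 2C_1^{-1}(2C_2)^{1/p}\|f-g\|_{L_p(\Og,\mu_{\bw,\xi})}=2^{1/p}\cdot 2C_1^{-1}C_2^{1/p}\, d(f,X_N)_{L_p(\Og,\mu_{\bw,\xi})}.
\]
Adding the two bounds gives exactly the advertised factor $2^{1/p}(2C_1^{-1}C_2^{1/p}+1)$.

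There isn't really a serious obstacle here; the argument is the well-known three-line Lebesgue-inequality template (best approximation $\to$ discretization on the difference $\to$ minimizer optimality), and the only delicate point is choosing the right reference measure. The whole reason to introduce $\mu_{\bw,\xi}$ is precisely to make both $\|\cdot\|_{L_p(\mu)}$ and the discrete weighted norm $\|S(\cdot,\xi)\|_{p,\bw}$ simultaneously dominated by the same best-approximation quantity, so that after adding the two triangle-inequality pieces a single $d(f,X_N)_{L_p(\mu_{\bw,\xi})}$ comes out on the right-hand side. The only bookkeeping to watch is that the constant $(2C_2)^{1/p}$ factors as $2^{1/p}C_2^{1/p}$, which combines cleanly with the $2^{1/p}$ coming from the first summand to produce the stated prefactor.
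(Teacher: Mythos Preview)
Your proof is correct and follows essentially the same route as the paper: triangle inequality to split off a best approximant $g\in X_N$, assumption \textbf{A1} on $g-u^*\in X_N$, the minimizer property of $u^*$, and then the two domination inequalities for $\mu_{\bw,\xi}$ together with \textbf{A2}. The only cosmetic difference is that the paper works with an arbitrary $g\in X_N$ and takes the infimum at the end, whereas you fix $g$ to be a minimizer from the start.
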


Theorem \ref{AT1} was essentially proved in \cite{VT183}, where  the  result was formulated with  $d(f, X_N)_{\infty}$ in place of  $d(f, X_N)_{L_p(\Og, \mu_{\bw,\xi})}$. For the sake of completeness, we include a proof of Theorem \ref{AT1} below. 

\begin{proof}[Proof of Theorem \ref{AT1}]
	For simplicity, we write $u=\ell p\bw(\xi, X_N)(f)$. 
	For any $g\in X_N$,  we have
	\begin{align*}
	\|f&-u\|_{L_p(\Og, \mu)}\leq \|f-g\|_{L_p(\Og, \mu)}+\|g-u\|_{L_p(\Og, \mu)}\\
	&\leq \|f-g\|_{L_p(\Og, \mu)}+ C_1^{-1} \|S(g-u,\xi)\|_{p,\bw}  \\
	&\leq \|f-g\|_{L_p(\Og, \mu)}+  C_1^{-1}\Bl[ \|S(f-g,\xi)\|_{p,\bw}  +\|S(f-u,\xi)\|_{p,\bw} \Br]\\
	&\leq \|f-g\|_{L_p(\Og, \mu)}+  2C_1^{-1}\|S(f-g,\xi)\|_{p,\bw}\\
	& \leq 2^{1/p}(1+2C_1^{-1}C_2^{1/p} )\|f-g\|_{L_p(\Og, \mu_{\bw,\xi})},
	\end{align*}
	where we used assumption {\bf A1} and the fact that $g-u\in X_N$ in the second step, and the fact that 
	\[ u=\ell p\bw(\xi,X_N)(f):=\text{arg}\min_{u\in X_N} \|S(f-u,\xi)\|_{p,\bw}\]
	in the fourth step.
	In the last step we used  assumption {\bf A2} and the inequalities
$$
\|h\|_{L_p(\Og, \mu)}	\le 2^{1/p} \|h\|_{L_p(\Og,\mu_{\bw,\xi})},
$$
$$
\|S(h,\xi)\|_{p,\bw} \le (2 \|\bw\|_1)^{1/p}\|h\|_{L_p(\Og,\mu_{\bw,\xi})},
$$
which give
$$
\|f-g\|_{L_p(\Og, \mu)}\le 2^{1/p} \|f-g\|_{L_p(\Og,\mu_{\bw,\xi})}
$$
and
$$
\|S(f-g,\xi)\|_{p,\bw} \le (2 \|\bw\|_1)^{1/p}\|f-g\|_{L_p(\Og,\mu_{\bw,\xi})}.
$$
	Taking infimum over all $g\in X_N$ gives the desired estimate \eqref{A1}.
	
\end{proof}

\begin{Remark}\label{AR1} The proof similar to the above proof of Theorem \ref{AT1} gives 
the following inequality from \cite{VT183} instead of \eqref{A1}
\begin{equation}\label{A2}
\|f-\ell p\bw(\xi, X_N)(f)\|_{L_p(\Og, \mu)} \le (2C_1^{-1}C_2^{1/p} +1)d(f, X_N)_{\infty}.
\end{equation}
\end{Remark}

We now proceed to the proof of Theorem \ref{IT2}.

\begin{proof}[Proof of Theorem \ref{IT2}.]
Suppose that for $p=2$ a set $\xi:= \{\xi^j\}_{j=1}^m \subset \Omega $ provides one-sided universal discretization for the collection $\cX_v(\D_N)$ with constant $C_1$. Then condition {\bf A1} is satisfied for all $X(n)$ from the collection $\cX_v(\D_N)$ with $p=2$ and $\bw=\bw_m$. Clearly, condition {\bf A2} is satisfied with $C_2=1$. Thus, we can apply Theorem \ref{AT1} for each 
subspace $X(n)$ with the same set of points $\xi$. It gives for all $n=1,\dots,k$, $k=\binom{N}{v}$,
\be\label{A3}
\|f-LS(\xi,X(n))(f)\|_2 \le \sqrt{2}(2C_1^{-1} +1)d(f, X(n))_{L_2(\Og, \mu_\xi)},
\ee
where as above 
$$
\mu_\xi=\mu_{\bw_m, \xi}=\f12 \mu+\f 1{2m}\sum_{j=1}^m\da_{\xi^j}.
$$
 Then, inequality (\ref{A3}) and definition (\ref{I4}) imply
\be\label{A4}
 \|f-LS(\xi,\cX)(f)\|_2 \le \sqrt{2}(2C_1^{-1} +1)\min_{1\le n\le k}d(f, X(n))_{L_2(\Og, \mu_\xi)}.
 \ee
 This proves inequality (\ref{I5}) of Theorem \ref{IT2}. Inequality (\ref{I6}) follows from Remark \ref{AR1}.
 \end{proof} 
 
\section{Proofs of Theorem \ref{IT4} and Corollary \ref{cor-1-1}} 
\label{B}

The proof of Theorem \ref{IT4} is based on Theorem \ref{IT3} and the known result (see Theorem \ref{BT1} below) 
on universal discretization from \cite{DT} and \cite{DT1}. 
  We now cite a result from \cite{DT1} on universal discretization.   We consider a special case when  the dictionary  $\D_N=\Phi_N:=\{\ff_j\}_{j=1}^N$
  satisfies the conditions  \eqref{I9} and \eqref{Riesz}. 

 \begin{Theorem}[\cite{DT1}]\label{BT1} Let $1\le p\le 2$. Assume that $\Phi_N$ is a dictionary  satisfying \eqref{I9} and \eqref{Riesz} for some constant $K>0$. Then for a large enough constant $C=C(p,K)$, and any integer $1\leq v\leq N$,  there exist 
$m$ points
$\xi^1,\ldots, \xi^m \in  \Omega$ with
$$
m \le C v \log N\cdot  \log^2(2v)(\log (2v)+\log\log N)
$$
  such that for any $f\in  \Sigma_v(\Phi_N)$
we have
\begin{equation}\label{1.4u}
\frac{1}{2}\|f\|_p^p \le \frac{1}{m}\sum_{j=1}^m |f(\xi^j)|^p \le \frac{3}{2}\|f\|_p^p.
\end{equation}
\end{Theorem}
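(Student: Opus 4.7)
The plan is to construct the $m$ sample points by random sampling from $\mu$ and then verify the one-sided discretization \eqref{1.4u} by combining concentration on a single $v$-dimensional subspace with a union bound over the $\binom{N}{v}$ coordinate subspaces $V_J:=\operatorname{span}\{\ff_j:j\in J\}$, $J\subset[N]$, $|J|=v$.

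First I would fix such an index set $J$. The assumptions \eqref{I9} and \eqref{Riesz} together give a Nikolskii-type bound $\|f\|_\infty\le C(K)\sqrt{v}\,\|f\|_2$ on $V_J$ (since any $f=\sum_{j\in J}a_j\ff_j$ satisfies $\|f\|_\infty\le\sum|a_j|\le\sqrt{v}\,(\sum|a_j|^2)^{1/2}\le\sqrt{Kv}\,\|f\|_2$). Drawing i.i.d.\ samples $\xi^1,\dots,\xi^m\sim\mu$, for $p=2$ I would apply the matrix Chernoff/Bernstein inequality to the rank-one random matrices $\Phi_J(\xi^\nu)\Phi_J(\xi^\nu)^*$, where $\Phi_J$ is the column vector of the $\ff_j$, $j\in J$. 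This controls the deviation of the empirical Gram matrix from $\mathbb{E}[\Phi_J(\xi)\Phi_J(\xi)^*]$ and yields \eqref{1.4u} on $V_J$ with failure probability $\le\delta$ whenever $m\gtrsim v\log(v/\delta)$. For $p\in[1,2)$ I would instead control the empirical process $\sup_{f\in V_J,\,\|f\|_p=1}\bigl|\frac1m\sum_\nu|f(\xi^\nu)|^p-\|f\|_p^p\bigr|$ by Bernstein's inequality combined with a chaining argument, using the $\|\cdot\|_\infty$-covering estimate $\log\cN(V_J\cap B_{L_p},\|\cdot\|_\infty,\epsilon)\lesssim v\log(C\sqrt{v}/\epsilon)$ in Dudley's entropy integral.

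Next, since there are $\binom{N}{v}\le(eN/v)^v$ subspaces $V_J$, I would set $\delta=\binom{N}{v}^{-1}$ and take a union bound, which gives one-sided universal discretization on all of $\Sigma_v(\Phi_N)$ simultaneously. A raw union bound of this shape already produces $m\lesssim v\log N\cdot\log v$; to extract the precise factor $\log^2(2v)(\log(2v)+\log\log N)$ stated in the theorem, I would either follow the random-sampling step with an iterative subsampling/thinning argument (in the spirit of Batson-Spielman-Srivastava or the Marcus-Spielman-Srivastava paving/Kadison-Singer type reductions), or execute the chaining globally on the union $\bigcup_J V_J$ rather than subspace-by-subspace so that the $\log$ of the subspace count enters through the Dudley integral rather than as an extra multiplicative factor.

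The main obstacle is the case $p<2$: because $|t|^p$ is not quadratic, matrix concentration does not apply directly and one has to run chaining for an empirical process with Bernstein tails, keeping careful track of how the diameter $\sqrt{Kv}$ of the index class interacts with the union over all $\binom{N}{v}$ subspaces. The particular combination $\log^2(2v)(\log(2v)+\log\log N)$ appearing in the sample bound strongly suggests that the chaining and the union bound are intertwined rather than performed sequentially, so the delicate step is arranging the argument so that each $\log$-factor appears exactly once in the final estimate.
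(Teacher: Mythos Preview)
The paper does not prove Theorem~\ref{BT1}; it is quoted from \cite{DT1} (with an earlier, slightly weaker version in \cite{DT}) and used as a black box, so there is no in-paper argument to compare against. That said, your sketch has a genuine gap.

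Your claim that ``a raw union bound of this shape already produces $m\lesssim v\log N\cdot\log v$'' is not correct. For a single $v$-dimensional subspace $V_J$ with the Nikol'skii bound $\|f\|_\infty\le\sqrt{Kv}\,\|f\|_2$ (which you derive correctly), matrix Chernoff --- or the analogous empirical-process bound for $p<2$ --- gives discretization on $V_J$ with failure probability $\le\delta$ once $m\gtrsim Kv\log(v/\delta)$. To survive a union bound over the $\binom{N}{v}\le(eN/v)^v$ subspaces you must take $\delta\le\binom{N}{v}^{-1}$, so that $\log(1/\delta)\gtrsim v\log(N/v)$, and therefore
\[
m\;\gtrsim\;Kv\cdot v\log(N/v)\;\asymp\;K\,v^{2}\log N,
\]
not $v\log N$ times a logarithm. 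The extra factor of $v$ is intrinsic to the subspace-by-subspace approach: each of the $\binom{N}{v}$ events costs the full $Kv$ in the exponent, and there are exponentially-in-$v$ many of them.

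Obtaining a sample bound linear in $v$ (up to logarithms) requires abandoning the union bound and running a \emph{single} chaining argument directly on the set $\{f\in\Sigma_v(\Phi_N):\|f\|_p\le 1\}$. This is the route taken in \cite{DT} and \cite{DT1}, and for $p=2$ it is essentially the RIP argument for bounded orthonormal systems going back to Rudelson and Vershynin. The global index set, while not a linear space, still satisfies $\|f\|_\infty\le\sqrt{Kv}$ uniformly and admits an entropy bound of the form $\log\cN(\Sigma_v(\Phi_N)\cap B_{L_p},\|\cdot\|_\infty,\varepsilon)\lesssim v\log(CN/\varepsilon)$; the factor $\log N$ then enters \emph{once}, through the Dudley integral, rather than $v$ times through a union bound. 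You do mention global chaining, but only as a device for fine-tuning the logarithmic exponents; in fact it is what makes the linear-in-$v$ bound possible at all, and the specific combination $\log^2(2v)(\log(2v)+\log\log N)$ records the chaining/Dudley contribution together with a truncation at the appropriate scale.
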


For given $N$, $v\le N$, and $K>0$,  denote by $\cR(N,v,K)$ the family of all collections $\cX_v(\Phi_N)$, where $\Phi_N$ is  a dictionary  satisfying \eqref{I9} and \eqref{Riesz}. Theorem \ref{BT1} guarantees that 
$\cR(N,v,K)$ is a subset of  $\cA(m,v,k,C_1)$ with $k=\binom{N}{v}$, $C_1=1/2$ and  $$m\ge Cv(\log N) (\log(2v))^2 (\log (2v)+\log\log N). $$

Note that for a dictionary $\D_N$ we have 
$$
\min_{L\in \cX_v(\D_N)} d(f,L)_p = \sigma_v(f,\D_N)_p,
$$
where $\sigma_v(f,\D_N)_p$ is the best $v$-term approximation of $f$ with respect to the dictionary $\D_N$ in the $L_p$-norm. 

Theorems \ref{IT3} and  \ref{BT1}   imply  Theorem \ref{IT4}. 
Theorems \ref{IT2} and  \ref{BT1}   imply the following  Theorem \ref{BT2}.

  \begin{Theorem}\label{BT2} Given a number $K>0$,  there exists a constant $C=C(K)$ such that for any   compact subset $\Omega$  of $\R^d$, any probability measure $\mu$ on $\Og$,   for any integer $1\leq v\leq N$, and any dictionary $\Phi_N$  satisfying both  \eqref{I9} and \eqref{Riesz},  there exists a set $\xi =\{\xi^\nu\}_{\nu=1}^m\subset \Omega$ such that  we have for each $f\in \cC(\Omega)$
\be\label{B2}
  \|f-LS(\xi,\cX_v(\Phi_N))(f)\|_2 \le 5\sqrt{2} \sigma_{v}(f,\Phi_N)_{L_2(\Omega,\mu_\xi)}
 \ee
 and
 \be\label{B2a}
  \|f-LS(\xi,\cX_v(\Phi_N))(f)\|_2 \le 5  \sigma_{v}(f,\Phi_N)_\infty,
 \ee
 provided
 $$m\ge Cv(\log N) (\log(2v))^2 (\log (2v)+\log\log N). $$
\end{Theorem}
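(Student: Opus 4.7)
The plan is straightforward: Theorem~\ref{BT2} is a direct combination of the universal discretization result (Theorem~\ref{BT1}) with the conditional Lebesgue-type inequality (Theorem~\ref{IT2}). The only real content is to observe that the two statements align, and to verify that the constants collapse correctly.

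First, I would apply Theorem~\ref{BT1} in the case $p=2$ to the dictionary $\Phi_N$ satisfying \eqref{I9} and \eqref{Riesz}. This produces a constant $C=C(K)$ and, for every $1\le v\le N$ and every $m$ above the stated threshold $m\ge C v(\log N)(\log(2v))^2(\log(2v)+\log\log N)$, a point set $\xi=\{\xi^j\}_{j=1}^m\subset\Omega$ such that
\[
\tfrac12\|f\|_2^2 \;\le\; \frac1m \sum_{j=1}^m |f(\xi^j)|^2 \;\le\; \tfrac32\|f\|_2^2 \qquad \text{for every } f\in\Sigma_v(\Phi_N).
\]
Since $\bigcup_{L\in\cX_v(\Phi_N)} L = \Sigma_v(\Phi_N)$, the left-hand inequality is precisely the one-sided universal discretization condition \eqref{I3} for the collection $\cX_v(\Phi_N)$ with constant $C_1=1/2$, in the sense of Definition~\ref{ID1}.

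Next, I would feed this point set $\xi$ into Theorem~\ref{IT2}. That theorem, applied with $C_1=1/2$, yields for every $f\in\C(\Omega)$ the two bounds
\[
\|f-LS(\xi,\cX_v(\Phi_N))(f)\|_2 \le 2^{1/2}\bl(2C_1^{-1}+1\br)\,\sigma_v(f,\Phi_N)_{L_2(\Og,\mu_\xi)}
\]
and
\[
\|f-LS(\xi,\cX_v(\Phi_N))(f)\|_2 \le \bl(2C_1^{-1}+1\br)\,\sigma_v(f,\Phi_N)_\infty.
\]
With $C_1=1/2$ one has $2C_1^{-1}+1=5$, so these become exactly \eqref{B2} and \eqref{B2a}. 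No further estimation is required.

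There is no real obstacle here, since the hard technical work sits in Theorem~\ref{BT1} (the universal discretization construction, cited from \cite{DT1}) and in Theorem~\ref{IT2} (the conditional Lebesgue-type inequality proved in Section~\ref{A}). The only thing one has to be careful about is to invoke Theorem~\ref{BT1} with $p=2$ (rather than some other $p\in[1,2]$), so that the resulting discretization constant matches the one-sided $L_2$ discretization constant required as input to Theorem~\ref{IT2}; and to track that the factor $2C_1^{-1}+1$ specializes to $5$, giving the numerical constants $5\sqrt{2}$ and $5$ in \eqref{B2} and \eqref{B2a}.
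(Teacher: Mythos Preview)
Your proposal is correct and matches the paper's own argument exactly: the paper's proof of Theorem~\ref{BT2} consists of the single sentence ``Theorems~\ref{IT2} and~\ref{BT1} imply the following Theorem~\ref{BT2},'' and you have simply unpacked that implication, correctly tracking that $C_1=1/2$ yields the constants $5\sqrt{2}$ and $5$.
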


{\bf Comment \ref{B}.1.} Theorem \ref{BT2} gives a version of the Lebesgue-type inequality (see, for instance,  \cite{VTbookMA}, Section 8.7) for the algorithm $LS(\xi,\cX_v(\Phi_N))$. The algorithm $LS(\xi,\cX_v(\Phi_N))$ provides a $v$-term approximant with respect to $\Phi_N$. 
The best accuracy that we can achieve with $v$-term approximants is $\sigma_v(f,\Phi_N)_2$. The proper 
Lebesgue-type inequality would give
\be\label{B3}
 \|f-LS(\xi,\cX_v(\Phi_N))(f)\|_2 \le C' \sigma_v(f,\Phi_N)_2. 
 \ee
 Theorem \ref{BT2} gives  weaker inequalites (\ref{B2}) and (\ref{B2a}) than (\ref{B3}) with $\sigma_v(f,\Phi_N)_2$ 
 replaced by $\sigma_{v}(f,\Phi_N)_{L_2(\Omega,\mu_\xi)}$ or  $\sigma_v(f,\Phi_N)_\infty$. It is known that for many function classes the quantity 
 $\sigma_v(\bF,\Phi_N)_\infty$ is bounded above by $C(\log (v+1))^{1/2}\sigma_v(\bF,\Phi_N)_2$. 
 For instance, it is the case for the trigonometric system and $\bF=\bW^r_q$ (see \cite{VTbookMA}, 
 Theorems 9.2.11 (p.462) and 9.2.13 (p.464)). 
 
 Finally, we turn to the proof of Corollary  \ref{cor-1-1}.
 
 \begin{proof}[Proof of Corollary  \ref{cor-1-1}]
 	
 	By Theorem  \ref{IT4} Corollary \ref{cor-1-1} follows from the following Proposition \ref{BP1}.
	
\begin{Proposition}\label{BP1} Let $\bF$ and  $A_1^r(\Phi_N)$ be from Corollary \ref{cor-1-1} and let $\Phi_N$ satisfy (\ref{I9}). Then for any $\xi\in\Og^m$ we have 
 	\be\label{3-4}
 	\sa_{2v}(\bF, \Phi_N)_{L_2(\Og, \mu_\xi)}\leq  v^{-r-\f12}+  \dist(\bF, A_1^r(\Phi_N))_\infty.
 	 	\ee
\end{Proposition}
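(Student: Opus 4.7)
I would prove Proposition~\ref{BP1} by decomposing any $f\in\bF$ as a sup-norm near-miss by an element of $A_1^r(\Phi_N)$ plus a $2v$-term approximation of that element, where the second step is carried out by splitting at the index~$v$ and applying a Maurey--Jones--Barron selection to the tail.

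Fix $f\in\bF$ and $\varepsilon>0$, and choose $g=\sum_{j=1}^N c_j\ff_j\in A_1^r(\Phi_N)$ with $\|f-g\|_\infty\le \dist(\bF,A_1^r(\Phi_N))_\infty+\varepsilon$. Split $g=g_1+g_2$ with $g_1=\sum_{j\le v} c_j\ff_j\in\Sigma_v(\Phi_N)$ and $g_2=\sum_{j>v}c_j\ff_j$. From $\sum_j|c_j|j^r\le 1$ and $j^r\ge v^r$ for $j>v$, the $\ell^1$-norm of the tail satisfies $L:=\sum_{j>v}|c_j|\le v^{-r}$.

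The main step is to produce $\tilde g_2\in\Sigma_v(\Phi_N)$ spanned by some $\ff_j$'s with $j>v$ and satisfying $\|g_2-\tilde g_2\|_{L_2(\Og,\mu_\xi)}\le v^{-r-1/2}$. For this, regard $L_2(\Og,\mu_\xi)$ as a Hilbert space: $\mu_\xi$ is a probability measure, so (\ref{I9}) yields $\|\ff_j\|_{L_2(\mu_\xi)}\le\|\ff_j\|_\infty\le 1$. Since $g_2/L$ lies in the convex hull of $\{\sign(c_j)\ff_j:j>v\}$ (vectors of $L_2(\mu_\xi)$-norm at most $1$), Maurey's lemma supplies a $v$-term convex combination $(1/v)\sum_{i=1}^v \psi_i$ from this set with
\[
\bigl\|g_2/L-(1/v)\textstyle\sum_{i=1}^v\psi_i\bigr\|_{L_2(\mu_\xi)}\le 1/\sqrt{v}.
\]
Setting $\tilde g_2=(L/v)\sum_{i=1}^v\psi_i$ gives the desired $v$-term approximant, using at most $v$ distinct dictionary indices in $\{v+1,\dots,N\}$, with error $\le L/\sqrt{v}\le v^{-r-1/2}$.

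Putting the pieces together, $g_1+\tilde g_2\in\Sigma_{2v}(\Phi_N)$ (no index collision, since $g_1$ uses indices $\le v$ and $\tilde g_2$ uses indices $>v$), and the triangle inequality together with $\|h\|_{L_2(\mu_\xi)}\le\|h\|_\infty$ (valid because $\mu_\xi$ is a probability measure) gives
\[
\|f-g_1-\tilde g_2\|_{L_2(\mu_\xi)}\le\|f-g\|_\infty+\|g_2-\tilde g_2\|_{L_2(\mu_\xi)}\le \dist(\bF,A_1^r(\Phi_N))_\infty+\varepsilon+v^{-r-1/2}.
\]
Taking the supremum over $f\in\bF$ and letting $\varepsilon\to 0$ yields \eqref{3-4}. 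The only non-routine ingredient is Maurey's lemma; the Riesz-type condition (\ref{Riesz}) plays no role, consistent with the statement that only (\ref{I9}) is assumed. The potentially delicate point to verify is simply that Maurey's selection is valid in the Hilbert space $L_2(\Og,\mu_\xi)$ rather than $L_2(\Og,\mu)$, but this reduces to the pointwise bound $\|\ff_j\|_\infty\le 1$, which is exactly hypothesis (\ref{I9}).
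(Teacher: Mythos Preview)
Your proof is correct and follows essentially the same route as the paper's: split at index $v$, bound the tail $\ell^1$-norm by $v^{-r}$, and then produce a $v$-term Hilbert-space approximant of the tail with error $\le v^{-r-1/2}$. The only difference is cosmetic: the paper invokes a greedy-algorithm bound (Theorem~2.19 of \cite{VTbook}) in place of your Maurey--Jones--Barron lemma, but both tools yield the identical $v^{-1/2}$ rate for elements of the absolute convex hull of unit vectors in a Hilbert space.
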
 
\begin{proof}		
 	 Given $f\in \bF$, take any  $f_0=\sum_{j=1}^N c_j\ff_j  \in A_1^r(\Phi_N)$.   	Let $f_1:=\sum_{v< j\leq N}  c_j\ff_j$. Then 
 	\be\label{vr}
	\sum_{v< j\leq N} |c_j| \leq  v^{-r} \sum_{j=1}^N |c_j| j^r \le  v^{-r}.
	\ee
 	Since   $L_2(\Og, \mu_\xi)$ is a Hilbert space and by (\ref{I9}) 
 	\be\label{ub}
	\|\vi_j\|_{L_2(\Og, \mu_\xi)}\leq \sup_{\bx\in\Og} |\vi_j(\bx)|\leq 1,\   \ 1\leq j\leq N,
	\ee
 	using Theorem 2.19 of \cite[p. 93]{VTbook}, we deduce from \eqref{vr}
 	 that  there exists a $g_1\in\Sigma_v(\Phi_N)$ (provided by a greedy algorithm)
 	such that
 	\[\|f_1-g_1\|_{L_2(\Og, \mu_\xi)}\leq  v^{-r-\f12}.\]
 	Now setting 
 	\[ g:=\sum_{1\leq j\le v} c_j \vi_j +g_1,\]
 	we have 
 	\begin{align*}
 	\sa_{2v}(f,\Phi_N)_{L_2(\Og, \mu_\xi)}&\leq \|f-f_0\|_\infty+\|f_0-g\|_{L_2(\Og, \mu_\xi)} \\
 	&=
 	 \|f-f_0\|_\infty+\|f_1-g_1\|_{L_2(\Og, \mu_\xi)}\\
 	 &\leq \|f-f_0\|_\infty+ v^{-r-\f12} .
 	\end{align*}
	Taking infimum over $f_0 \in  A_1^r(\Phi_N)$ we obtain that for any $\xi\subset \Omega$ 
	$$
	\sa_{2v}(f,\Phi_N)_{L_2(\Og, \mu_\xi)} \le \dist(f, A_1^r(\Phi_N))_\infty + v^{-r-\f12}.
	$$
 	It remains to take supremum over all $f\in\bF$ and obtain \eqref{3-4}.
 \end{proof}
 Corollary \ref{cor-1-1} is proved.	
 \end{proof}
 
 \begin{Remark}\label{BR1} Let $\Phi_N$ satisfy (\ref{I9}). Then bound (\ref{ub}) and Theorem 2.19 of \cite[p. 93]{VTbook} imply for any $\xi\subset \Omega$
 $$
 \sa_{v}(A_1(\Phi_N), \Phi_N)_{L_2(\Og, \mu_\xi)}\leq  v^{-\f12},\quad A_1(\Phi_N):=A_1^0(\Phi_N).
 $$
 It is interesting to compare this bound with the known one from \cite{DeT} (see Theorem 3.1 there). It is proved in \cite{DeT} that if  
  $\Phi_N$ satisfies both (\ref{I9}) and the discretization condition: There exists a subset  $\xi= \{\xi^1,\dots,\xi^M\}\subset \Omega$
 such that for any $f\in [\Phi_N]:= \sp(\ff_1,\dots,\ff_N)$ the inequality
 $$
 \|f\|_\infty \le K_2\max_{1\le\nu\le M}|f(\xi^\nu)|
 $$
 holds, then we have
 $$
 \sa_{v}(A_1(\Phi_N), \Phi_N)_{\infty}\leq  CK_2v^{-\f12}(\log (1+M/v))^{\f12}
 $$
 with an absolute constant $C$.
 \end{Remark}
 
 \begin{proof}[Proof of Comment  \ref{I}.3] 	   We use
 	    a powerful result of B. Kashin on lower estimates of $m$-term approximation by orthogonal system (see Lemma \ref{2-6-ch2} of this paper). There exist a
 	universal constant $C_0\in (0,1)$ and a constant $c_r>0$ depending only on $r$ such that for any orthonormal  system $\Phi_N=\{\vi_j\}_{j=1}^N$, and
 	any integers   $0<v\leq C_0 N$ and $m\ge 1$, we have
 	\[ 	\varrho_{m,v}^{ls}(A_1^r(\Phi_N),\Phi_N,L_2 ) \ge c_r v^{-r-\f12}.\]
  To see this, let $v_1$ be the integer such that $C_0^{-1} v\leq v_1<C_0^{-1} v+1$, and let
 	$$A_\infty(\Phi_N,v_1):=\left\{ \sum_{j=1}^{v_1} a_j \vi_j:\  \ |a_j|\leq 1,\   \ 1\leq j\leq  v_1\right\}.$$
 	If the dictionary $\Phi_N$ is orthonormal, then Kashin's result (Lemma \ref{2-6-ch2}) implies that
 	\[  \sigma_v(A_\infty(\Phi_N,v_1),\Phi_N)_2\ge \f {\sqrt{3} } 2  \sqrt{v}.\]
 On the other hand, 	it is easily seen  that $ c_r' v^{-r-1} A_\infty(\Phi_N,v_1)\subset A_1^r(\Phi_N)$ for some constant $c_r'>0$.  Thus,  we obtain
 	$$
 	\varrho_{m,v}^{ls}(A_1^r(\Phi_N),\Phi_N,L_2 ) \ge  \sigma_v(A_1^r(\Phi_N),\Phi_N)_2
	$$
	$$
	\ge c_r' v^{-r-1} \sigma_v(A_\infty(\Phi_N,v_1),\Phi_N)_2 \ge c_r v^{-r-\f12}.
 	$$
 \end{proof}

 \section{Lower bounds for the best $v$-term approximations}
 \label{vt}
 
 In this section we illustrate how Theorem \ref{IT2} can be used to prove lower bounds for the best 
 $v$-term approximations with respect to a special system $\D_N$. 
 
 {\bf \ref{vt}.1. Some general lower bounds.} We begin with some results for general dictionaries $\D_N$. For a set of points $\xi =\{\xi^1,\ldots,\xi^m\}\subset \Omega$ denote
 $$
 \cZ(\xi) := \{f\in \cC(\Omega)\,:\, f(\xi^\nu)=0,\quad \nu=1,\dots,m\}.
 $$
 Note that if $f\in  \cZ(\xi)$, then for each finite dimensional space $X\subset \cC(\Og)$, we have
 $ LS(\xi, X)(f) =0$. 
 As a result, we deduce from  Theorem \ref{IT2} the following corollary. 
 
   \begin{Theorem}\label{vtT1}  Let $v$ and $N$ be given natural numbers such that $v\le N$.  Let $\D_N\subset \C(\Og)$ be  a dictionary of $N$ elements. Take a number 
  \begin{equation}\label{vt1}
  m\ge m(\cX_v(\D_N),C_1) 
  \end{equation}
  with some positive constant $C_1$. Assume that a set $\xi:= \{\xi^j\}_{j=1}^m \subset \Omega $ provides one-sided universal discretization with constant $C_1$ for the collection $\cX_v(\D_N)$. 
    Then for   any  function $ f \in  \cZ(\xi)$ we have
\be\label{vt2}
 \|f\|_{L_2(\Omega,\mu)} \le  (2C_1^{-1} +1) \sigma_v(f,\D_N)_\infty.
 \ee
\end{Theorem}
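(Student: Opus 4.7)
The plan is to apply Theorem \ref{IT2} directly to functions in $\cZ(\xi)$, using the observation already recorded in the paragraph preceding the statement.

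First I would verify carefully that for every subspace $L\in \cX_v(\D_N)$ and every $f\in\cZ(\xi)$, the least squares approximant satisfies $LS(\xi,L)(f)=0$. Indeed, since $f$ vanishes at each sampling point $\xi^\nu$, the sample vector $S(f,\xi)$ is the zero vector, so for every $u\in L$,
$$
\|S(f-u,\xi)\|_{2,\bw_m} = \|S(u,\xi)\|_{2,\bw_m}.
$$
Taking $u=0\in L$ gives the value $0$, which is the minimum, so we may choose $LS(\xi,L)(f)=0$. Consequently the nonlinear algorithm defined in (\ref{I4}) also returns $0$, i.e.\ $LS(\xi,\cX_v(\D_N))(f)=0$.

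Once this is in place, I would invoke Theorem \ref{IT2}, whose hypotheses coincide with those assumed here (the set $\xi$ provides one-sided universal discretization with constant $C_1$ for $\cX_v(\D_N)$, which is guaranteed by (\ref{vt1}) together with the hypothesis on $\xi$). Inequality (\ref{I6}) then yields
$$
\|f\|_{L_2(\Omega,\mu)} = \|f-LS(\xi,\cX_v(\D_N))(f)\|_2 \le (2C_1^{-1}+1)\sigma_v(f,\D_N)_\infty,
$$
which is precisely (\ref{vt2}).

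There is essentially no genuine obstacle: the content of the theorem is fully packaged inside Theorem \ref{IT2}, and the role of the hypothesis $f\in\cZ(\xi)$ is only to collapse the algorithm's output to $0$. The sole minor subtlety is that the $\operatorname{arg\,min}$ defining $LS(\xi,L)(f)$ need not be unique when $S(f,\xi)=0$, but the zero element is always a minimizer, so choosing it is legitimate and the argument goes through without modification.
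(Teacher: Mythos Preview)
Your proof is correct and follows exactly the approach the paper takes: the paper records just before the theorem that $LS(\xi,X)(f)=0$ for $f\in\cZ(\xi)$ and then deduces the statement as a direct corollary of inequality (\ref{I6}) in Theorem \ref{IT2}). Your added observation about non-uniqueness of the $\arg\min$ is handled automatically here, since the one-sided discretization assumption forces any $u\in L$ with $S(u,\xi)=0$ to satisfy $\|u\|_2=0$, so the minimizer is in fact unique.
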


Theorem \ref{vtT1} implies that for any function class $\bF\subset\cC(\Og)$, we have
\begin{equation}\label{vt3}
\tau_m(\bF)_2:=\inf_{\xi\in\Og^m} \sup_{f\in \bF\cap \cZ(\xi)} \|f\|_{L_2(\Og, \mu)} \leq (2C_1^{-1} +1) \sa_v (\bF,\D_N)_\infty
\end{equation}
whenever $m$ is a positive integer  satisfying \eqref{vt1}. Inequality (\ref{vt3}) indicates importance of the characteristic $\tau_m(\bF)_2$. We now prove a general lower bound for $\tau_m(\bF)_2$ under certain conditions on $\bF$. We begin with a conditional result. 

{\bf Condition D1}$(M,p)$. We say that an $D$-dimensional subspace $X_D$ of $\cC(\Og)$ satisfies condition {\bf D1}$(M,p)$ if there exists an $\bX=(\bx^1, \ldots, \bx^M)\in\Og^M$   such that  the inequalities 
	\be\label{vt4}
	\f 12 \|f\|_p^p \leq \|S(f,\bX)\|_p^p \leq \f 32 \|f\|_p ^p
	\ee
	and 
		\be\label{vt5}
		\f 12 \|f\|_2^2 \leq  \|S(f,\bX)\|_2^2\leq \f 32 \|f\|_2^2
		\ee
	hold simultaneously for all $f\in X_D$.   

\begin{Theorem}\label{vtT2} Let $2\le p<\infty$. Assume that $X_D$ satisfies condition {\bf D1}$(M,p)$. Let	$X_D^p:=\{ f\in X_D:\  \ \|f\|_p\leq 1\}$.
 Then  for any  integer  $1\leq m\leq D$ we have 
\begin{equation}\label{vt6}	
\tau_m(X_D^p)_2 \ge \frac{1}{3}\left(\frac{D-m}{M}\right)^{1/2}.
\ee
\end{Theorem}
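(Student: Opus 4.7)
The plan is to fix an arbitrary configuration $\xi=(\xi^1,\ldots,\xi^m)\in\Og^m$ and exhibit an $f\in X_D$ with $\|f\|_p\le 1$, $f(\xi^\nu)=0$ for all $\nu$, and $\|f\|_2\ge \tfrac13\sqrt{(D-m)/M}$; since $\xi$ is arbitrary, taking the infimum in $\xi$ then yields the bound on $\tau_m(X_D^p)_2$. Setting $V:=\{f\in X_D: f(\xi^\nu)=0,\ \nu=1,\ldots,m\}$ and $d:=D-m$, the subspace $V$ is cut out of $X_D$ by at most $m$ linear equations, so $\dim V\ge d$. The lower bound in \eqref{vt5} makes the sampling map $T\colon X_D\to\CC^M$, $T(f):=(f(\bx^\nu))_{\nu=1}^M$, injective on $X_D$, hence $W:=T(V)$ is a subspace of $\CC^M$ of dimension at least $d$; I equip $\CC^M$ throughout with the normalized $\ell_p$-norms $\|v\|_p:=(M^{-1}\sum_\nu|v_\nu|^p)^{1/p}$.

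For any $f\in V$, combining the lower bound in \eqref{vt4} with the upper bound in \eqref{vt5} gives $\|f\|_p\le 2^{1/p}\|T(f)\|_p$ and $\|f\|_2\ge \sqrt{2/3}\,\|T(f)\|_2$. With $g=T(f)\in W$ this yields
\[
\frac{\|f\|_2}{\|f\|_p}\ge \sqrt{2/3}\cdot 2^{-1/p}\cdot \frac{\|g\|_2}{\|g\|_p}\ge \frac{1}{\sqrt 3}\cdot \frac{\|g\|_2}{\|g\|_p},
\]
since $\sqrt{2/3}\cdot 2^{-1/p}\ge 1/\sqrt 3$ for $p\ge 2$. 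Thus it suffices to produce $g\in W$ with $\|g\|_2/\|g\|_p\ge \sqrt{d/M}$: combined with the factor $1/\sqrt 3 \ge 1/3$ this delivers the theorem.

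The crux is the finite-dimensional claim that every $d$-dimensional subspace $W\subset\CC^M$ (normalized norms, $p\ge 2$) contains some $g$ with $\|g\|_2/\|g\|_p\ge \sqrt{d/M}$. I would prove it probabilistically. Pick an orthonormal basis $\{u_k\}_{k=1}^d$ of $W$ in the normalized $\ell_2$-inner product and set $g=\sum_k \epsilon_k u_k$ with i.i.d.\ Rademacher signs $\epsilon_k$; orthonormality makes $\|g\|_2^2=d$ deterministic. Introducing the Christoffel-type kernel $\phi(i):=\sum_k|(u_k)_i|^2$, the normalization of each $u_k$ forces $\sum_i\phi(i)=dM$ and $\phi(i)\le M$. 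Coordinatewise Khintchine yields $\bE|g_i|^p\le K_p\phi(i)^{p/2}$, so
\[
\bE\|g\|_p^p\le \frac{K_p}{M}\sum_i\phi(i)^{p/2}\le \frac{K_p}{M}\cdot M^{(p-2)/2}\sum_i\phi(i)=K_p\,d\,M^{p/2-1}.
\]
Selecting a sign pattern realizing at most the expectation and invoking $d\le M$ gives $\|g\|_2/\|g\|_p\ge K_p^{-1/p}(d/M)^{1/2-1/p}\ge K_p^{-1/p}\sqrt{d/M}$.

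The main obstacle is eliminating the $p$-dependence hidden in Khintchine's constant: $K_p^{1/p}$ grows like $\sqrt p$, which is too weak to give the uniform constant $1/3$ in the statement. To secure the clean bound I would refine the extraction---for instance, replacing Rademacher signs by a vector $c$ uniformly distributed on the complex unit $\ell_2$-sphere of $\CC^d$ (so that $g=\sum_k c_k u_k$ has $\|g\|_2=1$ deterministically while $\bE|c_1|^p\le \Gamma(p/2+1)d^{-p/2}$ controls the coordinates), or by averaging $P_W\mathbf 1$ over unitary rotations inside $W$; the only structural inputs on $W$ in either variant are $\sum_i\phi(i)=dM$ and $\phi(i)\le M$, both automatic from the normalized orthonormality of $\{u_k\}$---itself available thanks to the two-sided discretization built into condition \textbf{D1}$(M,p)$.
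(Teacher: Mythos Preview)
Your reduction is sound and matches the paper's: pass to the sample image $W=T(V)\subset\CC^M$ with $\dim W\ge d:=D-m$, and show that $W$ contains a vector $g$ with $\|g\|_2/\|g\|_p\ge c\sqrt{d/M}$ for a $p$-independent constant $c$. The difficulty is exactly where you locate it, and unfortunately your proposed fixes do not close the gap. Averaging over the complex unit sphere of $\CC^d$ produces the same obstruction as Rademacher averaging: for $c$ uniform on the sphere one has $\bE|c_1|^p=\Gamma(p/2+1)\Gamma(d)/\Gamma(d+p/2)$, so $\bE|g_i|^p=\phi(i)^{p/2}\bE|c_1|^p$ still carries the factor $\Gamma(p/2+1)$, and after summing and taking $p$-th roots you again lose a factor of order $\sqrt{p}$. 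Any rotation-invariant average inside $W$ reduces to this computation, so the ``averaging $P_W\mathbf 1$ over unitaries'' idea meets the same fate. In short, probabilistic selection based only on the two constraints $\sum_i\phi(i)=dM$ and $\phi(i)\le M$ cannot beat the $\sqrt{p}$ loss.

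The paper sidesteps the averaging entirely by invoking a deterministic extremal fact due to Lorentz: any real $d$-dimensional subspace of $\RR^M$ contains a vector $g$ with $\|g\|_{\ell_\infty}=1$ that attains the value $\pm 1$ in at least $d$ coordinates (in the complex case one gets $\ge d$ coordinates with $|g_i|\ge 1/\sqrt 2$). For such a $g$ one has simultaneously $\|g\|_p\le 1$ (every coordinate is bounded by $1$, regardless of $p$) and $\|g\|_2^2\ge d/(2M)$, hence $\|g\|_2/\|g\|_p\ge \sqrt{d/(2M)}$ with no $p$-dependence at all. Plugging this into your own inequality $\|f\|_2/\|f\|_p\ge \tfrac{1}{\sqrt 3}\,\|g\|_2/\|g\|_p$ gives $\tau_m(X_D^p)_2\ge \tfrac{1}{\sqrt 6}\sqrt{(D-m)/M}\ge \tfrac13\sqrt{(D-m)/M}$. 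The moral is that the right tool here is an $\ell_\infty$-extremal selection (Lorentz), not a second-moment average; the $\ell_\infty$ control is precisely what makes the $\|g\|_p$ bound uniform in $p$.
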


{\bf Comment \ref{vt}.1.} We refer the reader to \cite{VTbookMA}, Section 3.2.6, p.85, for 
another powerful method of estimating $\tau_m(X_D^p)_2$, which is based on volume estimates.

For the proof of Theorem \ref{vtT2}, 
 we  need the following 
result of G.G.~Lorentz:
\begin{Lemma} \label{vtL1}\textnormal{\cite[Lemma 3.1, p.410]{LGM}}
	Given  an $n$-dimensional subspace $X$  of $\RR^m$,  there exists
	$(z_1, \dots, z_m)\in X$ such that $\max_{1\leq i\leq m} |z_i|=1$
	and $|\{ i:\  \  1\leq i\leq m, |z_i|=1\}|\ge n$.
\end{Lemma}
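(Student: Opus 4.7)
The plan is to realize the desired $(z_1,\dots,z_m)$ as a nonzero extreme point of the compact convex symmetric set $K := X \cap [-1,1]^m$, i.e., the $\ell_\infty$-unit ball of $X$ viewed inside $\RR^m$. Since $X$ is $n$-dimensional with $n\ge 1$ (the case $n=0$ is vacuous), any nonzero vector in $X$ can be scaled into $K$, so $K\supsetneq\{0\}$; by Krein--Milman (or just compactness plus a supporting-hyperplane argument in the finite-dimensional space $X$) there exists a nonzero extreme point $z\in K$. This $z$ will be the candidate.

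I will then verify the two required conclusions separately. First, $\max_i|z_i|=1$ is automatic for a \emph{nonzero} extreme point: if instead $\|z\|_\infty<1$, then $z=\tfrac{1}{2}\bigl[(1+\varepsilon)z+(1-\varepsilon)z\bigr]$ with both $(1\pm\varepsilon)z\in K$ for all sufficiently small $\varepsilon>0$, contradicting extremality. Second, set $B:=\{i:|z_i|=1\}$ and consider $X_0:=\{w\in X:w_i=0\text{ for all }i\in B\}$, which is the kernel of the coordinate-restriction map $X\to\RR^B$, $w\mapsto(w_i)_{i\in B}$. If $\dim X_0\ge 1$, pick a nonzero $w\in X_0$; since $|z_i|<1$ for $i\notin B$ and the $B$-coordinates of $z\pm\varepsilon w$ coincide with those of $z$, for sufficiently small $\varepsilon>0$ both $z\pm\varepsilon w$ lie in $K$, and writing $z$ as their midpoint again contradicts extremality. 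Hence $X_0=\{0\}$, the coordinate-restriction map is injective, and $|B|\ge\dim X=n$.

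Since no discretization or quantitative estimate is involved, there is no substantive analytic obstacle; the only points requiring care are (i) securing the existence of a \emph{nonzero} extreme point, so that $\max_i|z_i|=1$ rather than $0$, and (ii) setting up the perturbation direction inside the face of $K$ cut out by the active constraints $\{i\in B\}$. Both are standard facts about vertices of centrally symmetric polytopes, so once this setup is in place the verification is essentially mechanical.
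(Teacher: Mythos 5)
Your argument is correct and complete: the nonzero extreme point of $K=X\cap[-1,1]^m$ exists because $K\neq\{0\}$, the scaling argument forces $\max_i|z_i|=1$, and the perturbation within the face cut out by the active coordinates shows the restriction map $X\to\RR^B$ is injective, giving $|B|\ge n$. The paper itself gives no proof of this lemma --- it is quoted from \cite[Lemma 3.1, p.~410]{LGM} --- and your extreme-point argument is precisely the standard proof found there, so there is nothing to compare beyond noting that you have supplied the omitted details correctly.
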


	Clearly,   Lemma \ref{vtL1} 	 implies  that for each $n$-dimensional complex  subspace $X$  of $\CC^m$,  there exists
	$(z_1, \dots, z_m)\in X$ such that $\max_{1\leq i\leq m} |z_i|=1$
	and $|\{ i:\  \  1\leq i\leq m, |z_i|\ge \f 1{\sqrt{2}}\}|\ge n$.

\begin{proof}[Proof of Theorem \ref{vtT2}]     
 
For an arbitrarily given    $\xi=(\xi^1, \ldots, \xi^m)\in\Og^m$ with $1\leq m\leq D$,  
	let $ V:= X_D\cap \cZ(\xi)$, and 
	$Y:=\{ S(f,\bX):\  \ f\in V\}$, where $\bX$ is from {\bf D1}$(M,p)$. Clearly,
	\[ \dim Y=\dim V\ge D-m .\]
Thus, 	by   Lemma \ref{vtL1},  there exists  $f_0\in V$ such that 
	$\|S(f_0,\bX)\|_\infty =1$ and $|I(f_0)|\ge   D-m$, where 
	$$
	I(f_0):=\Bl\{i\in  \NN:\   \ 1\leq i\leq M,\   |f_0(\bx^i)|\ge 1/\sqrt{2}\Br\}.
	$$ 
	Using \eqref{vt4} and \eqref{vt5}, we obtain 
	\[ \|f_0\|_p^p \leq 2 \|S(f,\bX)\|_p^p =  \f 2 M \sum_{j=1}^M |f_0(\bx^j)|^p\leq  2,\]
	and 
	\begin{align*}
	\|f_0\|_2^2  \ge \f 23 \|S(f,\bX)\|_2^2  \ge  \f 23  \cdot \f 1M \sum_{j\in I(f_0) } |f_0(\bx^j)|^2\ge\f 13 \f{|I(f_0)|}{M}\ge \f{D-m }{3M}.
	\end{align*}
	
Finally, setting $g=2^{-1/p} f_0$, we have  $g\in X_D^p\cap \cZ(\xi)$ and 
	\[ \|g\|_2^2 \ge \frac{2}{3}  \|S(g, \bX)\|_2^2\ge  2^{-2/p} \frac{2}{3}\f{D-m }  {3M},\]
	which completes the proof.
	
\end{proof}

A combination of Theorem \ref{vtT1} and Theorem \ref{vtT2} gives the following result.

\begin{Theorem}\label{vtT3} Let $2\le p<\infty$. Assume that $X_D$ satisfies condition {\bf D1}$(M,p)$. Let	$X_D^p:=\{ f\in X_D:\  \ \|f\|_p\leq 1\}$. For a dictionary $\D_N$ and a positive constant $C_1$ suppose that $v$ is such that $m(\cX_v(\D_N),C_1)<D$.  
 Then we have 
\begin{equation}\label{vt7}	
  \frac{1}{3}\left(\frac{D-m(\cX_v(\D_N),C_1)}{M}\right)^{1/2} \le (2C_1^{-1} +1) \sigma_v(X_D^p,\D_N)_\infty.
\ee
\end{Theorem}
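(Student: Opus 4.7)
The proof is a direct synthesis of Theorem \ref{vtT1} and Theorem \ref{vtT2}, obtained by choosing $m$ to be the borderline value $m(\cX_v(\D_N),C_1)$ that appears in the hypothesis. My plan is as follows.

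First, set $m := m(\cX_v(\D_N),C_1)$. The hypothesis $m(\cX_v(\D_N),C_1) < D$ together with $m\ge 1$ ensures $1\le m\le D$, so the range requirement of Theorem \ref{vtT2} is satisfied. Applying Theorem \ref{vtT2} with this $m$ (and using the standing assumption that $X_D$ satisfies condition \textbf{D1}$(M,p)$) yields the lower bound
$$
\tau_m(X_D^p)_2 \;\ge\; \frac{1}{3}\left(\frac{D-m}{M}\right)^{1/2}.
$$

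Second, derive a matching upper bound on $\tau_m(X_D^p)_2$ from Theorem \ref{vtT1}. By Definition \ref{ID1}, the choice $m = m(\cX_v(\D_N),C_1)$ means that there exists a set $\xi\in\Og^m$ providing one-sided universal discretization with constant $C_1$ for the collection $\cX_v(\D_N)$, which is exactly the hypothesis of Theorem \ref{vtT1}. Thus inequality \eqref{vt3} (the class-level reformulation of \eqref{vt2}) applies with $\bF = X_D^p$, giving
$$
\tau_m(X_D^p)_2 \;\le\; (2C_1^{-1}+1)\,\sigma_v(X_D^p,\D_N)_\infty.
$$
Here $X_D^p$ is the unit ball of a finite-dimensional subspace of $\cC(\Og)$, hence a compact subset of $\cC(\Og)$, so the hypothesis of Theorem \ref{vtT1} is met.

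Chaining the displayed inequalities in the two preceding paragraphs delivers the claim \eqref{vt7}. There is no real obstacle in this argument: the work has already been absorbed by the two earlier theorems, whose ingredients (Lorentz's selection lemma in combination with condition \textbf{D1}$(M,p)$ on the lower-bound side, and the one-sided universal discretization estimate for the least squares recovery on the upper-bound side) do all the heavy lifting. The only thing to verify by hand is the matching of parameters, i.e. that the same $m$ is admissible in both theorems, which is exactly what the condition $m(\cX_v(\D_N),C_1) < D$ guarantees.
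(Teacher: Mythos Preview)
Your proof is correct and is precisely the combination the paper intends: the paper states Theorem \ref{vtT3} as an immediate consequence of Theorems \ref{vtT1} and \ref{vtT2}, and you have spelled out that combination correctly by fixing $m=m(\cX_v(\D_N),C_1)$, applying Theorem \ref{vtT2} for the lower bound on $\tau_m(X_D^p)_2$, and \eqref{vt3} for the upper bound. The remark about compactness of $X_D^p$ is harmless but unnecessary, since \eqref{vt3} only requires $\bF\subset\cC(\Og)$.
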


Theorem \ref{vtT3} is a conditional result. In order to apply it we need to check that the subspace $X_D$ of interest satisfies the discretization condition {\bf D1}$(M,p)$. Also, for a given dictionary
$\D_N$ we need to estimate the universal discretization characteristic $m(\cX_v(\D_N),C_1)$. 
We now illustrate on a few examples how Theorem \ref{vtT3} works. We assume that the subspace 
$X_D$ satisfies the Nikol'skii inequality assumption
\be\label{NI}
\|f\|_\infty \le (K_1D)^{1/p} \|f\|_p,\quad \forall f \in X_D.
\ee

{\bf The case $p=2$.} It is proved in \cite{LimT} that any subspace $X_D$ satisfying (\ref{NI}) with $p=2$ has {\bf D1}$(M,2)$ property with some $M \le C(K_1)D$. Let us consider a dictionary 
$\D_N = \Phi_N$ satisfying (\ref{I9}) and (\ref{Riesz}). Then by Theorem 
\ref{BT1} we get
$$
m(\cX_v(\Phi_N),1/2) \le C(K) v \log N\cdot  \log^2(2v)(\log (2v)+\log\log N).
$$
Let $v(D,N)\in\N$ be the largest $v$ satisfying
$$
C(p,K) v \log N\cdot  \log^2(2v)(\log (2v)+\log\log N) \le D/2.
$$
Then Theorem \ref{vtT3} guarantees that 
$$
\sigma_{v(D,N)}(X_D^2,\D_N)_\infty \ge c_0
$$
with a positive absolute constant $c_0$. In the case $\log N$ is of the same order as $\log D$ 
the number $v(D,N)$ is of the order $D(\log D)^{-4}$. 

The reader can compare this result with a powerful result by B.S. Kashin formulated below (see Lemma \ref{2-6-ch2}).

{\bf The case $2<p<\infty$.} It is proved in \cite[Theorem A]{Kos} and \cite[Theorem 2.2]{DT2} that any subspace $X_D$ satisfying (\ref{NI}) with $2<p<\infty$ has {\bf D1}$(M,p)$ property with some $M \le C(K_1,p) D(\log  D)^{\al_p}$, $\al_p :=\min\{p, 3\}$. Let us consider a dictionary 
$\D_N = \Phi_N$ satisfying (\ref{I9}) and (\ref{Riesz}).  
Let $v(D,N)\in\N$ be as above.
 
Then Theorem \ref{vtT3} guarantees that 
$$
\sigma_{v(D,N)}(X_D^p,\D_N)_\infty \ge \f {c_1} {(\log  D)^{\al_p/2}}
$$
with a positive constant $c_1 = c_1(K_1,p)$. In the case $\log N$ is of the same order as $\log D$ 
the number $v(D,N)$ is of the order $D(\log D)^{-4}$.

{\bf \ref{vt}.2. Connection to numerical integration.} We now use Theorem \ref{vtT1} to prove an inequality between $\sigma_v(\bF,\Phi_N)_\infty$ and 
best error of numerical integration for the class $\bF$. We introduce some notation. Let $\Omega$ be a compact subset of $\R^d$ and $\mu$ be a probability measure on $\Omega$. Denote
$$
I_\mu(f) := \int_\Omega fd\mu.
$$
For a function class $\bF\subset \C(\Omega)$ consider the best error of numerical integration by cubature formulas with $m$ knots:
$$
\kappa_m(\bF) := \inf_{\xi^1,\dots,\xi^m;\lambda_1,\dots,\lambda_m} \sup_{f\in \bF} \left|I_\mu(f) - \sum_{j=1}^m \ld_\nu f(\xi^\nu)\right|,
$$
where $\inf$ is taken over all sets $\{\xi^1,\dots,\xi^m\}\subset \Omega$ and all sets of numbers $\{\lambda_1,\dots,\lambda_m\}$. 
Clearly,
\be \label{vt9} \k_m (\bF) \ge \inf_{\xi\in\Og^m} \sup_{f\in \bF\cap\cZ(\xi)} |I_\mu(f)|.\ee
In particular, \eqref{vt6} gives a lower bound of $\sqrt{\k_m (\bF)}$ for the class of functions $\bF:=\{ |f|^2:\  \ f\in X_N,\  \ \|f\|_p\leq 1\}$.

We now consider special classes $\bF$, which we denote by $W$, and for which we actually have equality in \eqref{vt9}. 
 Let $V\subset \cC(\Omega)$ be a Banach space and $W:=\{f:\|f\|_V\le 1\}$ be its unit ball. We begin with a simple lemma.

\begin{Lemma}\label{vtL2} For the function class $W$ given above, and any integer $m\ge 1$,   we have
\be\label{vt10}
\inf_{\xi\in\Og^m} \sup_{f\in W\cap \cZ(\xi) } |I_\mu(f)| = \kappa_m(W).
\ee
\end{Lemma}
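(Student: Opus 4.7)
My plan is to establish the two inequalities in \eqref{vt10} separately. The direction $\inf_\xi \sup_{f\in W\cap\cZ(\xi)}|I_\mu(f)|\le \kappa_m(W)$ is already the content of \eqref{vt9}: any cubature formula with nodes $\xi=(\xi^1,\dots,\xi^m)$ annihilates every $f\in\cZ(\xi)$, so its error on such an $f$ reduces to $|I_\mu(f)|$, and taking the infimum over $\xi$ yields the bound. Only the reverse inequality requires work.

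For that, I would fix $\xi\in\Og^m$ and set $A(\xi):=\sup_{f\in W\cap\cZ(\xi)}|I_\mu(f)|$. It suffices to exhibit weights $\ld_1,\dots,\ld_m$ with $\sup_{f\in W}|I_\mu(f)-\sum_\nu \ld_\nu f(\xi^\nu)|\le A(\xi)$, for then $\kappa_m(W)\le A(\xi)$ and taking the infimum over $\xi$ closes the loop. To produce the weights I would use a standard Hahn--Banach duality. Since $V\hookrightarrow\cC(\Og)$ continuously, the point evaluations $\de_{\xi^\nu}$ and the functional $I_\mu$ are bounded linear functionals on $V$. Let $Z:=\spn\{\de_{\xi^1},\dots,\de_{\xi^m}\}\subset V^*$; this is a finite-dimensional, hence weak-$\ast$ closed, subspace whose preannihilator in $V$ equals $V\cap\cZ(\xi)$.

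The heart of the argument is then a bipolar-duality step. The restriction $I_\mu|_{V\cap\cZ(\xi)}$ has norm precisely $A(\xi)$; by Hahn--Banach, extend it to $\tilde I\in V^*$ with $\|\tilde I\|_{V^*}=A(\xi)$. Since $I_\mu-\tilde I$ vanishes on $V\cap\cZ(\xi)=Z^\perp$, the bipolar identity $(Z^\perp)^\perp=Z$ gives $I_\mu-\tilde I=\sum_\nu \ld_\nu \de_{\xi^\nu}$ for some scalars $\ld_\nu$, and these are the desired weights: the cubature-error functional $f\mapsto I_\mu(f)-\sum_\nu \ld_\nu f(\xi^\nu)$ coincides with $\tilde I$ on $V$, so its supremum on $W$ is exactly $A(\xi)$. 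The main technical point I would expect to need care is the identity $(Z^\perp)^\perp=Z$; it holds here because $Z$ is finite-dimensional and therefore weak-$\ast$ closed in $V^*$. Everything else---boundedness of $\de_{\xi^\nu}$ and of $I_\mu$ on $V$ and the Hahn--Banach extension---is entirely standard once the continuous embedding $V\hookrightarrow\cC(\Og)$ is in place.
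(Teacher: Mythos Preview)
Your proof is correct and follows essentially the same route as the paper's. The paper invokes the Nikol'skii duality theorem to obtain $\sup_{f\in W\cap\cZ(\xi)}|I_\mu(f)|=\inf_{\lambda}\bigl\|I_\mu-\sum_\nu\lambda_\nu\delta_{\xi^\nu}\bigr\|_{V'}$ and then bounds the right-hand side below by $\kappa_m(W)$; you simply unpack that duality via Hahn--Banach and the bipolar identity $(Z^\perp)^\perp=Z$ for the finite-dimensional subspace $Z=\spn\{\delta_{\xi^\nu}\}$, which is exactly the standard proof of that theorem.
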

\begin{proof} By \eqref{vt9}, it is enough to show that for any $\xi\in\Og^m$, 
\[\sup_{f\in W\cap \cZ(\xi) } |I_\mu(f)| \ge  \kappa_m(W).\]
Indeed, 	by  the Nikol'skii duality theorem (see, for instance, \cite{VTbookMA}, p.509) we find
\be\label{vt11}
  \sup_{f\in W\cap \cZ(\xi)} |I_\mu(f)| = \inf_{\lambda_1,\dots,\lambda_m}   \left\|I_\mu - \sum_{\nu=1}^m \lambda_\nu \delta_{\xi^\nu}\right\|_{V'} 
\ee
where $V'$ is a dual (conjugate) to the  Banach space $V$  and $\delta_{\xi^\nu}$ are the Dirac delta functions. 
For each $\lambda_1,\dots,\lambda_m$ we have 
\be\label{vt12}
 \left\|I_\mu - \sum_{\nu=1}^m \lambda_\nu \delta_{\xi^\nu}\right\|_{V'} = \sup_{f\in W} \left|I_\mu(f) - \sum_{\nu=1}^m \lambda_\nu f(\xi^\nu)\right|.
\ee
It follows from the definition of $\kappa_m(W)$   that
for any $\xi$  and each $\lambda_1,\dots,\lambda_m$
\be\label{vt13}
\sup_{f\in W} \left|I_\mu(f) - \sum_{\nu=1}^m \lambda_\nu f(\xi^\nu)\right| \ge \kappa_m(W).
\ee
Combining (\ref{vt11})-(\ref{vt13}) we complete the proof.
\end{proof}

 \begin{Theorem}\label{vtT4} Let $m$, $v$, $N$ be given natural numbers such that $v\le N$.  Assume that a dictionary $\D_N$ is such that there exists a set $\xi:= \{\xi^j\}_{j=1}^m \subset \Omega $, which provides {\it one-sided universal discretization} with constant $C_1$ for the collection $\cX_v(\D_N)$. Suppose that $W$ is the unit ball of a Banach space. Then  we have
\be\label{vt14}
  \kappa_m(W) \le (2C_1^{-1} +1) \sigma_v(W,\D_N)_\infty.
 \ee
 \end{Theorem}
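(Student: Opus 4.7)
The plan is to combine Lemma \ref{vtL2}, which gives the duality identity
$$
\kappa_m(W) = \inf_{\xi\in\Og^m}\sup_{f\in W\cap\cZ(\xi)} |I_\mu(f)|,
$$
with Theorem \ref{vtT1}, which controls the $L_2(\Og,\mu)$-norm of any continuous function that vanishes on a universal discretization set in terms of its best $v$-term approximation in the uniform norm.

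First, I would fix the specific set $\xi = \{\xi^j\}_{j=1}^m$ supplied by the hypothesis, so that $\xi$ provides one-sided universal discretization with constant $C_1$ for the collection $\cX_v(\D_N)$. For any $f \in W\cap\cZ(\xi)$, Theorem \ref{vtT1} applies and yields
$$
\|f\|_{L_2(\Og,\mu)} \le (2C_1^{-1}+1)\,\sigma_v(f,\D_N)_\infty \le (2C_1^{-1}+1)\,\sigma_v(W,\D_N)_\infty,
$$
where the last inequality uses that $f\in W$ and the definition of $\sigma_v(W,\D_N)_\infty$ as a supremum over $W$.

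Second, since $\mu$ is a probability measure, the Cauchy--Schwarz inequality gives $|I_\mu(f)| \le \|f\|_{L_1(\Og,\mu)} \le \|f\|_{L_2(\Og,\mu)}$, so for every $f\in W\cap\cZ(\xi)$,
$$
|I_\mu(f)| \le (2C_1^{-1}+1)\,\sigma_v(W,\D_N)_\infty.
$$
Taking the supremum over $f\in W\cap\cZ(\xi)$ preserves this bound. Finally, the infimum in Lemma \ref{vtL2} is no larger than the value attained at our specific $\xi$, and combining the two yields the desired inequality \eqref{vt14}.

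There is no real obstacle here: the theorem is obtained by chaining the two main ingredients already established. The only small point worth being careful about is the passage from $|I_\mu(f)|$ to $\|f\|_{L_2(\Og,\mu)}$, which relies on $\mu(\Og)=1$ and would fail for a non-probability measure; but this normalization is already part of the standing setup of the paper.
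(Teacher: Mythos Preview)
Your proof is correct and follows essentially the same route as the paper's own argument: apply Theorem~\ref{vtT1} to bound $\|f\|_{L_2(\Omega,\mu)}$ for $f\in W\cap\cZ(\xi)$, pass from $\|f\|_{L_2}$ to $\|f\|_{L_1}$ to $|I_\mu(f)|$, and then invoke Lemma~\ref{vtL2}.
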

\begin{proof} By Theorem \ref{vtT1} we obtain 
\be\label{vt15}
   (2C_1^{-1} +1) \sigma_v(W,\D_N)_\infty \ge \sup_{f\in W\cap \cZ(\xi)} \|f\|_{L_2(\Omega,\mu)}.
 \ee
   Further
 $$
  \sup_{f\in W\cap \cZ(\xi)} \|f\|_{L_2(\Omega,\mu)} \ge  \sup_{f\in W\cap\cZ(\xi)} \|f\|_{L_1(\Omega,\mu)} \ge  \sup_{f\in W\cap \cZ(\xi)} |I_\mu(f)|.
  $$
  It remains to apply Lemma \ref{vtL2}.

\end{proof}

 As a corollary of Theorems \ref{vtT4} and \ref{BT1} we obtain the following result.
 
  \begin{Theorem}\label{vtT5} Let as above $W$ be the unit ball of a Banach space. 
  	Given a number $K>0$, there exists a constant $C_K>0$ such that for any   compact subset $\Omega$  of $\R^d$, any probability measure $\mu$ on it,   for any integer $1\leq v\leq N$, and for 
  	any dictionary $\Phi_N$  satisfying both  \eqref{I9} and \eqref{Riesz} for the constant $K$,  	
  	we have
\be\label{vt16}
\kappa_m(W) \le 5 \sigma_v(W,\Phi_N)_\infty,
 \ee
 provided
  $$m\ge C_Kv(\log N) (\log(2v))^2 (\log (2v)+\log\log N). $$

\end{Theorem}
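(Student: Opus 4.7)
The plan is to simply combine Theorem \ref{vtT4} with the universal discretization result Theorem \ref{BT1}, and check that the numerical constants line up to give the factor $5$ in the statement.

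First I would invoke Theorem \ref{BT1} with $p=2$: since $\Phi_N$ is assumed to satisfy \eqref{I9} and \eqref{Riesz} with constant $K$, that theorem yields a constant $C=C(2,K) = C_K$ and, for every pair $1\le v\le N$, a point set $\xi=\{\xi^j\}_{j=1}^m\subset\Omega$ of cardinality
\[
m \le C_K\, v\,(\log N)\,\log^2(2v)\,\bigl(\log(2v)+\log\log N\bigr)
\]
for which \eqref{1.4u} holds for every $f\in\Sigma_v(\Phi_N)$. In particular, the lower half of \eqref{1.4u} reads
\[
\tfrac12\|f\|_2^2 \le \frac{1}{m}\sum_{j=1}^m |f(\xi^j)|^2, \qquad f\in \Sigma_v(\Phi_N)=\bigcup_{L\in\cX_v(\Phi_N)} L,
\]
which is exactly the one-sided universal discretization condition \eqref{I3} for the collection $\cX_v(\Phi_N)$ with constant $C_1=\tfrac12$. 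Hence, for every $m$ at least as large as the bound above, a qualifying set $\xi$ exists, and the hypothesis of Theorem \ref{vtT4} is satisfied with $C_1=\tfrac12$.

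Next I would apply Theorem \ref{vtT4} to the unit ball $W$ of the Banach space $V\subset \cC(\Omega)$ with this choice of $\xi$ and $C_1=\tfrac12$. That theorem immediately gives
\[
\kappa_m(W) \le (2C_1^{-1}+1)\,\sigma_v(W,\Phi_N)_\infty = 5\,\sigma_v(W,\Phi_N)_\infty,
\]
which is precisely \eqref{vt16}. Enlarging the constant $C_K$ from Theorem \ref{BT1} if necessary (to absorb any dependence on $K$ that is not already present), this works for every $m$ satisfying the stated lower bound, completing the proof.

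There is essentially no obstacle here: the content has been done in Theorems \ref{vtT4} and \ref{BT1}, and the only arithmetic check is that $2C_1^{-1}+1=5$ when $C_1=\tfrac12$. The only care needed is to note that Theorem \ref{vtT4} only requires the \emph{existence} of one such $\xi$, so the conclusion holds for all $m$ above the universal discretization threshold, not just for the minimal $m$.
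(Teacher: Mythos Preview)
Your proposal is correct and follows exactly the route the paper takes: the paper states Theorem \ref{vtT5} simply ``as a corollary of Theorems \ref{vtT4} and \ref{BT1}'', and your argument spells out precisely this combination, including the arithmetic $2C_1^{-1}+1=5$ for $C_1=\tfrac12$. There is nothing to add.
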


{\bf Comment \ref{vt}.2.} It might be interesting to compare Theorems \ref{vtT5} and \ref{vtT4} with some known results. The following inequality was proved in \cite{No} (see also \cite{NoLN})
\be\label{vt17}
\kappa_m(\bF) \le 2d_m(\bF,L_\infty).
\ee
The following Theorem \ref{vtT6} was proved in \cite{VT191}. We recall some definitions. 

{\bf Property A.} For any $f\in \bF$ we have $ (f+1)/2 \in \bF$ and $  (f-1)/2 \in \bF$.

For a compact subset $\Theta$ of a Banach space $X$ we define the entropy numbers as follows
$$
\e_n(\Theta,X) := \inf\{\e: \exists f_1,\dots,f_{2^n}\in \Theta: \Theta\subset \cup_{j=1}^{2^n} (f_j+\e U(X))\}
$$
where $U(X)$ is the unit ball of a Banach space $X$.

\begin{Theorem}\label{vtT6} Assume that a class of real functions   $\bF\subset \cC(\Omega)$ has Property A and is such that for all $f\in \bF$ we have $\|f\|_\infty \le M$ with some constant $M$. Also assume that the entropy numbers of $\bF$ in the uniform norm $L_\infty$ satisfy the condition
$$
  \e_n(\bF,L_\infty) \le n^{-r} (\log (n+1))^b,\qquad r\in (0,1/2),\quad b\ge 0.
$$
Then
$$
\kappa_m(\bF)      \le C(M,r,b)m^{-r} (\log (m+1))^b.
$$
\end{Theorem}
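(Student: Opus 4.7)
The plan is to construct the cubature probabilistically and estimate the supremum error by a chaining argument based on the assumed entropy bound, using Bernstein's inequality to exploit variance information at every level. Concretely, I would draw knots $\xi^1,\dots,\xi^m$ i.i.d.\ from $\mu$ with Monte Carlo weights $\la_\nu=1/m$, set $Z_f:=I_\mu(f)-\frac1m\sum_{\nu=1}^m f(\xi^\nu)$, and then show that $\sup_{f\in\bF}|Z_f|\le C(M,r,b)\,m^{-r}(\log(m+1))^b$ holds with positive probability, from which the existence of a deterministic cubature with the claimed error follows.

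For every $n\ge 0$ fix an $\e_n$-net $\bF_n$ for $\bF$ in $L_\infty$ with $|\bF_n|\le 2^n$ and $\e_n\le n^{-r}(\log(n+1))^b$; for each $f\in\bF$ and each $n$, let $f_n\in\bF_n$ be a nearest element. For a cutoff $n_0$ to be chosen, write $f=f_{n_0}+\sum_{n>n_0}(f_n-f_{n-1})$. The residuals satisfy $\|f_n-f_{n-1}\|_\infty\le C\e_n$, so their variances are at most $C\e_n^2$. A union bound over the finite set $\bF_{n_0}$ combined with Hoeffding's inequality yields the coarse-scale estimate $\sup_{g\in\bF_{n_0}}|Z_g|\le CM\sqrt{n_0/m}$; at each finer scale, Bernstein's inequality applied to at most $2^{2n}$ pairs gives, with large probability, $|Z_{f_n-f_{n-1}}|\le C\e_n\sqrt{n/m}+Cn/m$.

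The main obstacle is to avoid the $\sqrt{n/m}$ loss at each scale, which would otherwise force the naive balance $\e_n\sim\sqrt{n/m}$ and yield only the weaker rate $m^{-r/(2r+1)}$. This is where Property~A enters in an essential way: iterating the maps $f\mapsto(f\pm 1)/2$ shows that $\bF$ is stable under dyadic contractions and translations by dyadic rationals, so every chaining residual can, after rescaling by an appropriate dyadic factor, be compared to an element of $\bF$ whose a priori variance is again controlled by the entropy at a deeper level. Bootstrapping this observation produces a geometrically decaying sequence of tail bounds whose sum, for $r<1/2$, is dominated by its leading term of order $\e_{n_0}$; choosing $n_0\sim\log m$ so that $\e_{n_0}$ is of order $m^{-r}(\log m)^b$ then delivers the stated bound. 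Making this bootstrap fully rigorous, in particular matching the constants so that Property~A actually compensates for the $\sqrt{n/m}$ loss, is the hard technical step; once it is in place, the remainder is a routine chaining computation in the spirit of \cite{VT191}.
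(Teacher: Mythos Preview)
First, a framing remark: the present paper does not prove Theorem~\ref{vtT6}. It is quoted from \cite{VT191} as background for the comparison in Comment~4.2, so there is no in-paper proof to compare your attempt against.

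On its own merits, your outline correctly isolates both the natural probabilistic setup (i.i.d.\ sampling with equal weights) and the real obstruction: for $r\in(0,1/2)$ the Dudley--Bernstein chaining sum $\sum_n \e_n\sqrt{n/m}$ diverges, and the single-scale net argument balances only to the rate $m^{-r/(1+2r)}$ rather than $m^{-r}$. The genuine gap is the paragraph in which you invoke Property~A. Iterating $f\mapsto(f\pm1)/2$ produces elements of $\bF$ of the form $2^{-k}f+c$ with $c$ a dyadic rational --- that is, \emph{contractions} of a single function shifted by constants. It does not show that a chaining increment $f_n-f_{n-1}$, which is a \emph{difference} of two elements of $\bF$, can be rescaled back into $\bF$ or into a class whose entropy is controlled at a deeper scale. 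In particular, nothing in Property~A lets you upgrade the variance bound for $Z_{f_n-f_{n-1}}$ beyond the trivial $C\e_n^2$, so the $\e_n\sqrt{n/m}$ term in Bernstein's inequality does not shrink. You acknowledge this yourself (``the hard technical step''); as written, the argument stalls at $m^{-r/(1+2r)}$ and does not reach the claimed $m^{-r}(\log(m+1))^b$. To close the gap you would need to consult the actual argument in \cite{VT191}; Property~A is used there, but not via the linkwise rescaling of chaining residuals that you sketch.
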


 The following known result gives a connection between entropy numbers and best sparse approximations (see, for instance, \cite{VTbookMA}, p.331, Theorem 7.4.3). 
 
 \begin{Theorem}\label{vtT7} Let a compact subset $\bF$ of a Banach space  $X$ be such that for a dictionary $\D_N$, $|\D_N|=N$, and  a number $r>0$ we have
$$
  \sigma_m(\bF,\D)_X \le m^{-r},\quad m\le N.
$$
Then for $n\le N$
\begin{equation}\label{vt18}
\e_n(\bF,X) \le C(r) \left(\frac{\log(2N/n)}{n}\right)^r.
\end{equation}
\end{Theorem}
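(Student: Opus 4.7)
The plan is to combine the sparse approximation hypothesis with a standard volumetric covering of the set $\Sigma_m(\D_N)$ of $m$-sparse elements. Fix $n\le N$ and introduce an auxiliary parameter $m\le N$ to be optimized at the end. Since $\bF$ is a compact subset of $X$, it is contained in some ball $R_0 U(X)$. For each $f\in\bF$, the hypothesis supplies an approximant $g_f\in\Sigma_m(\D_N)$ with $\|f-g_f\|_X\le m^{-r}$, so $\|g_f\|_X\le R_0+1=:R$.

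Next I bound the covering numbers of $\Sigma_m(\D_N)\cap R\,U(X)$. This set is the union, over the $\binom{N}{m}$ choices of $J\subset\{1,\dots,N\}$ with $|J|=m$, of the pieces $V_J\cap R\,U(X)$, where $V_J:=\spn\{g_j:j\in J\}$ has dimension at most $m$. Applying the standard volumetric estimate to the unit ball of $(V_J,\|\cdot\|_X)$ gives an $\e'$-net of $V_J\cap R\,U(X)$ of cardinality at most $(3R/\e')^m$, and hence an $\e'$-net of $\Sigma_m(\D_N)\cap R\,U(X)$ of cardinality
\[
\binom{N}{m}\Bl(\frac{3R}{\e'}\Br)^m\le \Bl(\frac{eN}{m}\Br)^m\Bl(\frac{3R}{\e'}\Br)^m.
\]
Every $f\in\bF$ then lies within distance $m^{-r}+\e'$ of some net element.

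To conclude, I set $\e'=m^{-r}$ to balance the approximation and covering contributions; taking $\log_2$ of the above cardinality gives
\[
m\log_2(eN/m)+m\log_2(3Rm^r)\le C_1(r,R)\,m\log_2(2N/m).
\]
Choosing $m$ as the largest integer for which $C_1(r,R)\,m\log_2(2N/m)\le n$, one finds $m\asymp n/\log_2(2N/n)$ in the relevant regime, and therefore $\e_n(\bF,X)\le 2m^{-r}\lesssim(\log(2N/n)/n)^r$, which is the desired bound.

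The principal technical obstacle is the inversion $n\asymp m\log(2N/m)\leftrightarrow m\asymp n/\log(2N/n)$ performed cleanly enough to produce the logarithmic factor $\log(2N/n)$ (rather than $\log N$ or $\log(2N/m)$); one has to check that these two quantities agree up to an absolute factor at the chosen $m$. A secondary point is the boundary regime of small $n$: when $n\lesssim\log_2(2N)$ the stated bound is $\gtrsim 1$ and the estimate follows trivially from $\bF\subset R_0 U(X)$, so the argument above only needs to be carried out for $n$ large enough that the chosen $m$ is at least $1$ and at most $N$.
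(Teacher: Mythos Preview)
The paper does not prove this theorem; it is quoted as a known result (Theorem 7.4.3 in \cite{VTbookMA}), so there is no in-paper argument to compare against.

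Your single-scale volumetric covering is the natural starting point, but the displayed estimate
\[
m\log_2(eN/m)+m\log_2(3Rm^r)\le C_1(r,R)\,m\log_2(2N/m)
\]
fails in the regime you need. The left side contains the term $rm\log_2 m$; for $m$ close to $N$ this is of order $r\log_2 N$ while $\log_2(2N/m)$ is of order $1$, so no constant $C_1$ independent of $N$ can work. This is not the ``inversion'' issue you flag as the principal obstacle --- the trouble is one step earlier. Carried through honestly, your covering yields log-net-size $\asymp m\bigl(\log_2(3eR)+\log_2 N+(r-1)\log_2 m\bigr)$ and, after optimizing $m$, only
\[
\e_n(\bF,X)\le C(r,R)\Bigl(\frac{\log N}{n}\Bigr)^r,
\]
which is strictly weaker than the stated bound precisely when $n$ is comparable to $N$.

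The sharp factor $\log(2N/n)$ comes from a dyadic chaining. With $m_j=2^j$ and $g_{f,m_j}\in\Sigma_{m_j}(\D_N)$ best approximants, telescope $g_{f,m_J}=\sum_{j\le J}(g_{f,m_j}-g_{f,m_{j-1}})$. The $j$-th increment lies in $\Sigma_{3\cdot 2^{j-1}}(\D_N)$ and, by the hypothesis, has norm at most $2\cdot 2^{-(j-1)r}$; the point is that the radius entering the volumetric count at level $j$ is $\asymp 2^{-jr}$ rather than the global $R$. Summing the level-wise log-cardinalities now gives $\sum_{j\le J}2^j\bigl(\log_2(eN/2^j)+O(r)(J-j)\bigr)\asymp 2^J\log_2(2N/2^J)$ with no stray $\log_2 m$ term, and then your inversion and boundary-regime remarks go through. (Your observation that the constant must depend on $R$ is correct and unavoidable: for $\bF=\{cg_1:|c|\le R_0\}$ with $g_1\in\D_N$ the hypothesis holds trivially, yet $\e_1(\bF,X)\asymp R_0\|g_1\|_X$.)
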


\section{Recovery of periodic functions with mixed smoothness}
\label{C}

We begin with some notation.  Given $a>0$, we use  the notation $[a]$ to   denote the largest integer $\leq a$.
Let $\mathbf s=(s_1,\dots,s_d )$ be a  vector  whose  coordinates  are
nonnegative integers
$$
\rho(\mathbf s) := \bigl\{ \mathbf k\in\mathbb Z^d:[ 2^{s_j-1}] \le
|k_j| < 2^{s_j},\qquad j=1,\dots,d \bigr\},
$$
$$
Q_n :=   \cup_{\|\mathbf s\|_1\le n}
\rho(\mathbf s) \quad\text{--}\quad\text{a step hyperbolic cross},
$$
$$
\Gamma(N) := \bigl\{ \mathbf k\in\mathbb Z^d :\prod_{j=1}^d
\max\bigl( |k_j|,1\bigr) \le N\bigr\}\quad\text{--}\quad\text{a hyperbolic cross}.
$$
 For $f\in L_1 (\T^d)$
$$
\delta_{\mathbf s} (f,\mathbf x) :=\sum_{\mathbf k\in\rho(\mathbf s)}
\hat f(\mathbf k)e^{i(\mathbf k,\mathbf x)},\quad \hat f(\mathbf k) := (2\pi)^{-d}\int_{\T^d} f(\bx)e^{-i(\mathbf k,\mathbf x)}d\bx.
$$
Let $Q$ be a finite set of points in $\mathbb Z^d$, we denote
$$
\Tr(Q) :=\left\{ t : t(\mathbf x) =\sum_{\mathbf k\in Q}c_{\mathbf k}
e^{i(\mathbf k,\mathbf x)}\right\} .
$$

Along with the $L_p$ norm
$$
\|f\|_p := \left((2\pi)^{-d}\int_{\T^d} |f(\bx)|^pd\bx\right)^{1/p},\quad 1\le p<\infty,
$$
we consider the Wiener norm (the $A$-norm of $f$, which is the $\ell_1$-norm of the Fourier coefficients of $f$)
$$
\|f\|_A := \sum_{\mathbf k \in \Z^d}|\hat f({\mathbf k })|.
$$
 
In this section we study recovery problems for classes of functions with mixed smoothness. We define these classes momentarily.  
We begin with the case of univariate periodic functions. Let for $r>0$ 
\be\label{C1}
F_r(x):= 1+2\sum_{k=1}^\infty k^{-r}\cos (kx-r\pi/2) 
\ee
and
$$
W^r_p := \{f:f=\varphi \ast F_r,\quad \|\varphi\|_p \le 1\},
$$
where
$$
 \quad (\varphi \ast F_r)(x):= (2\pi)^{-1}\int_\T \ff(y)F_r(x-y)dy.  
$$
It is well known that for $r>1/p$ the class $W^r_p$ is embedded into the space of continuous functions $\cC(\T)$.  

In the multivariate case for $\bx=(x_1,\dots,x_d)$ denote
$$
F_r(\bx) := \prod_{j=1}^d F_r(x_j)
$$
and
$$
\bW^r_p := \{f:f=\varphi\ast F_r,\quad \|\varphi\|_p \le 1\},
$$
where
$$ (\varphi \ast F_r)(\bx):= (2\pi)^{-d}\int_{\T^d} \ff(\by)F_r(\bx-\by)d\by.
$$
 
The above defined classes $\bW^r_p$ are classical classes of functions with {\it dominated mixed derivative} (Sobolev-type classes of functions with mixed smoothness). The reader can find results on approximation properties of these classes in the books \cite{VTbookMA} and \cite{DTU}.

The following classes, which are convenient in studying sparse approximation with respect to the trigonometric system, 
where introduced and studied in \cite{VT150}. Define for $f\in L_1(\T^d)$
$$
f_j:=\sum_{\|\bs\|_1=j}\delta_\bs(f), \quad j\in \N_0,\quad \N_0:=\N\cup \{0\}.
$$
For parameters $ a\in \R_+$, $ b\in \R$ define the class
$$
\bW^{a,b}_A:=\{f: \|f_j\|_A \le 2^{-aj}(\bar j)^{(d-1)b},\quad \bar j:=\max(j,1), \quad j\in \N\}.
$$
We now explain how classes $\bW^r_p$ and $\bW^{a,b}_A$ are related. It is well known (see, for instance, \cite{Tmon}, Ch.2, Theorem 2.1) that for $f\in \bW^r_p$, $1<p<\infty$, one has  
\be\label{C2}
\|f_j\|_p \le C(d,p,r)2^{-jr},\quad j\in \N.
\ee
The known results (see \cite{Tmon}, Ch.1, Theorem 2.2) imply for $1<p\le 2$
\be\label{C3}
\|f_j\|_A \le C(d,p)2^{j/p} j^{(d-1)(1-1/p)} \|f_j\|_p  
\ee
\be\label{C3'}
\le C(d,p,r) 2^{-(r-1/p)j}j^{(d-1)(1-1/p)}    .
\ee
Therefore, class $\bW^r_p$ is embedded into the class $\bW^{a,b}_A$ with $a=r-1/p$ and $b = 1-1/p$. 

The following bound for sparse approximation was proved in \cite{VT150} (see Lemma 2.1 and its proof there). 

\begin{Lemma}\label{L2.1}   There exist two constants $c(a,d)$ and $C(a,b,d)$ such that for any $v\in\N$ there is a constructive method $A_v$ based on greedy algorithms, which provides a $v$-term approximant 
from $\Tr(\Gamma(M))$, $|\Gamma(M)|\le v^{c(a,d)}$, with 
the bound for $f\in \bW^{a,b}_A$
$$
\|f-A_v(f)\|_\infty \le C(a,b,d)  v^{-a-1/2} (\log v)^{(d-1)(a+b)+1/2}.      
$$

\end{Lemma}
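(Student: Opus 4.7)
\noindent\textit{Proof proposal.}
The strategy is a two-stage decomposition: first keep the low-frequency content of $f$ exactly by projection onto a step hyperbolic cross, then apply a greedy algorithm to the remaining high-frequency tail.

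I would first select two thresholds $j_0<j_1$ in terms of $v$. Let $j_0$ be the largest integer with $|Q_{j_0}|\le v/2$; since $|Q_n|\asymp 2^n n^{d-1}$, this forces $j_0=\log v-(d-1)\log\log v+O(1)$. Let $j_1:=\lceil(1+\tfrac{1}{2a})\log v\rceil$, so that $\sum_{j>j_1} 2^{-aj}j^{(d-1)b}\lesssim v^{-a-1/2}(\log v)^{(d-1)b}$. Decompose
\[
f=P_{Q_{j_0}}(f)+h+R,\qquad h:=\sum_{j_0<\|\bs\|_1\le j_1}\delta_{\bs}(f),\qquad R:=\sum_{\|\bs\|_1>j_1}\delta_{\bs}(f),
\]
and set $A_v(f):=P_{Q_{j_0}}(f)+G$, where $G$ is a weak greedy approximation to $h$ over the trigonometric dictionary, stopped after $v-|Q_{j_0}|\ge v/2$ steps. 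Then $A_v(f)\in\Tr(Q_{j_1})\subset\Tr(\Gamma(M))$ with $M\asymp 2^{j_1}$ and $|\Gamma(M)|\le v^{c(a,d)}$.

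Next I would bound the two nontrivial pieces. The tail obeys $\|R\|_\infty\le\|R\|_A\lesssim 2^{-aj_1}j_1^{(d-1)b}\lesssim v^{-a-1/2}(\log v)^{(d-1)b}$ by the choice of $j_1$. For the greedy part, $\|h\|_A\le\sum_{j>j_0}2^{-aj}j^{(d-1)b}\lesssim 2^{-aj_0}j_0^{(d-1)b}\asymp v^{-a}(\log v)^{(d-1)(a+b)}$ by the choice of $j_0$. The Maurey / Weak Chebyshev Greedy Algorithm bound in $L_p$ gives, for every $p\in[2,\infty)$,
\[
\|h-G\|_p\lesssim \sqrt{p}\,\|h\|_A\,v^{-1/2}.
\]
Combining with the Nikol'skii-type inequality $\|g\|_\infty\le C(d)|Q_{j_1}|^{1/p}\|g\|_p$ for $g\in\Tr(Q_{j_1})$, and optimizing $p=2\log|Q_{j_1}|\asymp\log v$ (which makes $|Q_{j_1}|^{1/p}$ bounded), yields
\[
\|h-G\|_\infty\lesssim \sqrt{\log v}\,\|h\|_A\,v^{-1/2}\lesssim v^{-a-1/2}(\log v)^{(d-1)(a+b)+1/2}.
\]
Adding the bound on $\|R\|_\infty$ completes the estimate.

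The main obstacle is coordinating $j_0$ and $j_1$ so that three constraints balance simultaneously: (i) the total number of active coefficients stays at most $v$, forcing $|Q_{j_0}|\le v/2$; (ii) both the tail error $\|R\|_\infty$ and the greedy error $\|h-G\|_\infty$ reach the target rate $v^{-a-1/2}(\log v)^{(d-1)(a+b)+1/2}$, which dictates $j_1\asymp(1+\tfrac{1}{2a})\log v$; and (iii) the approximant fits inside a hyperbolic cross of polynomial size $v^{c(a,d)}$. The conceptual point is that the improved exponent $a+\tfrac12$, better than the bare linear-projection rate $v^{-a}(\log v)^{(d-1)(a+b)}$, is the $v^{-1/2}\sqrt{\log v}$ gain of greedy approximation applied \emph{after} the hyperbolic-cross projection has already compressed $\|h\|_A$ down to $v^{-a}(\log v)^{(d-1)(a+b)}$.
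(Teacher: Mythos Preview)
Your proposal is correct and is precisely the argument the paper invokes: the lemma is not re-proved in the paper but cited from \cite{VT150}, where the proof proceeds exactly by the two-stage scheme you describe---exact projection onto $Q_{j_0}$ with $|Q_{j_0}|\le v/2$, greedy $L_p$-approximation of the middle block with the $\sqrt{p}\,v^{-1/2}$ rate, a Nikol'skii inequality plus the choice $p\asymp\log v$ to pass to $L_\infty$, and a trivial tail bound beyond $j_1\asymp(1+\tfrac{1}{2a})\log v$. Your threshold choices and the resulting size bound $|\Gamma(M)|\le v^{c(a,d)}$ are also the ones used there.
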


Lemma \ref{L2.1} and Theorem \ref{IT4} imply the following bound for the error of nonlinear $v$-sparse least squares recovery 
in $L_2$ in the classes $\bW^{a,b}_A$. Let $v\in\N$ be given and let $M$ be from Lemma \ref{L2.1}. Define the system $\Phi_N$ as  the orthonormal 
basis $\{e^{i(\bk,\bx)}\}_{\bk\in \Gamma(M)}$ of the $\Tr(\Gamma(M))$.

\begin{Theorem}\label{CT1}   There exist two constants $c'(a,d)$ and $C'(a,b,d)$ such that       for any $v\in\N$ we have the bound (with $\Phi_N$ defined above)
\begin{equation}\label{C4}
 \varrho_{m,v}^{ls}(\bW^{a,b}_A,\Phi_N,L_2(\T^d)) \le C'(a,b,d)  v^{-a-1/2} (\log v)^{(d-1)(a+b)+1/2}      
\end{equation}
for any $m$ satisfying 
$$
m\ge c'(a,d) v(\log(2v))^4.
$$
\end{Theorem}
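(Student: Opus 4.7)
The plan is to combine Lemma \ref{L2.1} with Theorem \ref{IT4} applied to the special orthonormal dictionary $\Phi_N = \{e^{i(\bk,\bx)}\}_{\bk \in \Gamma(M)}$. First I would verify that this $\Phi_N$ satisfies the hypotheses of Theorem \ref{IT4}. Condition \eqref{I9} is immediate since $|e^{i(\bk,\bx)}| = 1$ on $\T^d$. Condition \eqref{Riesz} holds with the Riesz constant $K = 1$ because $\Phi_N$ is orthonormal in $L_2(\T^d)$; indeed, $\|\sum a_{\bk} e^{i(\bk,\bx)}\|_2^2 = \sum |a_{\bk}|^2$.

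Next I would invoke Lemma \ref{L2.1}: for every $f \in \bW^{a,b}_A$, the algorithm $A_v$ produces a $v$-term linear combination of the exponentials $\{e^{i(\bk,\bx)}\}_{\bk \in \Gamma(M)}$ (hence an element of $\Sigma_v(\Phi_N)$) satisfying
\[
\|f - A_v(f)\|_\infty \le C(a,b,d)\, v^{-a-1/2} (\log v)^{(d-1)(a+b)+1/2}.
\]
Passing to the supremum over $f \in \bW^{a,b}_A$ and then to the infimum defining $\sigma_v$ yields
\[
\sigma_v(\bW^{a,b}_A, \Phi_N)_\infty \le C(a,b,d)\, v^{-a-1/2} (\log v)^{(d-1)(a+b)+1/2}.
\]

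With the dictionary verified and the sparse approximation rate in hand, I would apply the unconditional bound \eqref{I14} of Theorem \ref{IT4}, which gives
\[
\varrho_{m,v}^{ls}(\bW^{a,b}_A, \Phi_N, L_2(\T^d)) \le 5\, \sigma_v(\bW^{a,b}_A, \Phi_N)_\infty,
\]
provided $m \ge C_K\, v (\log N) \log^2(2v)(\log(2v) + \log\log N)$, where the implicit constant $C_K$ here is absolute since $K = 1$. Setting $C'(a,b,d) := 5\, C(a,b,d)$ produces the stated error estimate \eqref{C4}.

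The last step is to simplify the sampling budget. Here $N = |\Gamma(M)| \le v^{c(a,d)}$, so $\log N \le c(a,d) \log(2v)$ and $\log \log N \le \log(c(a,d) \log(2v)) \le C \log(2v)$. Substituting these estimates into the lower bound for $m$ coming from Theorem \ref{IT4} reduces it to $m \ge c'(a,d)\, v (\log(2v))^4$ for a suitable constant $c'(a,d)$, which is the condition asserted in the theorem. No step should present a real obstacle; the only point requiring mild attention is interpreting the output of $A_v$ in Lemma \ref{L2.1} as an element of $\Sigma_v(\Phi_N)$, which is immediate from the construction of $A_v$ via greedy selection among the exponentials indexed by $\Gamma(M)$.
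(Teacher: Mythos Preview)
Your proposal is correct and follows essentially the same approach as the paper: set $\Phi_N=\{e^{i(\bk,\bx)}\}_{\bk\in\Gamma(M)}$, note that $N=|\Gamma(M)|\le v^{c(a,d)}$, and combine the $\sigma_v(\cdot)_\infty$ bound from Lemma \ref{L2.1} with inequality \eqref{I14} of Theorem \ref{IT4}. Your write-up simply makes explicit the verification of \eqref{I9}, \eqref{Riesz}, and the reduction of the sampling condition to $m\ge c'(a,d)v(\log(2v))^4$, all of which the paper leaves implicit.
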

\begin{proof} Let $v\in\N$ be given and let $M$ be from Lemma \ref{L2.1}. Consider the orthonormal 
basis $\{e^{i(\bk,\bx)}\}_{\bk\in \Gamma(M)}$ of the $\Tr(\Gamma(M))$ to be the system $\Phi_N$.
Then $N=|\Gamma(M)|$ and by Lemma \ref{L2.1} we obtain $N\le v^{c(a,d)}$. We now apply Theorem \ref{IT4} with $\Omega =\T^d$, $\mu$ -- the normalized Lebesgue measure on $\T^d$, and $\bF =\bW^{a,b}_A$.  This completes the proof. 
\end{proof}
 
The following classes were introduced and studied in \cite{VT152} (see also \cite{VTbookMA}, p.364)
$$
\bW^{a,b}_p:=\{f: \|f_j\|_p \le 2^{-aj}(\bar j)^{(d-1)b},\quad \bar j:=\max(j,1),\quad j\in \N_0\}.
$$
It is well known that the class $\bW^r_p$ is embedded in the class $\bW^{r,0}_p$ for $1<p<\infty$ (see (\ref{C2}) above). 
Classes $\bW^{a,b}_p$ provide control of smoothness at two scales: $a$ controls the power type smoothness and $b$ controls the logarithmic scale smoothness. Similar classes with the power and logarithmic scales of smoothness are studied in the book of Triebel \cite{Tr}.

The inequality (\ref{C3}) implies that for $1< p \le 2$ the class $\bW^{a,b}_p$ is embedded into 
$\bW^{a',b'}_A$ with $a'= a-1/p$, $b'= b+1-1/p$. Therefore, Theorem \ref{CT1} implies the 
following bounds for the classes $\bW^{a,b}_p$. In further formulations we use the term 
{\it constant} for a positive number. 

\begin{Theorem}\label{CT2}  Let $1<p\le 2$.  There exist two constants $c=c(a,d,p)$ and $C=C(a,b,d,p)$ such that       for any $v\in\N$ we have the bound for $a>1/p$ (with $\Phi_N$ defined above)
 \begin{equation}\label{C5}
  \varrho_{m,v}^{ls}(\bW^{a,b}_p,\Phi_N,L_2(\T^d)) \le C   v^{-a+1/p-1/2} (\log v)^{(d-1)(a+b+1-2/p)+1/2}      
\end{equation}
for any $m$ satisfying 
$$
m\ge c  v(\log(2v))^4.
$$
\end{Theorem}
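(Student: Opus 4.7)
The plan is to deduce Theorem~\ref{CT2} directly from Theorem~\ref{CT1} via the embedding $\bW^{a,b}_p \hookrightarrow \bW^{a',b'}_A$ with $a' = a - 1/p$ and $b' = b + 1 - 1/p$, which the paper essentially identifies in the paragraph containing \eqref{C3}. The role of the hypothesis $a > 1/p$ is precisely to guarantee that $a' > 0$, so that Theorem~\ref{CT1} is applicable to $\bW^{a',b'}_A$.

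First I would make the embedding explicit. For $f \in \bW^{a,b}_p$ the definition gives $\|f_j\|_p \le 2^{-aj}(\bar j)^{(d-1)b}$. Combining this with \eqref{C3}, valid for $1 < p \le 2$, yields
$$
\|f_j\|_A \le C(d,p)\, 2^{j/p} j^{(d-1)(1-1/p)} \|f_j\|_p \le C(d,p)\, 2^{-a'j} (\bar j)^{(d-1)b'}.
$$
Thus $C(d,p)^{-1}\bW^{a,b}_p \subset \bW^{a',b'}_A$, i.e. every $f \in \bW^{a,b}_p$ can be written as $f = C(d,p) g$ for some $g \in \bW^{a',b'}_A$.

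Next I would invoke a scaling property of $\varrho_{m,v}^{ls}$. Since $LS(\xi, L)$ is linear in $f$, one has $LS(\xi, L)(\lambda g) = \lambda LS(\xi, L)(g)$, hence $\min_{L \in \cX_v(\D_N)}\|\lambda g - LS(\xi,L)(\lambda g)\|_2 = \lambda \min_L \|g - LS(\xi,L)(g)\|_2$. Taking the supremum over $f \in \bW^{a,b}_p$ and the infimum over $\xi$ gives
$$
\varrho_{m,v}^{ls}(\bW^{a,b}_p,\Phi_N,L_2(\T^d)) \le C(d,p)\, \varrho_{m,v}^{ls}(\bW^{a',b'}_A,\Phi_N,L_2(\T^d)),
$$
where $\Phi_N$ is the orthonormal trigonometric system over $\Gamma(M)$ used in Theorem~\ref{CT1}.

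Finally I would apply Theorem~\ref{CT1} to $\bW^{a',b'}_A$. Provided that $m \ge c'(a',d)\, v(\log(2v))^4$, which can be re-expressed as $m \ge c(a,d,p)\, v(\log(2v))^4$, Theorem~\ref{CT1} yields
$$
\varrho_{m,v}^{ls}(\bW^{a',b'}_A,\Phi_N,L_2(\T^d)) \le C'(a',b',d)\, v^{-a'-1/2}(\log v)^{(d-1)(a'+b')+1/2}.
$$
Substituting $-a' - 1/2 = -a + 1/p - 1/2$ and $(d-1)(a'+b') + 1/2 = (d-1)(a + b + 1 - 2/p) + 1/2$ produces exactly the exponents asserted in \eqref{C5}, while the extra factor $C(d,p)$ from the embedding is absorbed into the final constant $C = C(a,b,d,p)$. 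There is no genuine obstacle here: the entire argument is a routine embedding plus invocation of Theorem~\ref{CT1}, the only point requiring a moment's care being the homogeneity of $\varrho_{m,v}^{ls}$ under rescaling of the input class, which follows from the linearity of the least squares operator.
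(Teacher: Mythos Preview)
Your proposal is correct and mirrors the paper's own argument exactly: the paper states just before Theorem~\ref{CT2} that inequality \eqref{C3} gives the embedding $\bW^{a,b}_p \hookrightarrow \bW^{a',b'}_A$ with $a'=a-1/p$, $b'=b+1-1/p$, and then says ``Therefore, Theorem~\ref{CT1} implies the following bounds,'' which is precisely the reduction you carry out (with the homogeneity step made explicit).
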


In particular, we have the following result for the $\bW^{r}_p$ classes. 

\begin{Theorem}\label{CT3}  Let $1<p\le 2$.  There exist two constants $c=c(r,d,p)$ and $C=C(r,d,p)$ such that       for any $v\in\N$ we have the bound for $r>1/p$ (with $\Phi_N$ defined above)
\begin{equation}\label{C6}
 \varrho_{m,v}^{ls}(\bW^{r}_p,\Phi_N,L_2(\T^d)) \le C   v^{-r+1/p-1/2} (\log v)^{(d-1)(r+1-2/p)+1/2}      
\end{equation}
for any $m$ satisfying 
$$
m\ge c  v(\log(2v))^4.
$$
\end{Theorem}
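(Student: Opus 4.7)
The plan is to deduce Theorem \ref{CT3} directly from Theorem \ref{CT2} by exploiting the embedding of the classical mixed-smoothness Sobolev class $\bW^r_p$ into the two-parameter scale $\bW^{a,b}_p$ introduced earlier in the section. Specifically, the embedding
\[
\bW^r_p \hookrightarrow C(d,p,r)\,\bW^{r,0}_p, \qquad 1<p<\infty,
\]
is an immediate consequence of the inequality (\ref{C2}), which gives $\|f_j\|_p \le C(d,p,r) 2^{-jr}$ for every $f\in \bW^r_p$. This means that for an appropriate constant $C_0=C_0(d,p,r)$ we have $\bW^r_p \subset C_0 \bW^{r,0}_p$ in the sense of set inclusion.

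Next, I observe that the characteristic $\varrho^{ls}_{m,v}(\bF,\Phi_N,L_2)$ is positively homogeneous of degree one in $\bF$: if $\bF\subset C_0\bG$, then
\[
\varrho^{ls}_{m,v}(\bF,\Phi_N,L_2) \le C_0\,\varrho^{ls}_{m,v}(\bG,\Phi_N,L_2).
\]
This is clear from the definition, since $\min_{L\in\cX_v(\D_N)}\|f-LS(\xi,L)(f)\|_2$ is homogeneous of degree one in $f$ (the least squares operator $LS(\xi,L)$ being linear in the input function). Combining this monotonicity with the embedding above reduces Theorem \ref{CT3} to the bound for $\bW^{r,0}_p$ provided by Theorem \ref{CT2} with the choices $a=r$ and $b=0$, which requires the hypothesis $a > 1/p$ and is satisfied here because $r>1/p$.

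Finally, specializing Theorem \ref{CT2} with $a=r$, $b=0$, I obtain for any $v\in\N$ and any $m\ge c(r,d,p) v(\log(2v))^4$ that
\[
\varrho^{ls}_{m,v}(\bW^{r,0}_p,\Phi_N,L_2(\T^d)) \le C(r,d,p)\, v^{-r+1/p-1/2}(\log v)^{(d-1)(r+1-2/p)+1/2}.
\]
The exponent of $(\log v)$ coming from $(d-1)(a+b+1-2/p)+1/2$ with $(a,b)=(r,0)$ is exactly $(d-1)(r+1-2/p)+1/2$, which matches the statement of Theorem \ref{CT3}. Multiplying through by the embedding constant $C_0$ and absorbing it into the final constant $C=C(r,d,p)$ completes the argument.

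There is no real obstacle here: every ingredient has already been assembled in the excerpt. The only minor point worth emphasizing is that the assumption $r>1/p$ is needed precisely to ensure that $\bW^r_p$ embeds into $\cC(\T^d)$ so that the recovery characteristic $\varrho^{ls}_{m,v}$, which uses point evaluations of $f$, is meaningful, and also so that the parameter $a=r$ in Theorem \ref{CT2} satisfies the hypothesis $a>1/p$ of that theorem.
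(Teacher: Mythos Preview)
Your proof is correct and follows exactly the route the paper takes: the paper presents Theorem \ref{CT3} as the specialization of Theorem \ref{CT2} with $a=r$, $b=0$, invoking the embedding $\bW^r_p\hookrightarrow \bW^{r,0}_p$ coming from (\ref{C2}), which is precisely what you do. Your additional remarks on homogeneity of $\varrho^{ls}_{m,v}$ and on the role of the hypothesis $r>1/p$ simply make explicit what the paper leaves implicit.
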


We now discuss a very interesting effect, which was noticed in \cite{JUV}. We compare 
two recovery characteristics -- the linear one $\varrho_m(\bW^{a,b}_p,L_2(\T^d))$ and the nonlinear one $\varrho_{m,v}^{ls}(\bW^{a,b}_p,\Phi_N,L_2(\T^d))$. On the one hand, by (\ref{I2}) we have
\be\label{C7}
\varrho_m(\bW^{a,b}_p,L_2(\T^d)) \ge d_m(\bW^{a,b}_p,L_2(\T^d)).
\ee
For brevity, in our further discussion we use signs $\ll$ and $\asymp$. Relation $a_m\ll b_m$ means 
$a_m\le Cb_m$ with $C$ independent of $m$. Relation $a_m\asymp b_m$ means $a_m\ll b_m$ and $b_m\ll a_m$.
It is well known (see, for instance, \cite{VTbookMA}, p. 216) that for $1<p\le 2$
$$
d_m(\bW^{r}_p,L_2(\T^d)) \asymp \left((\log m)^{d-1}/m\right)^{r-\eta},\quad \eta:= 1/p-1/2, \quad r>\eta.
$$
In the same way one can obtain the following relation for $1<p\le 2$
\be\label{C8}
d_m(\bW^{a,b}_p,L_2(\T^d)) \asymp \left((\log m)^{d-1}/m\right)^{a-\eta}(\log m)^{(d-1)b}, \quad a>\eta.
\ee
On the other hand, Theorem \ref{CT2} implies that for $1<p\le 2$
\be\label{C9} 
\varrho_{m,v}^{ls}(\bW^{a,b}_p,\Phi_N,L_2(\T^d)) \ll \left((\log m)^4/m\right)^{a-\eta} (\log m)^{(d-1)(a+b-2\eta)+1/2}  .
\ee
It is clear that in the case $1<p<2$ for large enough $d$ the right hand side of (\ref{C9}) goes to zero faster than 
the right hand side of (\ref{C8}). This follows from the fact that the exponents for $m$ in both (\ref{C8}) and (\ref{C9}) coincide and the exponent of $\log m$ in (\ref{C9}) ($(d-1)(a+b-2\eta)+1/2+4(a-\eta)$) is smaller than the exponent of $\log m$ in (\ref{C8})) ($(d-1)(a+b-\eta)$) for large $d$. Therefore, for such $d$ we have for $1<p<2$
\be\label{C10}
\varrho_{m,v}^{ls}(\bW^{a,b}_p,\Phi_N,L_2(\T^d)) = o\left(\varrho_m(\bW^{a,b}_p,L_2(\T^d))\right),
\ee
which demonstrates advantages of nonlinear methods over linear ones. 

{\bf Comment \ref{C}.1.} It is clear from the above argument that the relation (\ref{C10}) is a corollary 
of the corresponding relation between the Kolmogorov $n$-width and the best  $n$-term approximation with respect to the trigonometric system. This phenomenon is known. Actually, the following relation holds for $1<q<p\le 2$, $r>2(1/q-1/p)$ (see, for instance, \cite{Tmon}, Chapter IV, Section 2)
$$
 \sigma_n(\bW^r_q,\Tr^d)_p \asymp d_n(\bW^r_q,L_p) (\log n)^{(d-1)(1/p-1/q)},
 $$
 where $\Tr^d := \{e^{i(\bk,\bx)}\}_{\bk\in \Z^d}$ is the trigonometric system defined on $\T^d$. 

{\bf Comment \ref{C}.2.} In this section we demonstrated applications of Theorem \ref{IT4} 
with the system $\Phi_N$ being a subsystem of the trigonometric system. Namely, we used 
inequality (\ref{I14}) and known results on best $v$-term approximation in the uniform norm.  
Remark \ref{BR1} shows that one can obtain better results on best $v$-term 
approximation in the norm $\|\cdot\|_{L_2(\T^d,\mu_\xi)}$, $\xi \in (\T^d)^m$, than in the uniform norm. Inspecting the proof of Lemma \ref{L2.1} in \cite{VT150}, we 
find that it gives the following version of it in the case of norm $\|\cdot\|_{L_2(\T^d,\mu_\xi)}$.

\begin{Lemma}\label{L2.1a}   There exist two constants $c(a,d)$ and $C(a,b,d)$ such that for any $\xi\in (\T^d)^m$ and $v\in\N$ there is a constructive method $A_{v,\xi}$ based on greedy algorithms, which provides a $v$-term approximant 
from $\Tr(\Gamma(M))$, $|\Gamma(M)|\le v^{c(a,d)}$, with 
the bound for $f\in \bW^{a,b}_A$
$$
\|f-A_{v,\xi}(f)\|_{L_2(\T^d,\mu_\xi)} \le C(a,b,d)  v^{-a-1/2} (\log v)^{(d-1)(a+b)}.      
$$

\end{Lemma}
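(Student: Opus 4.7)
The plan is to mimic the proof of Lemma~\ref{L2.1} from \cite{VT150}, replacing the uniform norm used in the greedy-approximation step by the weighted Hilbert norm $\|\cdot\|_{L_2(\T^d,\mu_\xi)}$. The $(\log v)^{1/2}$ factor present in the $L_\infty$ version is precisely what is saved by this substitution, and it accounts for the improved logarithmic exponent $(d-1)(a+b)$ in place of $(d-1)(a+b)+1/2$.

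Concretely, write $R_j:=\bigcup_{\|\bs\|_1=j}\rho(\bs)$, so that $f_j\in\Tr(R_j)$ with $|R_j|$ of order $2^j j^{d-1}$, and decompose $f=\sum_{j\ge 0} f_j$. Choose $J_1$ of order $\log v$ so that the total number of Fourier coefficients in the blocks up to $J_1$ is at most $v/2$, i.e.\ $2^{J_1}$ is of order $v/(\log v)^{d-1}$; keep each such $f_j$ in its entirety. For $J_1<j\le J_2$, where $J_2$ is a multiple of $\log v$ chosen so that the Fourier support $\Gamma(M):=\bigcup_{j\le J_2} R_j$ of the final approximant satisfies $|\Gamma(M)|\le v^{c(a,d)}$, apply the weak orthogonal greedy algorithm in the Hilbert space $L_2(\T^d,\mu_\xi)$ with respect to the dictionary $\{e^{i(\bk,\cdot)}:\bk\in R_j\}$. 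This dictionary is uniformly bounded by $1$ in $L_\infty(\T^d)$, and hence also in $L_2(\T^d,\mu_\xi)$ regardless of $\xi$, so Theorem~2.19 of \cite{VTbook} (the same tool invoked in the proof of Proposition~\ref{BP1}) produces a $v_j$-term approximant $g_j\in\Tr(R_j)$ with
\begin{equation*}
\|f_j-g_j\|_{L_2(\T^d,\mu_\xi)}\le \|f_j\|_A v_j^{-1/2}\le 2^{-aj}\bar j^{(d-1)b} v_j^{-1/2}.
\end{equation*}
For $j>J_2$ discard the block; its contribution is controlled by $\|f_j\|_{L_2(\T^d,\mu_\xi)}\le \|f_j\|_A$, which again follows from the uniform bound on the basis.

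Combining these pieces by the triangle inequality in $L_2(\T^d,\mu_\xi)$ gives a total error bound of the form $\sum_{J_1<j\le J_2}\|f_j\|_A v_j^{-1/2}+\sum_{j>J_2}\|f_j\|_A$. Subject to $\sum_{J_1<j\le J_2} v_j\le v/2$, the Lagrangian-optimal allocation $v_j\propto \|f_j\|_A^{2/3}$ yields a middle-range contribution of order $v^{-1/2}\cdot 2^{-aJ_1}J_1^{(d-1)b}$, since the sum $\sum_{j>J_1}(2^{-aj}\bar j^{(d-1)b})^{2/3}$ is dominated by its first term via the geometric decay in $j$. Substituting $2^{J_1}$ of order $v/(\log v)^{d-1}$ gives $2^{-aJ_1}J_1^{(d-1)b}$ of order $v^{-a}(\log v)^{(d-1)(a+b)}$, and the resulting bound $v^{-a-1/2}(\log v)^{(d-1)(a+b)}$ is exactly the one claimed; the tail $\sum_{j>J_2}\|f_j\|_A$ is exponentially small in $J_2$ and, for $c(a,d)$ sufficiently large, is absorbed into the main term.

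The main technical point is the verification that Theorem~2.19 of \cite{VTbook} applies verbatim in the $\xi$-dependent Hilbert space $L_2(\T^d,\mu_\xi)$; this is automatic because that theorem requires only a Hilbert-space ambient and a uniform bound on the dictionary elements, both of which hold here. A secondary bookkeeping issue is that the orthogonality of distinct dyadic blocks available in the standard Lebesgue $L_2$ norm is lost for $\mu_\xi$, so block errors must be aggregated via the triangle inequality rather than by summing squares; this is harmless because for geometrically decaying block sizes both aggregation rules yield the same order of magnitude up to a multiplicative constant depending only on $a$. Apart from this substitution the entire computation is identical to the one carried out in \cite{VT150}.
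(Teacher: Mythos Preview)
Your proposal is correct and follows exactly the approach the paper intends: the paper does not spell out a proof but simply says that inspecting the proof of Lemma~\ref{L2.1} in \cite{VT150} and invoking Remark~\ref{BR1} (i.e., Theorem~2.19 of \cite{VTbook} in the Hilbert space $L_2(\T^d,\mu_\xi)$ in place of the $L_\infty$ greedy bound) yields the stated improvement, and this is precisely the substitution you carry out. Your additional remarks about using the triangle inequality across blocks (since orthogonality is lost under $\mu_\xi$) and about the Lagrangian allocation $v_j\propto\|f_j\|_A^{2/3}$ are accurate reconstructions of the details implicit in \cite{VT150}.
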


Lemma \ref{L2.1a} and Theorem \ref{IT4} imply the following analog of Theorem \ref{CT1}.   Let $v\in\N$ be given and let $M$ be from Lemma \ref{L2.1a}. Define the system $\Phi_N$ as  the orthonormal 
basis $\{e^{i(\bk,\bx)}\}_{\bk\in \Gamma(M)}$ of the $\Tr(\Gamma(M))$.

\begin{Theorem}\label{CT1a}   There exist two constants $c'(a,d)$ and $C'(a,b,d)$ such that       for any $v\in\N$ we have the bound (with $\Phi_N$ defined above)
\begin{equation}\label{C4}
 \varrho_{m,v}^{ls}(\bW^{a,b}_A,\Phi_N,L_2(\T^d)) \le C'(a,b,d)  v^{-a-1/2} (\log v)^{(d-1)(a+b)}      
\end{equation}
for any $m$ satisfying 
$$
m\ge c'(a,d) v(\log(2v))^4.
$$
\end{Theorem}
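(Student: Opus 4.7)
The plan is to mimic the proof of Theorem \ref{CT1} line by line, substituting the sharper approximation bound in the $L_2(\T^d,\mu_\xi)$ norm provided by Lemma \ref{L2.1a} for the uniform-norm bound from Lemma \ref{L2.1}. This will allow us to use the first, stronger conclusion \eqref{I13} of Theorem \ref{IT4} (via the $(2,m)$-type quantity $\sigma_v(\bF,\Phi_N)_{(2,m)}$) rather than the coarser uniform-norm bound \eqref{I14}. The net effect is the removal of the extra $(\log v)^{1/2}$ factor in \eqref{C4}.

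First, given $v\in\N$, let $M$ be the integer produced by Lemma \ref{L2.1a} and set $\Phi_N:=\{e^{i(\bk,\bx)}\}_{\bk\in\Gamma(M)}$, with $N=|\Gamma(M)|\le v^{c(a,d)}$. This system is an orthonormal family in $L_2(\T^d)$ with the normalized Lebesgue measure, and $|e^{i(\bk,\bx)}|\equiv 1$, so conditions \eqref{I9} and \eqref{Riesz} hold with $K=1$. Moreover, $\log N \le c(a,d)\log v$, so the hypothesis $m\ge C_K v(\log N)\log^2(2v)(\log(2v)+\log\log N)$ of Theorem \ref{IT4} is implied by the simpler condition $m \ge c'(a,d) v(\log(2v))^4$ after absorbing constants into $c'$.

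Next, apply Theorem \ref{IT4} with $\Omega=\T^d$, $\mu$ the normalized Lebesgue measure, and $\bF=\bW^{a,b}_A$. This yields
\[
\varrho_{m,v}^{ls}(\bW^{a,b}_A,\Phi_N,L_2(\T^d)) \le 5\sqrt{2}\,\sigma_v(\bW^{a,b}_A,\Phi_N)_{(2,m)}.
\]
To bound the right-hand side, fix an arbitrary $\xi\in(\T^d)^m$ and apply Lemma \ref{L2.1a} with this $\xi$: for every $f\in \bW^{a,b}_A$ the greedy approximant $A_{v,\xi}(f)$ lies in $\Sigma_v(\Phi_N)$ (because its frequencies come from $\Gamma(M)$) and satisfies
\[
\|f-A_{v,\xi}(f)\|_{L_2(\T^d,\mu_\xi)} \le C(a,b,d)\, v^{-a-1/2}(\log v)^{(d-1)(a+b)}.
\]
Taking the supremum over $f\in\bW^{a,b}_A$ and then over $\xi$ gives
\[
\sigma_v(\bW^{a,b}_A,\Phi_N)_{(2,m)} \le C(a,b,d)\, v^{-a-1/2}(\log v)^{(d-1)(a+b)},
\]
and combining with the previous display produces \eqref{C4} with a new constant $C'(a,b,d)=5\sqrt{2}\,C(a,b,d)$.

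There is no real obstacle here: the conceptual work has already been done in Lemma \ref{L2.1a} (the $\mu_\xi$-version of the greedy approximation bound) and in Theorem \ref{IT4}. The only points requiring mild care are (i) confirming that the frequencies of $A_{v,\xi}(f)$ are contained in $\Gamma(M)$, so that the approximant is indeed a $v$-term approximant with respect to the chosen $\Phi_N$, and (ii) verifying that $\log N\ll \log v$ so that the universal-discretization threshold becomes the advertised $v(\log(2v))^4$. Both are immediate from Lemma \ref{L2.1a}'s construction and the polynomial-in-$v$ bound on $|\Gamma(M)|$.
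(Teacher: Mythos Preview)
Your proof is correct and follows exactly the approach the paper indicates: the paper's own ``proof'' is simply the one-line remark that Lemma~\ref{L2.1a} and Theorem~\ref{IT4} (via inequality~\eqref{I13}) together give the result, and you have accurately supplied the routine details (setting $\Phi_N=\{e^{i(\bk,\bx)}\}_{\bk\in\Gamma(M)}$, checking \eqref{I9}--\eqref{Riesz}, absorbing $\log N\le c(a,d)\log v$ into the sample-size condition, and passing from the $\xi$-uniform bound in Lemma~\ref{L2.1a} to a bound on $\sigma_v(\bW^{a,b}_A,\Phi_N)_{(2,m)}$).
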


Theorem \ref{CT1a} improves Theorem \ref{CT1} because it has the smaller exponent $a+b$ of the 
$\log v$ factor   instead of $a+b+1/2$. 
In the same way we can improve Theorems \ref{CT2} -- \ref{CT3}, which are corollaries of Theorem \ref{CT1}. Namely, we delete $1/2$ from the exponent of $\log v$ in the bounds.

	\section{Sampling recovery by  Gegenbauer   polynomials}
	\label{G}

	In  this section, we  investigate sampling recovery in the univariate nonperiodic case, aiming  to derive several new results on the best $v$-term approximation with respect to  the system of Gegenbauer polynomials,  which  extend and complement the corresponding results 
	from \cite{JUV}.

We start with some necessary notations. 
For $\al>-\f12$, denote by  $\mu_\al$  the probability measure on $[-1,1]$ given by
$d\mu_\alpha(x)= v_\al(x) d x$, where  $v_\alpha(x):=c_\alpha\left(1-x^2\right)^\alpha$ and  $c_\alpha=\left(\int_{-1}^1\left(1-x^2\right)^\alpha d x\right)^{-1}$.
Let  $\mathcal{B}_\alpha=\{L_n^\al\}_{n=0}^\infty$ denote the  system of  orthonormal  polynomials with respect to the inner product $\langle\cdot,\cdot\rangle_{L^2\left(\mu_\alpha\right)}$   of $L_2(\mu_\al)$
 given by
$$
\langle f, g\rangle_{L^2\left(\mu_\alpha\right)}=\int_{-1}^1 f(x) \overline{g(x)} d \mu_\alpha(x),
$$
and
 $L_n^\al$ is
the orthonormalized   Gegenbauer  polynomial of degree $n$   with respect to the weight $v_\al$ on  $[-1,1]$ . Here and  throughout this section,   we use the notation $L_p(\mu_\al)$ to denote the Lebesgue space 
$L_p ([-1,1], \mu_\al)$  for the sake of convenience.
In the case of $\al=0$,   $\CB_0:=\{L_n^0\}_{n=0}^\infty$ is the system of orthonormalized Legendre polynomials on $[-1,1]$. 	
   Let $\mathcal{P}^n$ denote the space of all real algebraic polynomials in one variable of degree at most $n$.  Define   $\CB_\al^N:=\CB_\al\cap\mathcal{P}^N.$  Then $\CB_\al^N$ is an orthonormal basis of the space $\mathcal{P}^N$ equipped with the norm of $L_2([-1,1],\mu_\al)$.

One of the main difficulties in the study of  sparse approximation by  Gegenbauer   polynomials comes from the fact that the system $\CB_\al$ is not uniformly bounded on $[-1,1]$  for any $\al>-\f12$, which is different from the usual trigonometric  system. Indeed,
according to (\cite[ p. 81, (4.7.1); p.82, (4.7.15); p.169, (7.32.5)]{Sz}, we have  \begin{equation}\label{1-1-eq-ch2} |L_n^\al (x)|\leq C(\al)
\min\Bl\{ (n+1)^{\al+\f12}, (1-x^2)^{-\f\al2-\f14}\Br\},\end{equation}
and
\begin{equation}
\max_{x\in [-1,1]} |L_n^\al(x)|=L_n^\al(1) \sim (n+1)^{\al+\f12}.\end{equation}
Here and throughout this section, we use the notation $a_n\sim b_n$ for $n=1,2,\ldots$ to mean that there exist two positive constants $C_1, C_2$ independent of $n$ such that $C_1 a_n\leq b_n\leq C_2 a_n$ for all $n=1,2,\ldots$.

On the other hand,  setting $\vi_n^\al(x) =C(\al)^{-1} L_n^\al(x) (1-x^2)^{\f \al2+\f14}$ for  $n\in\NN_0$,    and using \eqref{1-1-eq-ch2}, we know  that $\{\vi_n^\al\}_{n=0}^\infty$ is a uniformly bounded  orthogonal system with respect to the probability measure $\f1\pi (1-x^2)^{-\f12}\, dx$ on $[-1,1]$.
Thus, Theorem \ref{BT1},   our result on universal discretization, is applicable   to the system $\{\vi_n^\al\}_{n=0}^\infty$. This immediately leads to the following result:

%

\begin{Theorem} \label{thm-6-1}
 For   $\al>-\f12$, and any integer $1\leq v\leq N$, there exist points  $\xi_1,\ldots, \xi_m\in [-1,1]$ with
	$$
	m \leq   C(\al)  v \log N\cdot \log^2(2v) \cdot (\log (2v)+\log\log N),
	$$
	such that
	\begin{equation*}
	\f12\|f\|_{L_2(\mu_\al)}^2 \le \f 1m \sum_{j=1}^m w(\xi_j)
	|f(\xi_j)|^2 \le \f32\|f\|_{L_2(\mu_\al)}^2,\   \   \ \forall f\in  \Sigma_v(\CB_\al^N),
	\end{equation*}
	where $w(x)=\pi^{-1}\sqrt{1-x^2}$ for $x\in [-1,1]$.
\end{Theorem}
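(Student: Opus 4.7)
The plan is to invoke Theorem \ref{BT1} on the auxiliary dictionary $\Phi_N := \{\varphi_n^\alpha\}_{n=0}^{N}$ introduced just before the theorem, and then transport the resulting Marcinkiewicz-type inequality back to the Gegenbauer system $\mathcal{B}_\alpha^N$ by a pointwise weighted change of variables. The key structural remark, already highlighted in the paragraph preceding the statement, is that although the Gegenbauer system itself is not uniformly bounded for $\alpha\neq-1/2$, the rescaled system $\varphi_n^\alpha(x) = C(\alpha)^{-1}L_n^\alpha(x)(1-x^2)^{\alpha/2+1/4}$ is uniformly bounded by $1$ on $[-1,1]$ (by the estimate \eqref{1-1-eq-ch2}) and is orthogonal with respect to the Chebyshev probability measure $d\nu(x) = \pi^{-1}(1-x^2)^{-1/2}\,dx$. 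Hence Theorem \ref{BT1} applies directly on the probability space $([-1,1],\nu)$.

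First I would verify the hypotheses \eqref{I9} and \eqref{Riesz} of Theorem \ref{BT1} with constants depending only on $\alpha$. The bound \eqref{I9} is exactly the uniform estimate $\|\varphi_n^\alpha\|_\infty \le 1$. The lower Riesz bound \eqref{Riesz} is immediate from orthogonality: since the $\varphi_n^\alpha$ share a common squared $L_2(\nu)$-norm $A = A(\alpha)$, one has $\|\sum_j a_j\varphi_j^\alpha\|_{L_2(\nu)}^2 = A\sum_j|a_j|^2$, so \eqref{Riesz} holds with $K = A^{-1}$. Applying Theorem \ref{BT1} with $p = 2$, $\Omega=[-1,1]$, and probability measure $\nu$ then yields points $\xi_1,\ldots,\xi_m \in [-1,1]$ with cardinality bounded as stated, such that
\[
\frac{1}{2}\|h\|_{L_2(\nu)}^2 \le \frac{1}{m}\sum_{j=1}^m|h(\xi_j)|^2 \le \frac{3}{2}\|h\|_{L_2(\nu)}^2, \qquad \forall\, h\in\Sigma_v(\Phi_N).
\]

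Finally, I would transfer this back to the Gegenbauer system. For any $f = \sum_{j\in J}a_j L_j^\alpha \in \Sigma_v(\mathcal{B}_\alpha^N)$, set $h(x) := C(\alpha)^{-1}(1-x^2)^{\alpha/2+1/4}f(x) = \sum_{j\in J}a_j\varphi_j^\alpha(x)$, which has the same coefficient vector as $f$ and therefore lies in $\Sigma_v(\Phi_N)$. A direct computation using the identity $d\mu_\alpha/d\nu = \pi c_\alpha(1-x^2)^{\alpha+1/2}$ gives $\|h\|_{L_2(\nu)}^2 = A\|f\|_{L_2(\mu_\alpha)}^2$, while pointwise $|h(\xi_j)|^2 = C(\alpha)^{-2}(1-\xi_j^2)^{\alpha+1/2}|f(\xi_j)|^2$. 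Substituting these two identities into the $h$-discretization and dividing through by $A$ produces the claimed two-sided bound for $f$, with the weight $w$ coming out proportional to $(1-\xi_j^2)^{\alpha+1/2}$, which matches the stated $w(x)$ after the normalization constants $C(\alpha)$, $c_\alpha$, $A$ are absorbed. The only mildly delicate step is bookkeeping these constants so that the factors $\frac12$ and $\frac32$ survive unchanged on both sides; there is no substantial conceptual obstacle, as Theorem \ref{BT1} supplies the essential universal-discretization input.
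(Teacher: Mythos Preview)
Your approach is exactly the one the paper intends: the paragraph preceding the theorem explicitly says that Theorem~\ref{BT1} is applicable to the system $\{\varphi_n^\alpha\}$ and that ``this immediately leads to'' the result, and you have correctly filled in that immediate argument (verifying \eqref{I9} and \eqref{Riesz} for the rescaled system with respect to the Chebyshev measure $\nu$, invoking Theorem~\ref{BT1} with $p=2$, and pulling the discretization back via $h(x)=C(\alpha)^{-1}(1-x^2)^{\alpha/2+1/4}f(x)$).

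There is, however, a genuine slip in your last paragraph. You correctly compute that the weight emerging from the substitution is proportional, as a function of $x$, to $(1-x^2)^{\alpha+1/2}$; in fact dividing through by $A=C(\alpha)^{-2}\pi^{-1}c_\alpha^{-1}$ gives exactly
\[
\frac12\|f\|_{L_2(\mu_\alpha)}^2 \le \frac1m\sum_{j=1}^m \pi c_\alpha\,(1-\xi_j^2)^{\alpha+1/2}\,|f(\xi_j)|^2 \le \frac32\|f\|_{L_2(\mu_\alpha)}^2.
\]
But you then assert that this ``matches the stated $w(x)$ after the normalization constants $C(\alpha),c_\alpha,A$ are absorbed.'' That is false: the stated weight is $w(x)=\pi^{-1}(1-x^2)^{1/2}$, and the quotient $(1-x^2)^{\alpha+1/2}/(1-x^2)^{1/2}=(1-x^2)^\alpha$ is an $x$-dependent function, not a scalar. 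No absorption of multiplicative constants can reconcile an $\alpha$-dependent exponent with a fixed one. Your argument actually proves the theorem with the $\alpha$-dependent weight $w_\alpha(x)=\pi c_\alpha(1-x^2)^{\alpha+1/2}$; the printed formula $w(x)=\pi^{-1}\sqrt{1-x^2}$ appears to be a misprint (it has the right functional form only when $\alpha=0$, and even then the constant is off). You should flag this discrepancy rather than wave it away as bookkeeping.
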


Given ${\xi}=(\xi_1,\ldots,\xi_m)\in [-1,1]^m$, we  define
\[ \bw_m({\xi}) =\f 1m (w(\xi_1), w(\xi_2), \ldots, w(\xi_m)),\]
 where  $w(x)=\pi^{-1}\sqrt{1-x^2}$ for  $x\in [-1,1]$.

For  an $N$-dimensional space $X_N\subset \cC\big([-1,1]\big)$ and $\xi=(\xi_1,\ldots,\xi_m)\in [-1,1]^m$,
we recall that the weighted least squares recovery operator is defined as in the introduction  by 
$$
\ell 2\bw_m(\xi,X_N)(f):=\text{arg}\min_{u\in X_N} \|S(f-u,\xi)\|_{2,\bw_m(\xi)},
$$
where
$$
\|S(g,\xi)\|_{2,\bw_m(\xi)}:= \left(\f 1m \sum_{\nu=1}^m w(\xi_j)  |g(\xi_j)|^2\right)^{1/2} .
$$

Given an integer $1\leq v\leq N$,
we consider the following sparse recovery characteristic
$$
\varrho^{wls}_{m,v}(\mathbf{F},\CB_\al^N,L_2) := \inf_{\xi =\{\xi_1,\dots,\xi_m\}} \sup_{f\in \mathbf{F}} \min_{L\in \cX_v(\CB_\al^N)} \|f-	\ell 2\bw_m(\xi,L)(f)\|_{L_2(\mu_\al)}
$$
where we recall that $\cX_v(\CB_\al^N)$ denotes the collection of all  linear spaces spanned by $ L_j^\al:\  \ j\in J$
with $J\subset [1,N]\cap \mathbb{N}$ and $|J|=v$.

\begin{Theorem}\label{thm-8-2} For  $\al>-\f12$, any integer $1\leq v\leq N$,  and 	any compact subset $\mathbf{F}$ of $\cC[-1,1]$, we have
	\begin{equation}
	\varrho_{m,v}^{wls}(\mathbf{F},\CB_\al^N,L_2(\mu_\al) ) \le 5 \sigma_v(\mathbf{F},\CB_\al^N)_\infty,
	\end{equation}
	provided
		$$
	m \ge   C(\al)  v \log N\cdot \log^2(2v) \cdot (\log (2v)+\log\log N).
	$$

\end{Theorem}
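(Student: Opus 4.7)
\textbf{Proof proposal for Theorem \ref{thm-8-2}.}

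The plan is to mirror the derivation of Theorem \ref{BT2} from Theorems \ref{IT2} and \ref{BT1}, but using the weighted version of the general conditional Theorem \ref{AT1} (together with Remark \ref{AR1}) and invoking Theorem \ref{thm-6-1} in place of Theorem \ref{BT1}. The non-uniform boundedness of the Gegenbauer system $\CB_\al$ is fully absorbed into Theorem \ref{thm-6-1}, via the auxiliary uniformly bounded system $\{\vi_n^\al\}_{n\ge 0}$, so no further work is needed on that front.

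First I would fix $1\le v\le N$ and a number $m$ satisfying the stated lower bound. By Theorem \ref{thm-6-1}, one can then choose a single configuration $\xi=(\xi_1,\ldots,\xi_m)\in [-1,1]^m$ for which
\begin{equation*}
\tfrac12\|f\|_{L_2(\mu_\al)}^2 \le \tfrac1m\sum_{j=1}^m w(\xi_j)|f(\xi_j)|^2 \le \tfrac32\|f\|_{L_2(\mu_\al)}^2
\end{equation*}
holds \emph{uniformly} for every $f\in\Sigma_v(\CB_\al^N)=\bigcup_{L\in\cX_v(\CB_\al^N)} L$. In particular, for each fixed $L\in\cX_v(\CB_\al^N)$, condition \textbf{A1} of Section \ref{A} holds with $X_N:=L$, $p=2$, weight vector $\bw=\bw_m(\xi)=\frac1m(w(\xi_1),\ldots,w(\xi_m))$, and $C_1=1/\sqrt{2}$. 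Since $w(x)=\pi^{-1}\sqrt{1-x^2}\le 1/\pi$ on $[-1,1]$, one has $\sum_{j=1}^m w_j\le 1/\pi\le 1$, so condition \textbf{A2} holds with $C_2=1$.

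Next I would apply Remark \ref{AR1} (the variant of Theorem \ref{AT1} bounding the weighted least-squares error by $d(f,X_N)_\infty$) to this single subspace $L$. With $p=2$, $C_1=1/\sqrt{2}$, $C_2=1$ the prefactor becomes $2C_1^{-1}C_2^{1/2}+1=2\sqrt{2}+1<5$, yielding
\begin{equation*}
\|f-\ell 2\bw_m(\xi)(\xi,L)(f)\|_{L_2(\mu_\al)} \le (2\sqrt{2}+1)\,d(f,L)_\infty
\end{equation*}
for every $f\in \cC[-1,1]$. Minimising the left-hand side over $L\in\cX_v(\CB_\al^N)$ and using $\min_{L}d(f,L)_\infty=\sigma_v(f,\CB_\al^N)_\infty$, then taking the supremum over $f\in\mathbf{F}$ and the infimum over $\xi\in [-1,1]^m$, gives the claimed bound with constant $2\sqrt{2}+1\le 5$.

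The main obstacle is essentially bookkeeping rather than conceptual: one must verify that Theorem \ref{AT1}/Remark \ref{AR1} is directly applicable in the present weighted setup, which forces us to track the constant $C_2$ (the total weight) as well as $C_1$. Provided the weights $w(\xi_j)/m$ used in Theorem \ref{thm-6-1} and in the definition of $\ell 2\bw_m(\xi)$ are exactly matched, and the trivial bound $w(x)\le 1/\pi$ is used for $C_2$, the argument goes through without modification. All the genuinely nontrivial work — building points that work simultaneously for every $L\in\cX_v(\CB_\al^N)$ despite the growth $L_n^\al(1)\sim n^{\al+1/2}$ — has been packaged into Theorem \ref{thm-6-1}.
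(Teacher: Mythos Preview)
Your proposal is correct and follows exactly the route implicit in the paper: combine the weighted universal discretization supplied by Theorem \ref{thm-6-1} with the general weighted conditional estimate of Theorem \ref{AT1}/Remark \ref{AR1}, mirroring how Theorems \ref{IT4} and \ref{BT2} were deduced from Theorems \ref{IT3}/\ref{IT2} and \ref{BT1}. Your constant bookkeeping (\(C_1=1/\sqrt{2}\), \(C_2\le 1\), hence \(2C_1^{-1}C_2^{1/2}+1=2\sqrt{2}+1<5\)) is in fact slightly sharper than what the paper records.
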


For the remainder of this section, we restrict our attention to  some    special function classes where useful estimates on the quantities $\sigma_v(\mathbf{F},\CB_\al^N)_\infty$ can be obtained.

The following nonperiodic  Wiener-type class of functions   have been considered  by  Rauhut and Ward \cite[Section 7]{RW}  for the special case of $\al=0$ (i.e., the case of Legendre polynomials),
and by  Jahn, Ullrich, and   Voigtlaender \cite[Definition 4.7]{JUV} for both  $\al=0$ and $\al=-\f12$ (i.e., the case of  Chebyshev polynomials).
We refer to \cite{RW, JUV} for more background information on these spaces.

\begin{Definition}
	
	For $\alpha >-\f12$, $r>\frac{1}{2}+\alpha$ and $0<\ta \leq 1$, we  define
	\begin{align*}
	\mathcal{A}_{\alpha, \ta}^r:=\Bigl\{f\in C[-1,1]:\   \  \|f\|_{	\mathcal{A}_{\alpha, \ta}^r}^\ta:=
	\sum_{n \in \mathbb{N}_0}\left|(1+n)^r\left\langle f, L^\al_n\right\rangle_{L^2\left(\mu_\alpha\right)}\right|^\ta\leq 1\Br\}.
	\end{align*}

\end{Definition}
 The condition $r>\al+\f12$ in the above definition ensures the corresponding space can be continuously embedded into  the space $\cC[-1,1]$.

Our next aim is to  determine the sharp asymptotic order of the quantities $\sigma_n\left(\mathcal{A}_{\al,\ta}^r, \mathcal{B}_\al\right)_{\infty}$.
In the case of Legendre polynomials (i.e., $\al=0$), it was  conjectured in  \cite[Remark 4.9]{JUV} that  $\sigma_n\left(\mathcal{A}_{0,1}^r, \mathcal{B}_0\right)_{\infty} \lesssim n^{-r}$  for any  $r>\f12$.
Our main result, Theorem \ref{thm-8-3} below, gives a  bound   that is  better than this conjectured one.  In particular, the sharp  estimates we obtained below can be used to improve  and extend the estimates of Corollary 4.8 of \cite{JUV}.

\begin{Theorem}\label{thm-8-3}
	Let $0<\ta\leq 1$  and   $\al>-\f12$. Then for $r>\al+\f12$, we have
 	$$c_1  n^{-(r+\f 1\ta-\f12)}\leq \sigma_n\left(\mathcal{A}_{\al,\ta}^r, \mathcal{B}_\al\right)_{\infty}\leq C_2 n^{-(r+\f 1\ta-\f12)}
	$$
	for some positive constants $c_1$ and $C_2$ depending only on $\al$, $\ta$ and $r$.
\end{Theorem}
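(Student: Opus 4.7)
The result splits into matching upper and lower bounds, which we treat separately.

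For the \emph{lower bound}, the plan is to exploit the trivial inequality $\sigma_n(f,\mathcal{B}_\al)_\infty \geq \sigma_n(f,\mathcal{B}_\al)_{L_2(\mu_\al)}$, valid since $\mu_\al$ is a probability measure, and to exhibit an explicit extremizer. Setting $N = 2n$, consider
$$f_* := N^{-1/\ta}\sum_{k=N}^{2N-1} (k+1)^{-r} L_k^\al.$$
A direct computation shows $\|f_*\|_{\mathcal{A}_{\al,\ta}^r}$ is bounded by a constant. Since all Gegenbauer coefficients of $f_*$ are of comparable magnitude $\asymp N^{-1/\ta-r}$, after discarding any $n$ of them Parseval yields
$$\sigma_n(f_*,\mathcal{B}_\al)_{L_2(\mu_\al)}^2 \asymp n\cdot N^{-2/\ta-2r} \asymp n^{1-2/\ta-2r},$$
which delivers the claimed lower bound $c_1 n^{-(r+1/\ta-1/2)}$.

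For the \emph{upper bound}, given $f=\sum c_k L_k^\al \in \mathcal{A}_{\al,\ta}^r$ I aim to build an $n$-term approximant $g_n$ with $\|f - g_n\|_\infty \leq C n^{-(r+1/\ta-1/2)}$. Naive truncation $g_n := \sum_{k \leq n} c_k L_k^\al$, combined with the crude pointwise bound $|L_k^\al(x)| \leq C(k+1)^{\al+1/2}$, yields only the weaker rate $n^{-(r-\al-1/2)}$, so a sharper construction is required. My plan is to decompose $f$ dyadically into spectral blocks $\Delta_j f = \sum_{k \in [2^j,2^{j+1})} c_k L_k^\al$ and combine two ingredients: the refined pointwise estimate $|L_k^\al(x)| \leq C\min\{(k+1)^{\al+1/2},(1-x^2)^{-\al/2-1/4}\}$ recorded in \eqref{1-1-eq-ch2}, and the Nikolskii-type inequality $\|P\|_\infty \leq C M^{\al+1} \|P\|_{L_2(\mu_\al)}$ for polynomials of degree $\leq M$, which follows from the Christoffel--Darboux diagonal bound $K_M(x,x) \lesssim M^{2\al+2}$. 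Within layer $j$ the hypothesis gives $\sum_{k \in I_j}|c_k|^\ta \leq 2^{-jr\ta}$; I would allocate a sparsity budget $n_j$ to layer $j$, keep the $n_j$ largest coefficients there, and bound the discarded portion in $L_\infty$ via the Nikolskii inequality applied to the dyadic spectral block. Optimizing $\{n_j\}$ subject to $\sum_j n_j \leq n$ (a geometric allocation is natural) then balances the per-layer contributions and should produce the target exponent $r+1/\ta-1/2$.

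The \emph{main obstacle} is to verify that the combined effect of in-layer $L_2$-sparse approximation and the Nikolskii step actually delivers the extra gain of order $n^{\al+1/2}$ over the truncation rate, rather than the weaker Nikolskii-only gain. This will require treating the boundary regions $x \to \pm 1$ separately from the interior, using the endpoint-sensitive pointwise bound in place of the uniform $(k+1)^{\al+1/2}$ estimate wherever the latter is wasteful, and a careful calibration of the per-layer budget so that each layer's contribution matches the overall target uniformly in $j$. Matching the lower-bound construction above indicates that the Nikolskii constant $M^{\al+1}$ is tight on the relevant spectral blocks, which suggests that a geometric allocation of the form $n_j \sim 2^{-\ga j}n$ with an appropriate $\ga = \ga(r,\ta,\al)$ is the correct calibration.
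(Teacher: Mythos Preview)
Your lower bound is correct and in fact cleaner than the paper's. Since $\mathcal{B}_\al$ is itself the orthonormal basis of $L_2(\mu_\al)$, best $n$-term $L_2$ approximation is realized by keeping the $n$ largest coefficients, so your Parseval computation on the explicit extremizer $f_*$ gives the lower bound directly. The paper instead invokes Kashin's theorem on $m$-term approximation by orthonormal systems, which is unnecessary here because the approximating dictionary coincides with the basis in which the class is defined.

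Your upper bound plan, however, has a genuine gap. The scheme ``keep the $n_j$ largest coefficients in layer $j$ (Stechkin in $\ell^2$) and pass to $L_\infty$ via the Nikolskii inequality $\|P\|_\infty\le C M^{\al+1}\|P\|_{L_2(\mu_\al)}$'' loses a full factor $M^{\al+1}$. Already at the leading layer $j\sim\log_2 n$ this produces an error of order
\[
2^{j(\al+1)}\cdot 2^{-jr}\cdot n_j^{1/2-1/\ta}\;\asymp\; n^{-(r+1/\ta-1/2)}\cdot n^{\al+1},
\]
which is off from the target by $n^{\al+1}\ge n^{1/2}$. Replacing Nikolskii by the endpoint-sensitive pointwise bound \eqref{1-1-eq-ch2} does not help: estimating $|R_j(x)|\le C\min\{2^{j(\al+1/2)},(1-x^2)^{-\al/2-1/4}\}\sum_{k\in S_j}|c_k|$ and taking the supremum over $x$ still returns a factor $2^{j(\al+1/2)}$ (the minimum is attained near $1-x^2\sim 2^{-2j}$), and the resulting rate is again off by a polynomial factor. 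No choice of the allocation $\{n_j\}$ can absorb a loss that is polynomial in $n$ at a single layer.

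The paper closes this gap with two ingredients you are missing. First, after stripping off the $\sim n/4$ globally largest weighted coefficients (which forces the uniform bound $|c_k|(1+k)^r\lesssim n^{-1/\ta}$ on what remains), it applies a greedy algorithm in $L_p$ for a fixed $p$ with $\max\{2,(2\al+2)/r\}<p<(2\al+2)/(\al+\tfrac12)$; in this range $\|L_k^\al\|_p\le C(\al)$ is bounded, so the $2$-smoothness of $L_p$ yields $\|\Delta_k-p_{k,1}\|_p\le C n_k^{-1/2}\sum_j|c_j|$. Second, the passage from $L_p$ to $L_\infty$ is not Nikolskii but the extrapolation lemma (Lemma~\ref{2-1-ch2}, from \cite{Dai06}):
\[
\sigma_n(f,\mathcal{B}_\al^{2N})_\infty \le C\Bigl(\frac{N}{n}\Bigr)^{(2\al+2)/p}\Bigl(\log\frac{eN}{n}\Bigr)^{(2\al+1)/p}\|f\|_{L_p(\mu_\al)}.
\]
The crucial difference from Nikolskii is that the loss is $(N/n)^{(2\al+2)/p}$ rather than $N^{(2\al+2)/p}$; at the leading layer $N\sim n$ this factor is $O(1)$, and for higher layers the decay in $j$ coming from $2^{-kr}$ with $r>(2\al+2)/p$ makes the sum converge. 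These two tools---greedy $L_p$ approximation and the $L_p\to L_\infty$ extrapolation for sparse approximation---are the substantive ideas your plan is missing.
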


\begin{proof} We start with the proof of the lower estimates, which is simpler due to the following deep result of Kashin \cite[Corollary 2]{Kas}.

\begin{Lemma} \label{2-6-ch2} Let $( \Omega, \mu)$ be a probability space, and let  $\D:=\{ \phi_j\}_{j=1}^\infty$ be an orthonormal basis of the space $L_2(\Og, \mu)$.  Then  there exists a universal constant $c_0\in (0, 1)$ such that  for any  orthonormal system   $\{\psi_j\}_{j=1}^N\subset L_2(\Og, \mu)$, any integer $N\in\NN$   and for the  function class  $$ \bw_N:=\Bl\{ \sum_{ j=1}^N a_j \psi_j:\   \    \max_{ 1\leq j\leq N}
	|a_j|\leq 1\Br\},$$ we have
	\[ \sa_n (\bw_N, \D)_{L_2(\Og, \mu)}\ge \f {\sqrt{3N}} 2,\    \    \   \  \forall n\in [1, c_0N]\cap\NN.\]
	
\end{Lemma}

	Let $m\in \NN$ be such that $2^m \leq n< 2^{m+1}$. Let $m_1=m+m_0$, where  $m_0$ is  the smallest integer such that $2^{m_0-1} c_0>1$.  Define
	\[ W:= \Bl\{  2^{-(m_1+1) (r+\f 1\ta)}\sum_{2^{m_1}\leq j<2^{m_1+1}} a_j L_j^\al:\  \ a_j\in\RR,\   \ |a_j|\leq 1\Br\}.\]
	Since $n<2^{m+1} \leq c_0 2^{m_1}$, using Lemma \ref{2-6-ch2} with $N=2^{m_1}$, we deduce
	\be \sa_n (W, \CB_\al)_{L_2(\mu_\al)}\ge \f {\sqrt{3N} } 2  2^{-(m_1+1) (r+\ta^{-1})}\ge c n^{-(r+\f 1\ta-\f12)}.\label{8-4}\ee
	On the other hand,  however, for any $$f= 2^{-(m_1+1) (r+\f 1\ta)}\sum_{2^{m_1}\leq j<2^{m_1+1}} a_j L_j^\al\in W,$$ we have
	\begin{align*} \|f\|_{\cA_{\al,\ta}^r}&\leq 2^{-(m_1+1) (r+\f 1\ta)}\Bl( \sum_{2^{m_1}\leq j<2^{m_1+1}} (j+1)^{r\ta}\Br)^{1/\ta}\\
	&\leq 2^{-(m_1+1) (r+\f 1\ta)} 2^{(m_1+1) r} 2^{m_1/\ta}\leq 1
	\end{align*}
	implying that $W\subset \cA_{\al,\ta}^r$.
	Thus, using \eqref{8-4}, we obtain
	\begin{align*}
	 \sa_n (\cA_{\al,\ta}^r, \CB_\al)_{\infty}\ge \sa_n (W, \CB_\al)_{L_2(\mu_\al)}\ge c n^{-(r+\f 1\ta-\f12)},
	\end{align*}
	which gives the desired lower bound.

It remains to prove  the upper bound.
We  need
the following result, which follows directly from  \cite[Theorem 1.2]{Dai06}.

\begin{Lemma}\label{2-1-ch2}\textnormal{\cite[Theorem 1.2]{Dai06}}
	Assume that  $\al>-\f12$, $N\in\NN$ and $2\leq p<\infty$. Then for any $f\in \mathcal{P}^N$ and any integer $1\leq n\leq N$,
	\begin{equation}\label{2-1-eq-ch2} \sa_n(f;  \mathcal{B}_\al^{2N} )_{\infty} \leq C(\al)
	\Bigl ( \frac Nn\Bigr )
	^{(2\al+2)/p}
	\Bigl (\log \frac{e N}n\Bigr)^{(2\al+1)/p}\|f\|_{L_p(\mu_\al)}.\end{equation}
\end{Lemma}

Now we turn to the proof of the upper bound.
	Let $m\in \NN$ be such that $2^m \leq n< 2^{m+1}$. Without loss of generality, we may assume that  $m\ge 20$.
	Let $f\in \mathcal{A}_{\al,\ta}^r([-1,1])$ be such that
	\begin{equation}\label{2-1a}
	\sum_{j \in \mathbb{N}_0}(1+j)^{r\ta}\left|\left\langle f, L_j^\al\right\rangle\right|^\ta=1.
	\end{equation}
	Here and elsewhere in  the proof, $\langle\cdot,\cdot\rangle$ denotes the inner product of $L_2(\mu_\al)$.
	Let $\Ld_0\subset\NN_0$ be such that $|\Ld_0| =2^{m-2}$, and
	\[ \min_{j\in\Ld_0}|\langle f, L_j^\al \rangle| (1+j)^r \ge  \sup_{j\in\NN_0\setminus \Ld_0}|\langle f, L_j^\al \rangle| (1+j)^r.\]
	Then \eqref{2-1a} implies that
	\begin{equation}\label{2-2a}
	|\langle f, L_j^\al \rangle|^\ta (1+j)^{r\ta}\leq  2^{-m+2},\    \    \  \forall j\in\NN_0\setminus \Ld_0.
	\end{equation}
	Let $\Ld_1:=\Ld_0\cup \{j\in\NN_0:\  \ j<  2^{m-2}\}$.
	Let
	$$g_1=\sum_{j\in \Ld_1} \langle f, L_j^\al\rangle  L_j^\al\  \ \text{and}\   \ f_1=f-g_1=\sum_{j\in \NN_0\setminus \Ld_1} \langle f, L_j^\al\rangle  L_j^\al.$$
	We write  $f_1=\sum_{k=m-1}^\infty \Delta_k,$
	where
	$ \Delta_k: =\sum_{ 2^{k-1}\leq j<2^k} \langle f_1, L_j^\al\rangle L_j^\al.$

	Let $\k_0$ denote the smallest  integer
	$\ge \f {r+\f 1\ta-\f12}{r-\al-\f12} +1$.
	Using \eqref{1-1-eq-ch2} and  \eqref{2-1a}, we obtain
	\begin{align}
	\Bl\|\sum_{k=\k_0m+1}^\infty \Delta_k \Br\|_\infty& = \Bl\|\sum_{j\ge 2^{\k_0 m}} \langle f_1, L_j^\al\rangle L_j^\al \Br\|_\infty
	\leq  C\sum_{j\ge 2^{k_0 m}}|\langle f_1,L_j^\al\rangle | j^{(\al+\f12)} \notag\\
	&\leq C 2^{-\k_0 m(r-\al-\f12)}\sum_{j\ge 2^{\k_0 m}}|\langle f_1,L_j^\al\rangle | j^{r}\notag\\
	&\leq C 2^{- m(r+\f 1\ta -\f12 )}  \Bl(\sum_{j\ge 2^{\k_0 m}}|\langle f_1,L_j^\al\rangle |^\ta  j^{r\ta}\Br)^{1/\ta}\leq C n^{-(r+\f 1\ta-\f12)}.\label{2-3a}
	\end{align}

	Thus, to complete the proof, it is enough to show there exist   a sequence $\{n_k\}_{k=m-1}^{\k_0 m}$ of positive integers and  a sequence $\{p_k\}_{k=m-1}^{\k_0m} $ of polynomials such that $\sum_{k=m-1}^{\k_0 m} n_k\leq 2^{m-1}$, $p_k\in \Sigma_{n_k} (\CB_\al)$, and
	\begin{equation}\label{2-4a}
	\sum_{k=m-1}^{\k_0 m} \|\Delta_k -p_k\|_\infty \leq C n^{- (r+\f 1\ta -\f12) }.
	\end{equation}
	 Indeed, once this is proven, then  setting  $$g:=g_1+\sum_{k=m-1}^{\k_0 m} p_k,$$
	we have $g\in\Sigma_n(\CB_\al)$ and
	\[ \sa_n(f, \CB_\al)_\infty\leq \|f-g\|_\infty \leq \sum_{k=m-1}^{\k_0 m} \|\Delta_k-p_k\|_\infty +\Bl\|\sum_{k=\k_0m+1}^\infty \Delta_k \Br\|_\infty,\]
	which, combined with \eqref{2-3a} and \eqref{2-4a}, will  yield the desired upper estimate.

	The sequence  $\{n_k\}_{k=m-1}^{\k_0 m}$ of positive integers is defined as follows:
	  $$n_k: = [ (
	k-m+2)^{-2}  \cdot 2^{m-2}],\  \ m-1\leq k \leq \k_0 m.$$  Clearly,  $n_k\leq 2^{k-1}$ for $k\ge m-1$,  and
	\[ \sum_{k=m-1}^{
		\k_0 m } n_k\leq   2^{m-2}\sum_{k=1}^{
		\infty}  \f{1} {k^2}
	\leq 2^{m-1},\]
	 where we recall that   the notation $[a]$  denotes the largest integer $\leq a$ for each given  $a>0$.
	It remains to show the existence of a sequence of  polynomials $p_k\in \Sigma_{n_k} (\CB_\al)$,  $m-1\leq k\leq \k_0 m$ satisfying \eqref{2-4a}.

%

	
	To this end, let $$\max\Bl\{2,  \f {2\al+2}r\Br\}<p<\f {2\al+2} {\al+\f12}$$ be a fixed parameter depending only on $r$ and  $\al$.  By \eqref{1-1-eq-ch2}, we have
	\[\|L_n^\al\|_p \leq C(\al).\]
	Since the Banach space $L_p$ is $2$-smooth for $p>2$, it follows by Theorem~9.2.2 of \cite[p. 455]{VTbookMA} that
	for  each
	integer $ k\in [ m-1, \k_0 m]$,  there exists   a polynomial $ p_{k,1}\in \Sigma _{[n_k/2]}(\CB_\al^{2^{k+1}})$ such
	that
	\begin{align*}
	\|\Delta_k - p_{k,1} \|_p &\leq C \sqrt{p}n_k^{-\f12} \sum_{j= 2^{k-1}}^{2^k} |\langle f_1, L_j^\al\rangle | \|L_j^\al\|_p
	\leq C n_k^{-\f12} 2^{-kr}
	\sum_{j= 2^{k-1}}^{2^k} |\langle f_1, L_j^\al\rangle | j^r \\
	&\leq C 2^{-m(\f 1 \ta -\f12)}  2^{-kr} (k-m+2).
	\end{align*}
	On the other hand, for each
	integer $ k\in [ m-1, \k_0 m]$,  applying  Lemma
	$\ref{2-1-ch2}$ to the polynomial $\Delta_k - p_{k,1}\in \mathcal{P}^{2^{k+1}}$, we obtain   $ p_{k,2}\in \Sigma _{[n_k/2]}(\CB_\al)$ such
	that
	\begin{align*}
	\|\Delta_k - p_{k,1}-p_{k,2} \|_\infty &\leq C   \|\Delta_k-p_{k,1}\|_{p} \Bigl( \f {
		2^k}{ n_k} \Bigr) ^{(2\al+2)/p} \Bigl( \log \f {e 2^k } { n_k} \Bigr) ^{(2\al+1)/p}\\
	&\leq C 2^{-m(\f 1 \ta -\f12)}  2^{-kr}  2^{(k-m+1)(2\al+2)/p}(k-m+2)^{\f {6\al+5}p+1}.
	\end{align*}
	Let $p_k=p_{k,1}+p_{k,2}$, Then $p_k\in\Sigma_{n_k}$ and moreover,
	\begin{align*}
	\sum_{k=m-1}^{\k_0 m} \|\Delta_k - p_{k} \|_\infty&=\sum_{k=m-1}^{\k_0 m} \|\Delta_k - p_{k,1}-p_{k,2} \|_\infty \\
	&\leq C  2^{-m(r+\f 1\ta -\f12)} \sum_{k=1}^\infty  2^{-k \bigl(r -\f{2\al+2} p\bigr)}(k+2)^{\f {6\al+5}p+1}\\
	&\leq C n^{-r-\f 1\ta +\f12}.
	\end{align*}

\end{proof}

Combining  Theorem \ref{thm-8-3} with Theorem \ref{thm-8-2}, we deduce

\begin{Corollary} For  $\al>-\f12$, any integer $1\leq v\leq N$,  $0<\ta\leq 1$ and $r>\al+\f12$, we have
	\begin{align*}
	\varrho_{m,v}^{wls}&(\cA_{\al,\ta}^r,\CB_\al^N,L_2(\mu_\al) )
	 C(\al,\ta, r) 
	n^{-(r+\f 1\ta -\f12)}
	\end{align*}
	provided
	$$
	m \ge   C(\al)  v \log N\cdot \log^2(2v) \cdot (\log (2v)+\log\log N).
	$$

\end{Corollary}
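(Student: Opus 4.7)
The plan is a direct concatenation of the two main results of Section~\ref{G}: Theorem~\ref{thm-8-2}, which bounds the weighted least-squares recovery characteristic by the $L_\infty$ best $v$-term approximation error, and the upper bound in Theorem~\ref{thm-8-3}, which controls $\sigma_v(\cA_{\al,\ta}^r,\CB_\al)_\infty$. There is no essentially new content; all the heavy lifting — the universal discretization (Theorem~\ref{BT1} applied to the uniformly bounded system $\{\vi_n^\al\}$), the Lebesgue-type inequality (Theorem~\ref{IT2}), and the delicate sparse $L_\infty$ approximation estimate for $\cA_{\al,\ta}^r$ — is already packaged in these two results.

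First, I would verify that $\cA_{\al,\ta}^r$ is a compact subset of $\cC[-1,1]$, which follows from the hypothesis $r>\al+\f12$ together with the size estimate \eqref{1-1-eq-ch2} for $L_n^\al$: these ensure uniform absolute convergence of the Gegenbauer expansion of every $f\in\cA_{\al,\ta}^r$ and equicontinuity of the class. This places $\cA_{\al,\ta}^r$ in the setting required by Theorem~\ref{thm-8-2}, which then gives
\[
  \varrho_{m,v}^{wls}\bigl(\cA_{\al,\ta}^r,\CB_\al^N,L_2(\mu_\al)\bigr)
  \le 5\,\sigma_v\bigl(\cA_{\al,\ta}^r,\CB_\al^N\bigr)_\infty,
\]
provided $m\ge C(\al)v\log N\cdot\log^2(2v)\cdot(\log(2v)+\log\log N)$.

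Next, I would invoke the upper bound in Theorem~\ref{thm-8-3}, namely $\sigma_v(f,\CB_\al)_\infty \le C(\al,\ta,r)\,v^{-(r+1/\ta-1/2)}$ uniformly for $f\in\cA_{\al,\ta}^r$. The one point requiring mild care is that Theorem~\ref{thm-8-2} uses the truncated dictionary $\CB_\al^N$, whereas Theorem~\ref{thm-8-3} allows Gegenbauer polynomials of arbitrarily high degree. Inspecting the construction in the proof of Theorem~\ref{thm-8-3}, however, the approximant is built from $L_j^\al$ with $j\lesssim 2^{\k_0 m}\lesssim v^{\k_0}$ for a fixed integer $\k_0=\k_0(\al,\ta,r)$; hence for $N\ge v^{\k_0}$ the same approximant lies in $\Sigma_v(\CB_\al^N)$ and the bound transfers verbatim. (For smaller $N$ the statement still makes sense, but the regime where it is nontrivial is precisely this one.)

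Combining the two inequalities yields the Corollary with constant $C(\al,\ta,r)=5C_2(\al,\ta,r)$ and with $v$ in the exponent in place of the $n$ appearing in the displayed bound. The only anticipated obstacle is the bookkeeping in the previous paragraph — matching the degree range of the approximants from Theorem~\ref{thm-8-3} with the polynomial cut-off $\CB_\al^N$ used in Theorem~\ref{thm-8-2} — but this is a one-line observation rather than a genuine difficulty.
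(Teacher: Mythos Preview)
Your proposal is correct and follows exactly the paper's approach: the Corollary is obtained by simply combining Theorem~\ref{thm-8-2} with the upper bound of Theorem~\ref{thm-8-3}. Your additional remarks on compactness of $\cA_{\al,\ta}^r$ and on matching the degree range of the approximants with the truncated dictionary $\CB_\al^N$ are careful verifications that the paper leaves implicit.
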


\section{Discussion}
\label{D}

We pointed out in the Introduction that the recent paper \cite{JUV} motivated us to estimate 
 the accuracy of the recovery algorithm  $LS(\xi,\cX_v(\D_N))(\cdot)$ in terms of errors of 
 sparse approximation with respect to special systems $\D_N)$. We begin this section with 
 providing one more argument that motivated us to closely study the least squares algorithms. It is based on the recent result from \cite{VT183} on the linear recovery.  
 Recall the setting 
 of the optimal linear recovery. For a fixed $m$ and a set of points  $\xi:=\{\xi^j\}_{j=1}^m\subset \Omega$, let $\Phi $ be a linear operator from $\bbC^m$ into $L_p(\Omega,\mu)$.
Denote for a class $\bF$ (usually, centrally symmetric and compact subset of $L_p(\Omega,\mu)$)
$$
\varrho_m(\bF,L_p) := \inf_{\text{linear}\, \Phi; \,\xi} \sup_{f\in \bF} \|f-\Phi(f(\xi^1),\dots,f(\xi^m))\|_p.
$$
In other words, for  given  $m$, a system of functions $\psi_1(\bx),
\dots,\psi_m (\bx)$ from $L_p(\Omega,\mu)$, and a set  
$\xi := \{\xi^1,\ldots,\xi^m\}$ from $\Omega$ we define the linear operator
\be\label{I1}
 \Psi_m(f,\xi) :=\sum_{j=1}^{m}f(\xi^j)\psi_j (\bx).
\ee
For a class $\bF$ of functions we define the quantity
$$
\Psi_m (\bF,\xi)_p:=\sup_{f\in \bF}\bigl\|\Psi_m (f,\xi) - f\bigr\|_p.
$$
Then
$$
\varrho_m (\bF, L_p) :=\inf_{\psi_1,\dots,\psi_m;\xi^1,\dots,\xi^m}
\Psi_m (\bF,\xi).
$$
 
 In the above described linear recovery procedure the approximant  comes from a linear subspace of dimension at most $m$. 
It is natural to compare $\varrho_m(\bF,L_p)$  with the 
Kolmogorov widths. Let $X$ be a Banach space and $\bF\subset X$ be a  compact subset of $X$. The quantities  
$$
d_n (\bF, X) :=  \inf_{\{u_i\}_{i=1}^n\subset X}
\sup_{f\in \bF}
\inf_{c_i} \left \| f - \sum_{i=1}^{n}
c_i u_i \right\|_X, \quad n = 1, 2, \dots,
$$
are called the {\it Kolmogorov widths} of $\bF$ in $X$. It is clear that in the case $X=H$ is a Hilbert space the best approximant from a given subspace is provided by the corresponding orthogonal projection. 

We have the following obvious inequality
\be\label{I2}
d_m (\bF, L_p)\le  \varrho_m(\bF,L_p).
\ee

In the paper \cite{VT183} we considered the case $p=2$, i.e. recovery takes place in the Hilbert space $L_2$.
The main result of that paper is the following general inequality.
\begin{Theorem}\label{IT1}  There exist two positive absolute constants $b$ and $B$ such that for any   compact subset $\Omega$  of $\R^d$, any probability measure $\mu$ on it, and any compact subset $\bF$ of $\C(\Omega)$ we have
$$
\ro_{bn}(\bF,L_2(\Omega,\mu)) \le Bd_n(\bF,L_\infty).
$$
\end{Theorem}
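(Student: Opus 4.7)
\medskip

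\noindent\textit{Proof proposal for Theorem \ref{IT1}.} The plan is to reduce the statement to a combination of two ingredients already developed in the paper: a Marcinkiewicz-type $L_2$ discretization theorem for a single $n$-dimensional subspace (this time with a sample size linear in $n$), and the conditional recovery bound from Theorem \ref{AT1} together with Remark \ref{AR1}. The whole argument is parallel to the proof of Theorem \ref{IT2}, but applied to a fixed optimal $n$-dimensional subspace rather than a collection $\cX_v(\D_N)$.

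First, fix $\varepsilon>0$ and choose an $n$-dimensional subspace $X_n\subset\C(\Omega)$ (which we may assume is contained in $\C(\Omega)$ after approximation) such that
\[
d(\bF,X_n)_\infty := \sup_{f\in\bF}\inf_{u\in X_n}\|f-u\|_\infty \le (1+\varepsilon)\,d_n(\bF,L_\infty).
\]
Next, invoke the $L_2$ Marcinkiewicz discretization theorem: there exist absolute constants $b\ge 1$ and $C_1>0$ such that for every $n$-dimensional subspace $X_n\subset\C(\Omega)$ one can find $m\le bn$ points $\xi=\{\xi^j\}_{j=1}^m\subset\Omega$ with
\[
C_1\|u\|_2^2 \le \frac{1}{m}\sum_{j=1}^m|u(\xi^j)|^2 \le C_1^{-1}\|u\|_2^2,\qquad \forall u\in X_n.
\]
This is the standard one-subspace analog of the universal discretization used for $\cX_v(\D_N)$; it follows from the line of results leading to the Nitzan--Olevskii--Ulanovskii theorem and its refinements, which give a linear-in-$n$ bound on the number of sampling points for a single subspace.

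Having fixed such $\xi$, apply the weighted least squares operator $LS(\xi,X_n)=\ell2\bw_m(\xi,X_n)$ with weights $\bw_m=(1/m,\dots,1/m)$. Then conditions \textbf{A1} (with $p=2$ and constant $C_1^{1/2}$) and \textbf{A2} (with $C_2=1$) of Section \ref{A} are satisfied, so Remark \ref{AR1} yields for every $f\in\C(\Omega)$
\[
\|f-LS(\xi,X_n)(f)\|_{L_2(\Omega,\mu)} \le (2C_1^{-1/2}+1)\,d(f,X_n)_\infty.
\]
Since $LS(\xi,X_n)$ is a linear operator depending only on the sample vector $S(f,\xi)$, it qualifies as a linear recovery scheme based on $m\le bn$ samples. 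Taking the supremum over $f\in\bF$ and combining with the choice of $X_n$ gives
\[
\varrho_{bn}(\bF,L_2(\Omega,\mu)) \le (2C_1^{-1/2}+1)(1+\varepsilon)\,d_n(\bF,L_\infty),
\]
and letting $\varepsilon\to 0$ produces the inequality with an absolute constant $B:=2C_1^{-1/2}+1$.

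The main obstacle, and the deepest input, is the Marcinkiewicz-type discretization with $m$ comparable to $n$ and with absolute constants. Universal discretization bounds as in Theorem \ref{BT1} lose polylogarithmic factors in $n$, which would only yield $\varrho_{bn\log^c n}(\bF,L_2)\le B d_n(\bF,L_\infty)$. To obtain sharp $m\asymp n$ one must appeal to the Batson--Spielman--Srivastava subsampling machinery (via the Nitzan--Olevskii--Ulanovskii route), applied subspace-by-subspace; this is where the nontrivial work is concentrated. The remaining steps --- existence of the near-optimal $X_n$, verifying \textbf{A1}/\textbf{A2}, and running Remark \ref{AR1} --- are essentially bookkeeping.
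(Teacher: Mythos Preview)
The paper does not actually prove Theorem \ref{IT1}; it quotes the result from \cite{VT183} and only remarks that the proof there proceeds via the \emph{weighted} least squares algorithms $\ell 2\bw(\xi,X_N)$. Your outline --- choose a near-optimal $n$-dimensional subspace $X_n$, discretize on it, then apply the error bound of Remark \ref{AR1} --- matches exactly the strategy the paper attributes to \cite{VT183}.

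There is, however, a genuine gap in the form you wrote it. You invoke an \emph{unweighted} Marcinkiewicz discretization with $m\le bn$ points,
\[
C_1\|u\|_2^2 \le \frac{1}{m}\sum_{j=1}^m|u(\xi^j)|^2 \le C_1^{-1}\|u\|_2^2,\qquad u\in X_n,
\]
valid for an arbitrary $n$-dimensional subspace $X_n\subset\C(\Omega)$ with absolute constants. No such theorem is known in this generality; unweighted $L_2$ discretization with $m\asymp n$ is available only under extra hypotheses (e.g.\ a Nikol'skii-type inequality, as in \cite{LimT}). What the Batson--Spielman--Srivastava route actually delivers is a \emph{weighted} discretization: points $\xi^1,\dots,\xi^m$ with $m\le bn$ and positive weights $w_1,\dots,w_m$ with $\sum_j w_j\le C_2$ such that
\[
C_1\|u\|_2^2 \le \sum_{j=1}^m w_j|u(\xi^j)|^2 \le C_1^{-1}\|u\|_2^2,\qquad u\in X_n.
\]
This is precisely why \cite{VT183} (and the paper's formulation of condition {\bf A2}) works with the weighted operator $\ell 2\bw(\xi,X_n)$ rather than $LS(\xi,X_n)$. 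Once you replace the uniform weights $1/m$ by these BSS weights $\bw=(w_1,\dots,w_m)$, conditions {\bf A1} and {\bf A2} are both satisfied with absolute constants, Remark \ref{AR1} applies verbatim, and since $\ell 2\bw(\xi,X_n)$ is still a linear sampling operator, the bound on $\varrho_{bn}(\bF,L_2)$ follows. So your architecture is correct; only the ``uniform weights'' claim needs to be dropped.
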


Theorem \ref{IT1} was proved in \cite{VT183} with a help of a classical type of algorithms -- weighted least squares algorithms $\ell 2\bw(\xi,X_N)(\cdot)$ defined in the Introduction.

We discussed above two important recovery characteristics -- linear recovery error $\varrho_m(\bF,L_2)$
and sparse recovery error -- $\varrho^{ls}_{m,v}(\bF,\D_N,L_2)$. There are two more quantities, which are popular in the theory of optimal recovery. The following modification of the linear recovery procedure is also of interest. We now allow any mapping $\Phi : \bbC^m \to X_N \subset L_p(\Omega,\mu)$ where $X_N$ is a linear subspace of dimension $N\le m$ and define
$$
\varrho_m^*(\bF,L_p) := \inf_{\Phi; \xi; X_N, N\le m} \sup_{f\in \bF}\|f-\Phi(f(\xi^1),\dots,f(\xi^m))\|_p.
$$
We have the following obvious inequalities
\be\label{D1}
d_m (\bF, L_p)\le \varrho_m^*(\bF,L_p)\le \varrho_m(\bF,L_p).
\ee
One more quantity is the optimal sampling recovery. We now allow any mapping $\Phi : \bbC^m \to   L_p(\Omega,\mu)$  and define
$$
\varrho_m^o(\bF,L_p) := \inf_{\Phi; \xi } \sup_{f\in \bF}\|f-\Phi(f(\xi^1),\dots,f(\xi^m))\|_p.
$$
Here, index {\it o} stands for optimal.

{\bf Comment \ref{D}.1.} The characteristics $\varrho_m$, $\varrho_m^*$, and $\varrho_m^o$  are well studied for many particular classes of functions. For an exposition of known results we refer to the books 
\cite{TWW}, \cite{NoLN}, \cite{DTU}, \cite{VTbookMA}, \cite{NW1}--\cite{NW3} and references therein. The characteristics $\varrho_m^*$ and $\varrho_m$ are inspired by the concepts of 
the Kolmogorov width and the linear width. Probably, $\varrho_m^*$ was introduced in \cite{Dinh},  $\varrho_m$ in \cite{VT51}, and $\varrho_m^o$ in \cite{TWW}.

Let us make a brief comparison of two nonlinear recovery characteristics $\varrho^{ls}_{m,v}$ and
$\varrho_m^o$. The authors of \cite{JUV} (see Theorem 4.13 there) proved the following interesting bound for $1<p<2$, $r>1/p$
\begin{equation}\label{D2}
\varrho_{m}^{o}(\bW^{r}_p,L_2(\T^d)) \le C(r,d,p)  v^{-r+1/p-1/2} (\log v)^{(d-1)(r+1-2/p)+1/2}      
\end{equation}
provided $m\ge c(d)  v(\log(2v))^4$. It is very surprising that the bound (\ref{D2}) is exactly the same
as the bound (\ref{C6}) from Theorem \ref{CT3} for the \newline $\varrho_{m,v}^{ls}(\bW^{r}_p,\Phi_N,L_2(\T^d))$. First of all, these bounds were proved by different methods. Bound (\ref{D2}) 
was proved in \cite{JUV} with the help of a powerful result from compressed sensing theory 
and bound (\ref{C6}) is based on the result on universal discretization -- Theorem \ref{BT1}. Second, Theorem \ref{CT3} provides a sparse ($v$-term) with respect to the trigonometric system approximant while (\ref{D2}) does not provide a sparse one. Third, we cannot derive 
(\ref{C6}) from (\ref{D2}) and vise versa. The problem with deriving (\ref{D2}) from (\ref{C6}) is 
as follows. In the proof of (\ref{C6}) we use the algorithm $LS(\xi,\Sigma_v(\Phi_N))(f)$ defined
in (\ref{I4}). In addition to function values at $m$ points this algorithm compares the norms
$\|f-LS(\xi,L)(f)\|_2$, $L\in \Sigma_v(\Phi_N)$, in order to choose the smallest one. We don't know if this step 
can be performed by an algorithm based exclusively on the function values at $m$ points. 
Basically, it is a question of realizing (approximately realizing) the best $v$-term approximation by 
a simple algorithm. It is known that in many cases it can be done by an appropriate greedy type
algorithm with respect to the corresponding dictionary. The reader can find such results and their discussion in \cite{VTbookMA}, Chapter 8. 

In the case of the use of the least squares algorithms we defined in \cite{VT183} the following 
characteristic
$$
\varrho_m^{ls}(\bF,L_2) := \inf_{\xi,\,X_N} \sup_{f\in \bF} \|f-\ell 2 \bw_m(\xi,X_N)(f)\|_2=\inf_{\xi,\,X_N} \sup_{f\in \bF} \|f-LS(\xi,X_N)(f)\|_2 .
$$
We proved in \cite{VT183} an analog of Theorem \ref{IT1} with the Kolmogorov $n$-width replaced by the constrained Kolmogorov $n$-width. We now formulate the corresponding result.

{\bf Condition E($t$).} We say that an orthonormal system $\{u_i(\bx)\}_{i=1}^N$ defined on $\Omega$ satisfies Condition E($t$) with a constant $t$ if for all $\bx\in \Omega$
$$
 \sum_{i=1}^N |u_i(\bx)|^2 \le Nt^2.
$$
We now define $E(t)$-conditioned Kolmogorov width
$$
d_n^{E(t)}(\bF,L_p) :=     \inf_{\{u_1,\dots,u_n\}\, \text{satisfies Condition} E(t)}   \sup_{f\in \bF}\inf_{c_1,\dots,c_n}\|f-\sum_{i=1}^n c_iu_i\|_p.
$$

The following theorem is from \cite{VT183}.
\begin{Theorem}\label{IT1ls}  There exist two positive constants $b$ and $B$, which may depend on  $t$, such that for any   compact subset $\Omega$  of $\R^d$, any probability measure $\mu$ on it, and any compact subset $\bF$ of $\C(\Omega)$ we have
$$
\varrho_{bn}^{ls}(\bF,L_2(\Omega,\mu)) \le Bd_n^{E(t)}(\bF,L_\infty).
$$
\end{Theorem}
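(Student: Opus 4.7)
The plan is to combine the conditional least squares bound of Theorem \ref{AT1} (via Remark \ref{AR1}) with a Marcinkiewicz-type discretization theorem for subspaces satisfying Condition $E(t)$. Fix $\varepsilon>0$ and choose, by the definition of $d_n^{E(t)}$, an orthonormal system $\{u_1,\dots,u_n\}\subset L_2(\Omega,\mu)$ that satisfies Condition $E(t)$ and whose span $X_n$ nearly realizes the $E(t)$-conditioned width:
\[
\sup_{f\in\bF} d(f,X_n)_\infty \le d_n^{E(t)}(\bF,L_\infty)+\varepsilon.
\]
This reduces the problem to producing a single sampling set $\xi$ on which the unweighted least squares operator $LS(\xi,X_n)$ performs well uniformly over $\bF$.

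The next step, which is the heart of the argument, is to produce a set $\xi=\{\xi^1,\dots,\xi^m\}$ of size $m\le bn$, with $b=b(t)$, that discretizes the $L_2$-norm on $X_n$ with equal weights $\bw_m=(1/m,\dots,1/m)$:
\[
\tfrac{1}{2}\|u\|_2^2\le \tfrac{1}{m}\sum_{j=1}^m|u(\xi^j)|^2\le \tfrac{3}{2}\|u\|_2^2\qquad\text{for all }u\in X_n.
\]
The bound $\sum_{i=1}^n|u_i(\bx)|^2\le nt^2$ coming from Condition $E(t)$ is precisely what drives this: it gives a uniform Christoffel-function bound, which, together with a random sampling construction combined with a matrix Chernoff/Kadison--Singer style concentration argument, yields unweighted one-sided (in fact two-sided) discretization with $m$ linear in $n$. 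This is the main obstacle, in that the quantitative relation $m\le b(t)n$ is the deepest ingredient; once it is in hand the rest is essentially bookkeeping.

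With this discretization in hand, condition \textbf{A1} of Theorem \ref{AT1} holds on $X_n$ with $p=2$ and $C_1=1/\sqrt{2}$, and condition \textbf{A2} holds trivially with $C_2=1$ since $\sum_\nu w_\nu=1$. Remark \ref{AR1} then gives, for every $f\in \C(\Omega)$,
\[
\|f-LS(\xi,X_n)(f)\|_2\le (2\sqrt{2}+1)\,d(f,X_n)_\infty.
\]
Taking the supremum over $f\in \bF$, using the choice of $X_n$, and letting $\varepsilon\to 0$, we conclude
\[
\varrho^{ls}_{bn}(\bF,L_2(\Omega,\mu))\le (2\sqrt{2}+1)\,d_n^{E(t)}(\bF,L_\infty),
\]
which yields the claim with $B=2\sqrt{2}+1$ and $b=b(t)$ supplied by the discretization theorem. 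The two places where one must be careful are: (i) verifying that the Marcinkiewicz-type discretization result genuinely produces equal weights (rather than a general positive weight vector), so that $LS(\xi,X_n)$ with the uniform weights $\bw_m$ applies directly; and (ii) tracking the dependence of $b$ on $t$, since the constant in the concentration step blows up as $t$ grows.
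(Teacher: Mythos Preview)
The paper does not actually prove this theorem: it is stated in Section~\ref{D} with the attribution ``The following theorem is from \cite{VT183}'' and no proof is given. So there is no in-paper argument to compare against.

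That said, your outline is the natural one and matches the strategy of \cite{VT183}: pick a near-optimal $E(t)$-conditioned subspace $X_n$, invoke a sampling discretization theorem to get equal-weight Marcinkiewicz inequalities on $X_n$ with $m\le b(t)\,n$ points, and then apply Remark~\ref{AR1} (the $L_\infty$ version of Theorem~\ref{AT1}) to bound $\|f-LS(\xi,X_n)(f)\|_2$ by $(2C_1^{-1}+1)\,d(f,X_n)_\infty$. The paper itself points to the relevant discretization ingredient a few paragraphs earlier: Condition $E(t)$ is exactly the Nikol'skii-type bound \eqref{NI} with $p=2$ and $K_1=t^2$, and it is noted that \cite{LimT} yields the two-sided equal-weight discretization {\bf D1}$(M,2)$ with $M\le C(t)\,n$. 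One small inaccuracy in your write-up: the equal-weight discretization with $m$ linear in $n$ in \cite{LimT} is obtained via the Marcus--Spielman--Srivastava solution of Kadison--Singer (through the Nitzan--Olevskii--Ulanovskii construction), not via a random matrix Chernoff bound; the latter would typically produce an extra logarithmic factor in $m$. This does not affect the logical structure of your argument, only the attribution of the black box you invoke.
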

 
We pointed out in the Introduction that recently an outstanding progress has been done in the sampling recovery in the $L_2$ norm. We give a very brief comment on those interesting results. For special sets $\bF$ (in the reproducing kernel Hilbert space setting) the following inequality was proved (see \cite{DKU}, \cite{NSU}, \cite{KU}, and \cite{KU2}):
\be\label{R2}
\ro_{cn}(\bF,L_2) \le \left(\frac{1}{n}\sum_{k\ge n} d_k (\bF, L_2)^2\right)^{1/2}
\ee
with an absolute constant $c>0$.  

We refer the reader for results on the sampling recovery in the uniform norm to the   papers \cite{PU} and \cite{KKT}.  
The reader can find a   discussion of these results in \cite{KKLT}, Section 2.5.

\section*{Acknowledgement}
The authors would like to thank the referees for  careful reading of the paper and for helpful  suggestions and comments.

  \Addresses

\end{document}